\numberwithin{equation}{section}
\newcommand{\ud}{\,\mathrm{d}}
\newcommand{\1}{\mathbbm{1}}
\newcommand{\sX}{\mathcal{X}}
\newcommand{\x}{{\scriptstyle\sX}}
\newcommand{\sT}{\mathcal{T}}
\newcommand{\sS}{\mathcal{S}}
\renewcommand{\L}{\mathrm{L}}
\newcommand{\W}{\mathrm{W}}
\newcommand{\Schw}{\mathcal{S}}
\newcommand{\Xp}{\sX^p}
\newcommand{\WzC}{W^{2,p}_{0}\bigl([0,+\infty),U\bigr)}
\newcommand{\WzCY}{W^{2,p}_{0}\bigl([0,+\infty),Y\bigr)}
\newcommand{\A}{\mathcal{A}}
\newcommand{\sK}{\mathcal{K}}
\newcommand{\sP}{\mathcal{P}}
\newcommand{\sAl}{\mathcal{A}^{\lambda_0}}
\newcommand{\Al}{A-\lambda_0}
\newcommand{\sSt}{\mathcal{S}(t)}
\newcommand{\sStt}{\mathchoice{\bigl(\sSt\bigr)_{t\ge0}}{\bigl(\sSt\bigr)_{t\ge0}}{(\sSt)_{t\ge0}}{(\sSt)_{t\ge0}}}
\newcommand{\sTt}{\mathcal{T}(t)}
\newcommand{\sTtt}{\mathchoice{\bigl(\sTt\bigr)_{t\ge0}}{\bigl(\sTt\bigr)_{t\ge0}}{(\sTt)_{t\ge0}}{(\sTt)_{t\ge0}}}
\newcommand{\sA}{\mathcal{A}}
\newcommand{\sD}{\mathcal{D}}
\newcommand{\sM}{\mathcal{M}}
\newcommand{\B}{\mathcal{B}}
\newcommand{\sL}{\mathcal{L}}
\newcommand{\LT}{\sL}
\newcommand{\bF}{\mathbb{F}}
\newcommand{\bFt}{\tilde{\mathbb{F}}}
\newcommand{\E}{\mathcal{E}}
\newcommand{\Four}{\mathcal{F}}
\newcommand{\ad}{\bigl(A,D(A)\bigr)}
\newcommand{\duno}{\bigl(D_1,D(D_1)\bigr)}
\newcommand{\ddue}{\bigl(D_2,D(D_2)\bigr)}
\newcommand{\Tt}{\mathchoice{\bigl(T(t)\bigr)_{t\ge 0}}{\bigl(T(t)\bigr)_{t\ge 0}}{(T(t))_{t\ge 0}}{(T(t))_{t\ge 0}}}
\newcommand{\St}{(S(t))_{t\ge 0}}
\newcommand{\Sunot}{\mathchoice{\bigl(S_1(t)\bigr)_{t\ge 0}}{\bigl(S_1(t)\bigr)_{t\ge 0}}{(S_1(t))_{t\ge 0}}{(S_1(t))_{t\ge 0}}}
\newcommand{\Sduet}{\mathchoice{\bigl(S_2(t)\bigr)_{t\ge 0}}{\bigl(S_2(t)\bigr)_{t\ge 0}}{(S_2(t))_{t\ge 0}}{(S_2(t))_{t\ge 0}}}
\newcommand{\LpRmY}{\mathchoice{\L^p\bigl((-\infty,0],Y\bigr)}{\L^p\bigl((-\infty,0],Y\bigr)}{\L^p((-\infty,0],Y)}{\L^p((-\infty,0],Y}}
\newcommand{\LpRmX}{\mathchoice{\L^p\bigl((-\infty,0],X\bigr)}{\L^p\bigl((-\infty,0],X\bigr)}{\L^p((-\infty,0],X)}{\L^p((-\infty,0],X}}
\newcommand{\LpRpX}{\mathchoice{\L^p\bigl([0,+\infty),X\bigr)}{\L^p\bigl([0,+\infty),X\bigr)}{\L^p([0,+\infty),X)}{\L^p([0,+\infty),X}}
\newcommand{\LeRpX}{\mathchoice{\L^1\bigl([0,+\infty),X\bigr)}{\L^1\bigl([0,+\infty),X\bigr)}{\L^1([0,+\infty),X)}{\L^1([0,+\infty),X}}
\newcommand{\LeRp}{{\L^1[0,+\infty)}}
\newcommand{\LpRpU}{\mathchoice{\L^p\bigl([0,\infty),U\bigr)}{\L^p\bigl([0,\infty),U\bigr)}{\L^p([0,\infty),U)}{\L^p([0,\infty),U)}}
\newcommand{\LeRpU}{\mathchoice{\L^1\bigl([0,\infty),U\bigr)}{\L^1\bigl([0,\infty),U\bigr)}{\L^1([0,\infty),U)}{\L^1([0,\infty),U)}}
\newcommand{\LpRpY}{\mathchoice{\L^p\bigl([0,\infty),Y\bigr)}{\L^p\bigl([0,\infty),Y\bigr)}{\L^p([0,\infty),Y)}{\L^p([0,\infty),Y)}}
\newcommand{\WpY}{\W^{1,p}\bigl((-\infty,0],Y\bigr)}
\newcommand{\WpzY}{\W^{1,p}_0\bigl((-\infty,0],Y\bigr)}
\newcommand{\WpU}{\W^{1,p}\bigl([0,\infty),U\bigr)}
\newcommand{\WpX}{\W^{1,p}\bigl([0,\infty),X\bigr)}
\newcommand{\WzpU}{\W^{2,p}\bigl([0,\infty),U\bigr)}
\newcommand{\el}{\varepsilon_{\lambda}}
\newcommand{\rhoA}{\rho(A)}
\newcommand{\N}{\mathbb{N}}
\newcommand{\R}{\mathbb{R}}
\newcommand{\C}{\mathbb{C}}
\newcommand{\den}{w_n}
\newcommand{\Bt}{\mathcal{B}(t)}
\newcommand{\Btt}{\mathchoice{\bigl(\Bt\bigr)_{t\ge0}}{\bigl(\Bt\bigr)_{t\ge0}}{(\Bt)_{t\ge0}}{(\Bt)_{t\ge0}}}
\newcommand{\Ct}{\mathcal{C}(t)}
\newcommand{\Ctt}{\mathchoice{\bigl(\Ct\bigr)_{t\ge0}}{\bigl(\Ct\bigr)_{t\ge0}}{(\Ct)_{t\ge0}}{(\Ct)_{t\ge0}}}
\newcommand{\Ft}{\mathbb{F}(t)}
\newcommand{\Ftt}{\mathchoice{\bigl(\Ft\bigr)_{t\ge0}}{\bigl(\Ft\bigr)_{t\ge0}}{(\Ft)_{t\ge0}}{(\Ft)_{t\ge0}}}
\newcommand{\RlA}{R(\lambda,A)}
\newcommand{\RlsA}{R(\lambda,\A)}
\newcommand{\RlAme}{R(\lambda,A_{-1})}
\newcommand{\inc}{\overset{\text{c}\;}{\hookrightarrow}}
\newcommand{\rg}{\operatorname{rg}}
\renewcommand{\Re}{\operatorname{Re}}
\newcommand{\spn}{\operatorname{span}}
\newcommand{\cd}{{\raisebox{1.3pt}{{$\kern0.5pt\scriptscriptstyle\bullet\kern0.5pt$}}}}
\newtheorem{definition}{Definition}[section]
\newtheorem{lemma}[definition]{Lemma}
\newtheorem{proposition}[definition]{Proposition}
\newtheorem{theorem}[definition]{Theorem}
\newtheorem{corollary}[definition]{Corollary}
\theoremstyle{remark}
\newtheorem{remark}[definition]{Remark}
\newtheorem{assumptions}[definition]{Assumption}
\title[A Semigroup Characterization of Well-Posed Linear Control Systems]{A Semigroup Characterization of\\ Well-Posed Linear Control Systems}
\author{M. Bombieri, K.-J. Engel}
\date{\today}
\begin{document}
\subjclass[2010]{93C05, 47D03, 35K05}
\keywords{Well-posed linear systems, admissible control operator, admissible output operator, Laplace transform, Fourier multiplier, Lax--Phillips semigroup.}
\thanks{The authors would like to thank A.~B\'atkai for many valuable discussions.}
%%%%%%%%%%%%%%%%%%%%%%%%%%%%%%%%%%%%%%%%%%

\begin{abstract}
We study the well-posedness of a linear control system \ref{csu} with unbounded control and observation operators. To this end we associate to our system an operator matrix $\sA$ on a product space $\Xp$ and call it $p$-well-posed if $\sA$ generates a strongly continuous semigroup on $\Xp$.
Our approach is based on the Laplace transform and Fourier multipliers. The results generalize and complement those of \cite{CW:89}, \cite{tesi2} and are illustrated by a heat equation with boundary control and point observation.
\end{abstract}
\maketitle

%%%%%%%%%%%%%%%%%%%%%%%%%%%%%%%%%%%%%%%%%%%%%%%%%%%%%%

%%%%%%%%%%%%%%%%%%%%%%%%%%%%%%%%%%%%%%%%%%%%%%%%%%%%%%
\section{Introduction}
In this paper we investigate the well-posedness of linear control systems of the form
\begin{equation*}
\tag*{$\Sigma(A,B,C,D)$}
\label{csu}
\begin{cases}
\dot{x}(t)=Ax(t)+Bu(t),& t\ge0, \\
y(t)=Cx(t)+Du(t),& t\ge0, \\
x(0)=x_0.
\end{cases}
\end{equation*}
The operators $A, B,C ,D$ are linear and defined on Banach spaces $X, Y$ and $U$, called \emph{state}-, \emph{observation}- and \emph{control space}, respectively, and satisfy the following hypotheses:\footnote{For the definition of the inter- and extrapolation spaces $X_1$, $X_{-1}$ see below.}
\begin{itemize}
	\item $A:D(A)\subset X \to X$, called the \emph{state operator}, is the generator of a $C_0$-semigroup,
	\item $B \in \sL(U,X_{-1})$ is the \emph{control operator},
	\item $C \in \sL(X_1, Y)$ is the \emph{observation operator},
	\item $D \in \sL(U,Y)$ is the \emph{feedthrough operator}.
\end{itemize}
For the motivation, concrete examples and a systematic treatment of such systems we refer to \cite{CZ95}, \cite{Hadd1}, \cite{Hadd2}, \cite{nuovoweiss}, \cite{Weiss} and the references therein. Moreover, in Section~\ref{example} we illustrate our results considering a heat equation with boundary control and point observation.

Generalizing an idea of Grabowski and Callier \cite{GC}, see also Engel \cite{adm},  we associate to our system an operator matrix $\bigl(\A, D(\A)\bigr)$ defined on an appropriate product space $\Xp$ depending on $p\ge1$. We then call \ref{csu} \emph{$p$-well posed} if this operator matrix generates a $C_0$-semigroup $\sT=\sTtt$ on $\Xp$.

In other words,  \ref{csu} is well-posed if the Cauchy problem
\begin{equation}\label{ACP-A}
\begin{cases}
\dot{\x}(t) = \A \x(t), & t\ge 0,\\
\x(0) = \x_0 
\end{cases}
\end{equation}
is well-posed on $\Xp$ in the sense of Hadamard (see \cite[Sect.~II.6]{EN}).

It turns out that this definition of well-posedness leads to the concept of $p$-admissibility of the control operator $B$ and the observation operator $C$ as studied, e.g., by Staffans and Weiss, see \cite{lebext}, \cite{W-Ad-contr}, \cite{Weiss}, \cite{tesi1}, \cite{tesi2}.

We mention that the semigroup $\sT$ generated by $\A$ already appears in \cite{tesi1}, \cite{tesi2}, \cite{Sta:05} where it is called the ``\emph{Lax-Phillips semigroup}".

To carry out the program sketched above we start from the generator $\ad$ of a semigroup $\Tt$ on a Banach space $(X,\|\cd\|)$. 
We then consider the associated abstract Sobolev spaces (see \cite[Sect.~II.5]{EN}) defined by
\begin{itemize}
	\item $X_1 := \bigl(D(A), \|\cd\|_A\bigr)$, where $\|\cd\|_A$ is the graph norm given by $\|x\|_A:=\|x\|+\|Ax\|$,
	\item $X_{-1} := \bigl(X, \|\cd\|_{-1}\bigr)\;\widetilde{}$, where $\|x\|_{-1}:= \|\RlA x\|$ for $x\in X$ and some fixed $\lambda\in\rhoA$.
\end{itemize}
Then $\Tt$ uniquely extends to the extrapolated semigroup $\bigl(T_{-1} (t)\bigr)_{t \ge 0}\subset\sL(X_{-1})$ with generator $\bigl(A_{-1}, D(A_{-1})\bigr)$ where $D(A_{-1})=X$.

For the observation operator $C$ we define as in \cite[Sect.~4]{lebext} its \emph{Lebesgue extension} $C_{L} : D ( C_L ) \subset X \to Y$ by
\begin{align*}
D (C_L) &:= \biggl\{x \in X :\lim\limits_{t\searrow0}C\,\frac{1}{t}\int_0^t T(s)x\ud s\text{ exists}\biggr\}, \\
C_L x &:=  \lim\limits_{t\searrow0}C\,\frac{1}{t}\int_0^t T(s)x\ud s\quad \text{for all }x \in D (C_L).
\end{align*}

Now the following holds, see \cite[Prop. 4.3]{lebext}.

\begin{proposition}\label{D(C_L)-Ban}
The space $D(C_L)$ endowed with the norm
\[
\|x\|_{L}:= \|x\|_X+\sup_{t\in (0,1]}\biggl\|C\frac{1}{t}\int_0^t T(s)x \ud s\biggr\|_X, \quad x\in D(C_L)
\]
is a Banach space. Moreover, the embeddings $X_1\inc D(C_L)\inc X$ are continuous and $C_L\in\sL\bigl(D(C_L),Y\bigr)$.
\end{proposition}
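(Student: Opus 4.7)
The statement bundles three claims: (i) the continuous embeddings, (ii) boundedness of $C_L$, and (iii) completeness of $\bigl(D(C_L),\|\cd\|_L\bigr)$. Claims (i) and (ii) are essentially bookkeeping, while (iii) is the real content and will rely on the elementary but crucial identity $A\int_0^t T(s)x\,\mathrm{d}s=T(t)x-x$ valid for all $x\in X$.

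\textbf{Step 1 (Embeddings and boundedness of $C_L$).} The inclusion $D(C_L)\hookrightarrow X$ and the estimate $\|C_L x\|_Y\le\|x\|_L$ are immediate from the definitions, since $\|C_L x\|_Y$ is the limit of expressions bounded by the supremum entering $\|x\|_L$. For $X_1\hookrightarrow D(C_L)$, take $x\in D(A)$; then $s\mapsto T(s)x$ is continuous into $X_1$, so $\frac{1}{t}\int_0^t T(s)x\,\mathrm{d}s\to x$ in $X_1$ as $t\searrow 0$, and applying $C\in\sL(X_1,Y)$ yields $x\in D(C_L)$ with $C_Lx=Cx$. A uniform bound $\sup_{s\in[0,1]}\|T(s)\|_{\sL(X_1)}\le M$ together with $C\in\sL(X_1,Y)$ gives $\|x\|_L\le(1+M\|C\|_{\sL(X_1,Y)})\|x\|_{X_1}$.

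\textbf{Step 2 (Completeness, the main step).} Let $(x_n)\subset D(C_L)$ be $\|\cd\|_L$-Cauchy. Then $(x_n)$ is Cauchy in $X$, hence $x_n\to x$ in $X$ for some $x\in X$, and the functions $f_n(t):=C\frac{1}{t}\int_0^t T(s)x_n\,\mathrm{d}s$ form a Cauchy sequence in $\Runo\bigl((0,1],Y\bigr)$ under the sup-norm, so they converge uniformly to some $f\colon(0,1]\to Y$. The key identification is $f(t)=C\frac{1}{t}\int_0^t T(s)x\,\mathrm{d}s$. To see this, fix $t\in(0,1]$ and observe that $\frac{1}{t}\int_0^t T(s)x_n\,\mathrm{d}s\in D(A)$ with graph norm
\[
\Bigl\|\tfrac{1}{t}\!\!\int_0^t T(s)x_n\,\mathrm{d}s\Bigr\|_{X_1}=\Bigl\|\tfrac{1}{t}\!\!\int_0^t T(s)x_n\,\mathrm{d}s\Bigr\|_X+\Bigl\|\tfrac{1}{t}\bigl(T(t)x_n-x_n\bigr)\Bigr\|_X,
\]
which converges as $n\to\infty$ to the analogous expression for $x$. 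Hence $\frac{1}{t}\int_0^t T(s)x_n\,\mathrm{d}s\to\frac{1}{t}\int_0^t T(s)x\,\mathrm{d}s$ in $X_1$, and the continuity of $C\in\sL(X_1,Y)$ gives the claimed form of $f$.

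\textbf{Step 3 (Membership in $D(C_L)$ and convergence).} The Cauchy condition passed to $t\searrow0$ shows $(C_L x_n)$ is Cauchy in $Y$; let $y$ denote its limit. A standard $\varepsilon/3$-argument using uniform convergence $f_n\to f$ on $(0,1]$ together with $\lim_{t\searrow0}f_n(t)=C_L x_n$ and $C_L x_n\to y$ yields $\lim_{t\searrow0}f(t)=y$. Thus $x\in D(C_L)$ with $C_L x=y$, and by construction $\|x_n-x\|_X\to 0$ and $\sup_{t\in(0,1]}\|f_n(t)-f(t)\|_Y\to 0$, i.e.\ $x_n\to x$ in $\|\cd\|_L$.

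\textbf{Main obstacle.} The only nontrivial point is passing the limit through the unbounded operator $C$ in Step~2: $C$ is not defined on all of $X$, so one cannot use continuity directly. The identity $A\int_0^t T(s)\cd\,\mathrm{d}s=T(t)\cd-\mathrm{id}$ circumvents this by lifting convergence from $X$ to $X_1$ after integration, which is exactly the regularizing effect that makes the definition of $C_L$ work in the first place.
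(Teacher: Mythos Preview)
Your argument is correct. The paper itself does not prove this proposition; it simply quotes it from Weiss \cite[Prop.~4.3]{lebext}, so there is no in-paper proof to compare against. Your self-contained argument follows the natural route (and, in fact, is essentially the one Weiss gives): the regularizing identity $A\int_0^t T(s)x\,\mathrm{d}s=T(t)x-x$ upgrades $X$-convergence of $(x_n)$ to $X_1$-convergence of the averaged orbits, which is precisely what is needed to identify the uniform limit $f$ and close the completeness argument. One cosmetic point: the displayed norm in the statement carries a typo ($\|\cdot\|_X$ in the second summand should be $\|\cdot\|_Y$), which you silently corrected.
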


To proceed we need the following stability and compatibility conditions. The latter relates the operators $A$, $B$ and $C$,  cf. \cite[Sect.~II.A]{Hel:76}. For more information and several equivalent conditions see \cite[Thm.5.8]{tesi1}.

\begin{assumptions}\label{assum-reg}
If not stated otherwise, in the sequel we always make the following hypotheses.
\begin{enumerate}
\item[(i)] The semigroup $\Tt$ is uniformly exponentially stable, i.e., there exist $K\ge1$ and $\omega<0$ such that
\begin{equation}\label{eq:T(t)-stab}
\bigl\|T(t)\bigr\|\le Ke^{\omega t}\quad\text{for all }t\ge0.
\end{equation}
\item[(ii)] The system \ref{csu} is \emph{compatible} (or \emph{regular}), i.e., for some $\lambda\in\rho(A)$ we have
\begin{equation}\label{bild}
\rg\bigl(\RlAme B\bigr)\subset D(C_L).
\end{equation}
\end{enumerate}
\end{assumptions}

While assumption~(i) is made only for convenience and to simplify the presentation (cf. also Remark~\ref{rem:not-stable}), assumption~(ii) is essential and cannot be omitted.
Note that if the inclusion \eqref{bild} holds for some $\lambda\in\rho(A)$ then by the resolvent equation it holds for all $\lambda\in\rho(A)$. Moreover, the closed graph theorem and Proposition~\ref{D(C_L)-Ban} then imply that
\begin{equation}\label{eq:Delta-bdd}
 C_L R(\lambda, A_{-1}) B \in \sL(U, Y)\quad\text{for all }\lambda \in \rho(A).
\end{equation}

We close this introduction with a brief outline of this work. In Section~\ref{admoo} we introduce the operator matrix $\sA$ from \eqref{ACP-A} on the space $\Xp$ and compute its resolvent $\RlsA$. In Section~\ref{sec:char-well} we show how the concept of admissibility for the observation operator $C$, the control operator $B$ and the pair $(B,C)$  is related to the existence of strongly continuous operator families having as Laplace transforms the entries of $\RlsA$. Section~\ref{sec:FM} is dedicated to the characterization of admissible pairs in terms of a resolvent condition which leads to so-called Fourier multipliers. In Section~\ref{A-csu} we summarize our results from the previous section and give several characterizations of the generator property of $\sA$, i.e., of the well-posedness of \ref{csu}. In the final Section~\ref{example} we illustrate our results and show the well-posedness of a controlled heat equation.

%%%%%%%%%%%%%%%%%%%%%%%%%%%%%%%%%%%%%%%%%%%%%%%%%%%%%%%%%%%%%%%%%%%%%%%%%%%%%%%

\section{The Operator Matrix $\A$}
\label{admoo}

In this section we define the operator matrix $\A$ appearing in \eqref{ACP-A} which governs the control system $\Sigma(A,B,C,D)$. To this end we first fix some $1\leq p < \infty$. Then we introduce
\begin{itemize}
\item the space $E_1^p:=\LpRmY$ of \emph{possible observations},
\item the space $E_2^p := \LpRpU$ of \emph{possible controls}, and
\item the \emph{extended state space} $\Xp=E_1^p\times X\times E_2^p$.
\end{itemize}
On $\Xp$ (equipped with an arbitrary product norm) we define the operator matrix
\begin{align}\label{matA}
\A:&=
\begin{pmatrix}
\frac{\ud}{\ud s} & 0 & 0 \\
0 & A_{-1}   & B\delta_0 \\
0 & 0   & \frac{\ud}{\ud s}
\end{pmatrix},\\
\label{matA-D}
D(\A):&=\left\{\left(
\begin{smallmatrix}
y\\
x\\
u
\end{smallmatrix}\right)
\in\E: A_{-1}x+Bu(0)\in X,\; y(0)=C_Lx+Du(0)
\right\},
\end{align}
where $\delta_0: \WpU\subset E_2^p\to U$ denotes the point evaluation given by $\delta_0u:=u(0)$ and
\[
\E:= \WpY\times D(C_L)\times  \WpU.
\]
Note that there is a close relation between this operator matrix and the system \ref{csu}. In fact, on the second row of \eqref{matA} we can recognize the first equation of the system \ref{csu}, while in the definition \eqref{matA-D} of the domain of $\A$ the output equation of \ref{csu} appears as a boundary condition. In Section~\ref{A-csu} we will return to the relation between the matrix $\A$ and the system \ref{csu}.

As already mentioned in the introduction we define the well-posedness of \ref{csu} in terms of the operator matrix $\A$.

\begin{definition}
The system \ref{csu} is called \emph{$p$-well-posed} if the operator matrix $\A$ in \eqref{matA}, \eqref{matA-D} generates a $C_0$-semigroup on $\Xp$.
\end{definition}

In order to characterize the generator property of $\A$ in terms of its entries, we follow ideas developed in \cite{ST} for $2\times 2$-matrices. To do so we introduce some more notation.

First we consider the operators
\begin{itemize}
\item $D_1:=\frac{\ud}{\ud s}:D(D_1)\subset E_1^p\to E_1^p$ with domain $D(D_1):=\WpzY:=\bigl\{y\in \WpY:y(0)=0\bigr\}$,
\item $D_2 := \frac{\ud}{\ud s}:D(D_2)\subset E_2^p\to E_2^p$ with domain $D(D_2) := \WpU$.
\end{itemize}
Note that $\duno$ generates the left shift semigroup
$\sS_1=\Sunot$ on $E_1^p$ given by
\[
\bigl(S_1(t)y\bigr)(s):=
\begin{cases}
y(t+s)&\text{if }t+s\le0,\\
0&\text{if }t+s>0,
\end{cases}
\]
while $\ddue$ generates the left shift semigroup $\sS_2=\Sduet$ on $E_2^p$, see \cite[Sect.~II.2.b]{EN} for more details.

Next, for $\lambda \in \C$ with $\Re\lambda>0$ we consider $\el\in\L^p\bigl(-\infty,0]$ defined by $\el(s):=e^{\lambda s}$. Then for an operator $Q:D(Q)\subset V\to Y$ (where $V=X$ or $V=E_2^p$) we define
\[
\el\otimes Q:D(Q)\subset V\to\LpRmY,\quad
\bigl((\el\otimes Q)x\bigr)(s):=\el(s)\cdot Qx=e^{\lambda s}\cdot Qx.
\]

We are now able to represent the matrix $\lambda-\A$ as follows.

\begin{proposition}
Let $\Re\lambda>0$. Then
\begin{align}\label{factorize}
\lambda-\A&=
\begin{pmatrix}
\lambda-D_1&0&0\\
0&\lambda-A&0\\
0&0&\lambda-D_2
\end{pmatrix}
\cdot
\begin{pmatrix}
Id&-\el\otimes C_L&-\el\otimes D\delta_0\\
0&Id&-\RlAme B\delta_0\\
0&0&Id
\end{pmatrix}
\\\notag
&=:\A_\lambda\cdot(Id-\sK),
\end{align}
where $D(\A_\lambda) = D(D_1) \times D(A) \times D(D_2)$ and $D(\sK)=E_1^p\times D(C_L)\times  \WpU$.
\end{proposition}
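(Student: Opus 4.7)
The identity is a pure factorization, so I would split the verification into two parts: the algebraic equality of the two products row-by-row, and the matching of the natural composition domain with $D(\A)$. Write $\sK$ for the strictly upper triangular part of the second matrix, so the right-hand side reads $\A_\lambda\cdot(\mathrm{Id}-\sK)$.

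For the algebraic part, I would take a generic $\left(\begin{smallmatrix}y\\x\\u\end{smallmatrix}\right)\in D(\A)$ and compute
\[
\left(\begin{smallmatrix}\tilde y\\ \tilde x\\ \tilde u\end{smallmatrix}\right):=(\mathrm{Id}-\sK)\left(\begin{smallmatrix}y\\x\\u\end{smallmatrix}\right)=\left(\begin{smallmatrix}y-\el\otimes(C_Lx+Du(0))\\ x-\RlAme Bu(0)\\ u\end{smallmatrix}\right).
\]
Applying $\A_\lambda$ and invoking the two key identities $(\lambda-\tfrac{\ud}{\ud s})\el\equiv0$ pointwise and $(\lambda-A_{-1})\RlAme Bu(0)=Bu(0)$ in $X_{-1}$, the corrector terms cancel and I should recover $(\lambda-D_1)y$, $(\lambda-A_{-1})x-Bu(0)$ and $(\lambda-D_2)u$, which are exactly the three components of $(\lambda-\A)\left(\begin{smallmatrix}y\\x\\u\end{smallmatrix}\right)$. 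The one subtlety is in the middle row, where $\A_\lambda$ carries $\lambda-A$ rather than $\lambda-A_{-1}$; this is legitimate because on $D(\A)$ the expression $\lambda x-A_{-1}x-Bu(0)$ already lies in $X$, so $\tilde x\in D(A)$ and the restriction acts as the extrapolation.

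For the domain matching, I need to show that $\left(\begin{smallmatrix}y\\x\\u\end{smallmatrix}\right)$ belongs to the composition domain $\bigl\{z\in D(\sK):(\mathrm{Id}-\sK)z\in D(\A_\lambda)\bigr\}$ if and only if it belongs to $D(\A)$. The condition $\tilde u\in D(D_2)$ is merely $u\in\WpU$; the identity $A_{-1}\tilde x=(A_{-1}x+Bu(0))-\lambda\RlAme Bu(0)$ shows that $\tilde x\in D(A)$ if and only if $A_{-1}x+Bu(0)\in X$; and since $\el\otimes c\in\WpY$ with trace $c$ at $0$, the condition $\tilde y\in D(D_1)=\WpzY$ reduces to $y\in\WpY$ together with the boundary relation $y(0)=C_Lx+Du(0)$. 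Combined with $x\in D(C_L)$ (already required by $D(\sK)$), these are precisely the defining conditions of $D(\A)$.

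The main work is bookkeeping, and the only genuinely delicate step is the middle-row domain computation. The resolvent factor $\RlAme$ inside $\sK$ is chosen precisely so that subtracting it converts the $A_{-1}$-type constraint $A_{-1}x+Bu(0)\in X$ from $D(\A)$ into the plain $A$-type constraint $\tilde x\in D(A)$ from $D(\A_\lambda)$; and the exponential corrector $\el\otimes(\,\cdot\,)$ in the first column is chosen so that the inhomogeneous trace $y(0)=C_Lx+Du(0)$ becomes the homogeneous trace $\tilde y(0)=0$. Once both of these substitutions are in place, the rest of the identity is forced.
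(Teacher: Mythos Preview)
Your proposal is correct and follows exactly the approach the paper takes; the paper simply condenses everything into the single sentence ``a simple computation shows that $D\bigl(\A_\lambda(\mathrm{Id}-\sK)\bigr)$ coincides with $D(\A)$ and that $(\lambda-\A)\x=\A_\lambda(\mathrm{Id}-\sK)\x$ for all $\x\in D(\A)$'', whereas you spell out both the row-by-row algebraic identity and the domain matching. The two ``delicate'' points you single out---that the corrector $\RlAme Bu(0)$ turns the extrapolated constraint $A_{-1}x+Bu(0)\in X$ into $\tilde x\in D(A)$, and that the exponential corrector absorbs the inhomogeneous trace---are precisely the content behind the paper's ``simple computation''.
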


\begin{proof}
A simple computation shows that $D\bigl(\A_\lambda(Id-\sK)\bigr):=\bigl\{\x\in D(\sK):(Id-\sK)\x\in D(\A_\lambda)\bigr\}$ coincides with $D(\A)$ and that $(\lambda-\A)\x=\A_\lambda(Id-\sK)\x$ for all $\x\in D(\A)$.
\end{proof}

Using the above representation of $\lambda-\A$ it is easy to find an explicit representation for the resolvent $\RlsA$ of $\A$. To this end we denote by $\LT$ the Laplace transform, i.e., for $\Re\lambda>0$ and $u\in E_2^p=\LpRpU$ we define
\[
(\LT u)(\lambda):=\LT_{\lambda} u:=\hat u(\lambda):=\int_0^{+\infty}e^{-\lambda r}u(r)\ud r.
\]

\begin{corollary}\label{res-sA} For $\lambda \in \C$ with $\Re\lambda>0$ we have $\lambda\in\rho(\A)$ and
\begin{equation}\label{resolv}
R(\lambda,\A)=
\begin{pmatrix}
R(\lambda,D_1)&\el\otimes C\RlA&\el\otimes C_L\RlAme B\LT_{\lambda}+\el\otimes D\LT_{\lambda}\\
0&\RlA&\RlAme B\LT_{\lambda}\\
0&0&R(\lambda,D_2)
\end{pmatrix}.
\end{equation}
\end{corollary}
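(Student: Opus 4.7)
The plan is to exploit the factorization $\lambda - \A = \A_\lambda\,(Id - \sK)$ just established and invert each factor separately, then multiply.

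First I would check that $\A_\lambda$ is invertible on $\Xp$ for $\Re\lambda > 0$. Because of Assumption~\ref{assum-reg}(i) the semigroup $\Tt$ is uniformly exponentially stable, so $\{\Re \lambda > 0\} \subset \rho(A)$. For the shift generators $\duno$ and $\ddue$ the left shift semigroups $\sS_1, \sS_2$ are contractive on $E_1^p, E_2^p$, hence their spectra lie in the closed left half-plane and again $\lambda \in \rho(D_1) \cap \rho(D_2)$. Thus $\A_\lambda$ is boundedly invertible with inverse $\operatorname{diag}\bigl(R(\lambda,D_1), R(\lambda,A), R(\lambda,D_2)\bigr)$. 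An important side computation is that $\delta_0 R(\lambda, D_2) = \LT_\lambda$: this is the identity $\bigl(R(\lambda,D_2)u\bigr)(0) = \int_0^\infty e^{-\lambda r} u(r)\,\mathrm{d}r$ which follows from an explicit formula for the resolvent of $\ud/\ud s$ on $\LpRpU$.

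Next I would invert $Id - \sK$. The operator $\sK$ is strictly upper triangular, and a direct computation shows that on its domain $D(\sK) = E_1^p \times D(C_L) \times \WpU$ one has $\sK^3 = 0$; indeed, the only surviving entry of $\sK^2$ is in position $(1,3)$ and equals $\el \otimes C_L R(\lambda, A_{-1}) B\, \delta_0$, which is a well-defined bounded operator thanks to Assumption~\ref{assum-reg}(ii) and \eqref{eq:Delta-bdd}. Consequently, the Neumann series terminates and $(Id - \sK)^{-1} = Id + \sK + \sK^2$ on $D(\sK)$. One needs to verify that $\A_\lambda^{-1}$ actually maps $\Xp$ into $D(\sK)$; this is immediate since $R(\lambda, A): X \to D(A) \inc D(C_L)$ by Proposition~\ref{D(C_L)-Ban} and $R(\lambda, D_2): E_2^p \to D(D_2) = \WpU$.

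Combining these two steps, $\lambda \in \rho(\A)$ with $R(\lambda, \A) = (Id - \sK)^{-1} \A_\lambda^{-1} = (Id + \sK + \sK^2)\,\A_\lambda^{-1}$. Multiplying out entrywise yields \eqref{resolv}: the diagonal entries give $R(\lambda,D_1), R(\lambda,A), R(\lambda,D_2)$; the $(1,2)$ entry becomes $\el \otimes C_L R(\lambda,A) = \el \otimes C R(\lambda,A)$ since $R(\lambda,A)X \subset D(A)$ and $C_L$ agrees with $C$ on $D(A)$; the $(2,3)$ entry becomes $R(\lambda, A_{-1}) B \, \delta_0 R(\lambda, D_2) = R(\lambda, A_{-1}) B\, \LT_\lambda$; and the $(1,3)$ entry receives contributions from both $\sK$ and $\sK^2$, yielding $\el \otimes D\, \LT_\lambda + \el \otimes C_L R(\lambda, A_{-1}) B\, \LT_\lambda$.

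The main conceptual obstacle is bookkeeping across the various domains: ensuring that at each stage of the composition $(Id + \sK + \sK^2)\A_\lambda^{-1}$ the intermediate values land in a subspace on which the next operator is defined (in particular, that the middle entry of $\sK\A_\lambda^{-1}\!\tbinom{y\ x\ u}{\ \ }$ lies in $D(C_L)$ so that $\sK^2$ makes sense). Once this is checked via the regularity condition \eqref{bild} and \eqref{eq:Delta-bdd}, the calculation reduces to bookkeeping and no further estimates are needed.
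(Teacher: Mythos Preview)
Your proposal is correct and follows essentially the same route as the paper: invert the factorization $\lambda-\A=\A_\lambda(Id-\sK)$ by noting $\lambda\in\rho(D_1)\cap\rho(A)\cap\rho(D_2)$, computing $(Id-\sK)^{-1}=Id+\sK+\sK^2$ from nilpotency, and then multiplying by $\A_\lambda^{-1}$ together with the identity $\delta_0R(\lambda,D_2)=\LT_\lambda$. Your explicit attention to the domain bookkeeping (that $\A_\lambda^{-1}$ lands in $D(\sK)$ and that $C_L=C$ on $D(A)$) is a welcome addition the paper leaves implicit.
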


\begin{proof} Note that $\Re\lambda>0$ implies $\lambda\in\rho(D_1)\cap\rho(A)\cap\rho(D_2)$.
Using \eqref{factorize} we then obtain
\begin{equation*}
R(\lambda,\A)=
\begin{pmatrix}
Id & \el\otimes C_L& \el\otimes C_L \RlAme B\delta_0 + \el\otimes D\delta_0\\
0 & Id & \RlAme B\delta_0\\
0 & 0 & Id
\end{pmatrix}
\cdot
\begin{pmatrix}
R(\lambda,D_1) & 0 & 0\\
0 & R(\lambda,A) & 0\\
0 & 0 & R(\lambda,D_2)
\end{pmatrix}.
\end{equation*}
Since
\begin{align*}
\delta_0R(\lambda,D_2)u
= \delta_0 \Bigl(e^{\lambda \cd}\int_\cd^{\infty}e^{-\lambda r}u(r)\ud r\Bigr) 
=\LT_{\lambda} u \qquad \text{for all }u \in \LpRpU
\end{align*}
this implies \eqref{resolv}.
\end{proof}

\section{Characterization of Admissibility in the Time Domain}
\label{sec:char-well}

In this section we study the possible entries of a semigroup generated by the operator matrix $\A$. As we will see this leads to the concept of admissibility for the observation operator $C$, the control operator $B$ and the pair $(B,C)$. Our approach is based on the Laplace transform which relates a semigroup to the resolvent of its generator. More precisely, we use the following result, see \cite[Thm.~3.1.7]{AB}.

\begin{lemma}\label{lem-char-ABHN} Let $\sStt\subset\sL(\sX)$ be an exponentially bounded and strongly continuous operator family on a Banach space $\sX$. Then the following assertions are equivalent.
\begin{enumerate}
\item[(a)] There exists an operator $\sD:D(\sD)\subset\sX\to\sX$ and some $\lambda_0\in\R$ such that $(\lambda_0,+\infty)\subset\rho(\sD)$ and
\[
\LT\bigl(\sS(\cd)\x\bigr)(\lambda)=R(\lambda,\sD)\x\quad
\text{for all }\lambda>\lambda_0\text{ and all }\x\in\sX.
\]
\item[(b)] $\sStt$ is a $C_0$-semigroup.
\end{enumerate}
Moreover, in this case $\sD$ coincides with the generator of $\sStt$.
\end{lemma}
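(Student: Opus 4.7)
The plan is to prove the two implications separately. The direction $(b)\Rightarrow(a)$ is the classical Hille formula: if $\sStt$ is a $C_0$-semigroup with generator $\sD'$ of growth bound $\omega_0$, then for every $\lambda>\omega_0$ one has $\lambda\in\rho(\sD')$ and $R(\lambda,\sD')\x=\int_0^\infty e^{-\lambda t}\sS(t)\x\ud t=\LT\bigl(\sS(\cd)\x\bigr)(\lambda)$, so taking $\sD:=\sD'$ and $\lambda_0:=\omega_0$ yields (a) and simultaneously identifies $\sD$ as the generator.

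For the converse $(a)\Rightarrow(b)$ the heart of the argument is the semigroup property. Fix $\x\in\sX$ and introduce the two jointly continuous, exponentially bounded maps $f(t,s):=\sS(t+s)\x$ and $g(t,s):=\sS(t)\sS(s)\x$ on $[0,\infty)^2$. I would compute both double Laplace transforms for $\lambda,\mu>\lambda_0$ with $\lambda\neq\mu$. For $f$, the substitution $u=t+s$ followed by Fubini gives
\[
\hat f(\lambda,\mu)=\frac{R(\mu,\sD)\x-R(\lambda,\sD)\x}{\lambda-\mu},
\]
which by the resolvent equation equals $R(\lambda,\sD)R(\mu,\sD)\x$. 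For $g$, the inner integral in $t$ yields $R(\lambda,\sD)\sS(s)\x$, and pulling the bounded operator $R(\lambda,\sD)$ out of the outer integral in $s$ gives the same value $R(\lambda,\sD)R(\mu,\sD)\x$. Iterated one-variable uniqueness of the Laplace transform, applied first in $\mu$ and then in $\lambda$, combined with joint continuity of $f-g$, then yields $\sS(t+s)\x=\sS(t)\sS(s)\x$ for every $t,s\ge 0$.

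Once the semigroup property is established, the identity $\sS(t)=\sS(t)\sS(0)$ gives upon Laplace-transforming in $t$ that $R(\lambda,\sD)\x=R(\lambda,\sD)\sS(0)\x$ for every $\x\in\sX$, and injectivity of the resolvent forces $\sS(0)=Id$; together with the standing strong continuity this shows that $\sStt$ is a $C_0$-semigroup, proving (b). The moreover clause then follows by applying the already-proved direction $(b)\Rightarrow(a)$ to $\sStt$ with its generator $\sD''$ and noting that $R(\lambda,\sD'')=R(\lambda,\sD)$ for all large $\lambda$, whence $\sD=\sD''$. The main obstacle is the double-Laplace computation for the semigroup property; everything else is a routine consequence of uniqueness of Laplace transforms and injectivity of the resolvent.
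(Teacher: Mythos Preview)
Your argument is correct and is essentially the standard proof: the direction $(b)\Rightarrow(a)$ is Hille's integral representation of the resolvent, and for $(a)\Rightarrow(b)$ the double-Laplace-transform computation combined with the resolvent identity and the uniqueness theorem yields the semigroup law, after which $\sS(0)=Id$ follows from injectivity of the resolvent.

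Note, however, that the paper does not give its own proof of this lemma: it simply quotes the result as \cite[Thm.~3.1.7]{AB}. Your proof is precisely the argument that appears there, so there is nothing to compare beyond observing that you have reproduced the cited proof rather than a new one.
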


Recall that in Corollary~\ref{res-sA} we already computed the resolvent of $\A$. The idea is now to define (at least on dense subspaces) operator families $\bigl(T_{jk}(t)\bigr)_{t\ge0}$ for $j,k=1,2,3$ such that their Laplace transforms coincide with\footnote{Here $[\sM]_{jk}$ indicates the $jk$-th entry $m_{jk}$ of the matrix $\sM=(m_{jk})_{3\times 3}$.} $\bigl[\RlsA\bigr]_{jk}$ (on these subspaces). Hence, if $\sA$ is the generator of a $C_0$-semigroup $\sTtt$ these operator families $\bigl(T_{jk}(t)\bigr)_{t\ge0}$ must have (by denseness unique) bounded, strongly continuous extensions. Indeed, by the uniqueness theorem for the Laplace transform (see \cite[Thm.~1.7.3]{AB}), they are the only possible entries of $\sTt$. On the other hand, if these operator families $\bigl(T_{jk}(t)\bigr)_{t\ge0}$ have bounded, strongly continuous extensions, then their Laplace transforms give the entries of the resolvent of $\RlsA$, hence by Lemma~\ref{lem-char-ABHN} the matrix $\sA$ is a generator.

This idea works without problems for all entries of $\RlsA$ and $\sTtt$ below and on the diagonal. More precisely if $\A$ is a generator then the generated semigroup has necessarily the form
\[
\sTt=
\begin{pmatrix}
S_1(t)&*&*\\
0&T(t)&*\\
0&0&S_2(t)
\end{pmatrix}.
\]
Therefore, we only have to consider the remaining three entries. This will be done in the following subsections.

\subsection{The Entry \boldmath{$T_{12}(t)$} and Admissible Observation Operators}
\label{T12}

For $t\ge 0$ we define the operators
\begin{align*}
&T_{12}(t):D(A)\subset X\to E_1^p,\\
\bigl(&T_{12}(t)x\bigr)(s):=\1_{[-t,0]} (s)CT(t+s)x,\quad s\in\R_-.
\end{align*}

We first verify some basic properties of this operator family.

\begin{lemma}\label{lem:t_12-welldef}
For every $x\in D(A)$ the function $T_{12}(\cd)x:\R_+\to E_1^p$ is well-defined, continuous and bounded.
\end{lemma}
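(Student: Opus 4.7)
The plan is to establish, for $x\in D(A)$, the three required properties in turn, using as key ingredients the boundedness of $C:X_1\to Y$, the fact that $\Tt$ restricts to a $C_0$-semigroup on $X_1=\bigl(D(A),\|\cd\|_A\bigr)$, and the uniform exponential stability from Assumption~\ref{assum-reg}(i). The central pointwise estimate is
\begin{equation*}
\bigl\|CT(r)x\bigr\|_Y\le \|C\|_{\sL(X_1,Y)}\,\bigl\|T(r)x\bigr\|_A=\|C\|_{\sL(X_1,Y)}\bigl(\|T(r)x\|+\|T(r)Ax\|\bigr)\le MKe^{\omega r}\bigl(\|x\|+\|Ax\|\bigr)
\end{equation*}
for $r\ge 0$, with $M:=\|C\|_{\sL(X_1,Y)}$ and $K,\omega$ as in \eqref{eq:T(t)-stab}. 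In particular, $s\mapsto CT(t+s)x$ is a continuous, bounded $Y$-valued function on $[-t,0]$, so $T_{12}(t)x$ is a well-defined element of $E_1^p=\LpRmY$ for every $t\ge 0$.

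For boundedness of $T_{12}(\cd)x:\R_+\to E_1^p$, I would simply substitute $r=t+s$:
\begin{equation*}
\bigl\|T_{12}(t)x\bigr\|_{E_1^p}^p=\int_{-t}^{0}\bigl\|CT(t+s)x\bigr\|_Y^p\ud s=\int_0^t\bigl\|CT(r)x\bigr\|_Y^p\ud r\le(MK)^p\bigl(\|x\|+\|Ax\|\bigr)^p\int_0^{\infty}e^{p\omega r}\ud r,
\end{equation*}
which is finite and, crucially, independent of $t\ge 0$ since $\omega<0$.

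For continuity at a fixed $t\ge 0$, take $t_n\to t$ and decompose
\begin{equation*}
T_{12}(t_n)x-T_{12}(t)x=\1_{[-\min(t,t_n),0]}\bigl(CT(t_n+\cd)x-CT(t+\cd)x\bigr)+\bigl(\1_{[-t_n,0]}-\1_{[-t,0]}\bigr)CT(t_n+\cd)x.
\end{equation*}
The first summand tends to $0$ in $E_1^p$ by dominated convergence: its integrand converges to $0$ pointwise because $r\mapsto T(r)x$ is continuous in the graph norm and $C\in\sL(X_1,Y)$, and for, say, $|t_n-t|\le 1$ it is dominated by the constant function $2MK(\|x\|+\|Ax\|)$ on the bounded interval $[-(t+1),0]$. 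The second summand is estimated directly using the pointwise bound: its $p$-norm is at most $(MK)^p(\|x\|+\|Ax\|)^p\cdot|t_n-t|$, which vanishes as $n\to\infty$.

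I do not anticipate a real obstacle here; the argument is essentially bookkeeping around the estimate $\|CT(r)x\|_Y\le MKe^{\omega r}(\|x\|+\|Ax\|)$. The only mildly delicate point is the need to invoke strong continuity of the semigroup on $X_1$ (rather than on $X$) in order to control the first summand above, but this is automatic from the fact that $\Tt$ commutes with $\RlA$ and hence restricts to a $C_0$-semigroup on $X_1$.
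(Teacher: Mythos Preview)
Your argument is sound in spirit and close to the paper's, but the continuity decomposition has a bookkeeping slip. When $t_n<t$ the identity
\[
T_{12}(t_n)x-T_{12}(t)x=\1_{[-\min(t,t_n),0]}\bigl(CT(t_n+\cd)x-CT(t+\cd)x\bigr)+\bigl(\1_{[-t_n,0]}-\1_{[-t,0]}\bigr)CT(t_n+\cd)x
\]
fails: the second summand then involves $CT(t_n+s)x$ for $s\in[-t,-t_n)$, i.e.\ for $t_n+s<0$, where the semigroup is not defined, and an algebraic check shows the two sides do not even formally agree. The fix is trivial---replace the tail term by $\1_{[-t_n,-t)}CT(t_n+\cd)x-\1_{[-t,-t_n)}CT(t+\cd)x$ (one of which is always zero)---after which your dominated-convergence argument and the crude measure bound $|t_n-t|$ go through verbatim.

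By way of comparison, the paper avoids both dominated convergence and any appeal to the restricted semigroup on $X_1$: it first factors $\bigl(T_{12}(t)x\bigr)(s)=CA^{-1}\cdot\1_{[-t,0]}(s)T(t+s)Ax$, reducing everything to the $X$-valued function $g(t)=\1_{[-t,0]}(\cd)T(t+\cd)z$ with $z=Ax$, and then handles continuity via the semigroup identity $T(t+s)-T(r+s)=T(r+s)\bigl(T(t-r)-Id\bigr)$, which gives the explicit bound $r^{1/p}K\bigl\|(T(t-r)-Id)z\bigr\|_X$ for the overlap term. Your route is more direct (no factorization trick), while the paper's yields slightly sharper estimates without invoking $C_0$-continuity on $X_1$; both are entirely elementary.
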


\begin{proof} Since $x\in D(A)$ we can write
\[
\bigl(T_{12}(t)x\bigr)(s)=CA^{-1}\cdot \1_{[-t,0]} (s) T(t+s) Ax,
\]
where $CA^{-1}\in\sL(X,Y)$.
Hence to prove the claims it suffices to consider the simpler function $g:\R_+\to \LpRmX$ defined by $g(t):=\1_{[-t,0]} (\cd) T(t+\cd) z$ where $z:=Ax\in X$. By assumption $\Tt$ is exponentially stable, thus we get
\begin{align*}
\bigl\|g(t)\bigr\|_{\LpRmX}^p
&=\int_{-t}^{0}\bigl\|T(t+s)z\bigr\|_X^p\ud s\\
&\le\tfrac{K}{\omega p}\bigl(e^{\omega pt}-1\bigr)\cdot\|z\|_X^p\\
&\le\tfrac{-K}{\omega p}\cdot\|z\|_X^p\qquad\text{for all }t\ge0.
\end{align*}
This proves that $g$ is well-defined and bounded. To show its continuity let $0\le r\le t$. Then
\begin{align*}
\|g(t)-g(r)\|_{\LpRmX}&=\bigl\|\1_{[-t,0]} (\cd)T(t+\cd)z-\1_{[-r,0]} (\cd)T(r+\cd)z\bigr\|_{\LpRmX}\\
&\le\biggl(\int_{-t}^{-r}\bigl\|T(t+s)z\bigr\|_X^p\ud s\biggr)^{\frac1p}
+\biggl(\int_{-r}^{0}\bigl\|\bigl(T(t+s)-T(r+s)\bigr)z\bigr\|_X^p\ud s\biggr)^{\frac1p}
\\
&\le (t-r)^{\frac1p}K\cdot\|z\|_X+r^{\frac1p}K\bigl\|\bigl(T(t-r)-Id\bigr)z\bigr\|_X
\to0\quad\text{as $t-r\to0$},
\end{align*}
where again we used Assumption~\ref{assum-reg}.(i).
\end{proof}

By the previous result we can Laplace transform $T_{12}(\cd)$.

\begin{lemma}\label{laplC}
For every $x\in D(A)$ and $\lambda \in \C$ with $\Re \lambda > 0$ we have
\[
\LT\bigl(T_{12}(\cd)x\bigr)(\lambda)=\el\otimes C\RlA x=[\RlsA]_{12}x.
\]
\end{lemma}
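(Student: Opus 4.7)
The plan is to Laplace-transform $T_{12}(\cdot)x$ pointwise in $s$ via Fubini, and match the result with the $(1,2)$-entry of $\RlsA$ from Corollary~\ref{res-sA}.

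First I would verify that the Bochner integral $\LT(T_{12}(\cd)x)(\lambda)=\int_0^\infty e^{-\lambda t}T_{12}(t)x\ud t$ exists as an element of $E_1^p=\LpRmY$. This is immediate from Lemma~\ref{lem:t_12-welldef}: the map $t\mapsto T_{12}(t)x$ is continuous and bounded from $\R_+$ to $E_1^p$, and $|e^{-\lambda t}|=e^{-\Re\lambda\cd t}$ is integrable on $\R_+$ because $\Re\lambda>0$.

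Next, I would identify this Bochner integral as an actual element of $\LpRmY$ and then evaluate it pointwise. The scalar-valued double integrand $(t,s)\mapsto \1_{[-t,0]}(s)\,e^{-\lambda t}\,\|CT(t+s)x\|_Y$ on $\R_+\times(-\infty,0]$ is absolutely integrable: rewriting $CT(t+s)x=CA^{-1}\cdot T(t+s)Ax$ with $CA^{-1}\in\sL(X,Y)$ and using the exponential stability from Assumption~\ref{assum-reg}(i) one readily bounds the integral by a constant times $\int_0^\infty e^{-\Re\lambda\cd t}\ud t<\infty$. By (the vector-valued) Fubini theorem the Bochner integral is represented, for a.e.\ $s\le0$, by the pointwise integral
\[
\bigl(\LT(T_{12}(\cd)x)(\lambda)\bigr)(s)=\int_0^\infty e^{-\lambda t}\,\1_{[-t,0]}(s)\,CT(t+s)x\ud t=\int_{-s}^{\infty}e^{-\lambda t}\,CT(t+s)x\ud t.
\]
The substitution $r=t+s$ converts the right-hand side into
\[
e^{\lambda s}\int_0^{\infty}e^{-\lambda r}CT(r)x\ud r=e^{\lambda s}\cdot C\RlA x=\bigl(\el\otimes C\RlA x\bigr)(s),
\]
where the middle equality is the usual Laplace representation of the resolvent applied to $Ax\in X$, combined with $C A^{-1}\in\sL(X,Y)$ (equivalently, we apply the closed operator $C\colon X_1\to Y$ to the convergent Bochner integral $\RlA x\in D(A)$).

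This establishes the first equality in the statement. The second equality $\el\otimes C\RlA x=[\RlsA]_{12}x$ is then read off directly from the explicit representation \eqref{resolv} in Corollary~\ref{res-sA}. The only genuinely non-routine point is justifying the exchange of the Bochner integral with pointwise evaluation; I would handle this by the explicit absolute-integrability estimate above, which allows a straightforward appeal to Fubini in $\R_+\times(-\infty,0]$.
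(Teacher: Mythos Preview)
Your proposal is correct and follows essentially the same route as the paper: compute the Laplace transform pointwise in $s$ using the factorization $CT(t+s)x=CA^{-1}\,T(t+s)Ax$, restrict the $t$-integral to $[-s,\infty)$, substitute, and recognize $CA^{-1}\RlA Ax=C\RlA x$. If anything, you are more explicit than the paper in justifying the interchange of the Bochner integral with pointwise evaluation via Fubini; the paper simply writes the pointwise identity without further comment.
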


\begin{proof}
For $x \in D(A), \lambda \in \C$ with $\Re \lambda > 0$ and $s \in (-\infty, 0]$ we obtain
\begin{align*}
\Bigl(\LT\bigl(T_{12}(\cd)x\bigr)(\lambda)\Bigr)(s) & = C A^{-1} \int_0^{\infty} e^{-\lambda t} \1_{[-t, 0]}(s) T (t+ s) A x \ud t \\
&= C A^{-1} \int_{-s}^{\infty} e^{-\lambda t} T (t+ s) A x \ud t \\
&= e^{\lambda s}C A^{-1} \int_{0}^{\infty} e^{-\lambda t} T (t) A x \ud t  \\
&= \bigl(\el \otimes C \RlA x\bigr)(s).\qedhere
\end{align*}
\end{proof}

We proceed by introducing the following well-known notion from control theory (see, e.g., \cite{lebext}) which is closely related to the entry $T_{12}(t)$.

\begin{definition}
The observation operator $C\in\sL(X_1,Y)$ is called \emph{$p$-admissible} (with respect to $A$) if there exists $t_0>0$ and a constant $M\ge 0$ such that
\begin{equation*} 
	\int_0^{t_0}\bigl\|CT(s)x\bigr\|_{Y}^{p}\ud s \leq M\|x\|_X^p\qquad\text{for all }x\in D(A).
\end{equation*}
\end{definition}

\begin{remark} \label{rem:admiss-T_12}
Since for $t\ge0$ and $x\in D(A)$ we have
\begin{align*}
\|T_{12}(t)x\|_{E_1^p}^p
=\int_{-t}^{0}\bigl\|CT(t+s)x\bigr\|_Y^p\ud s
=\int_{0}^{t}\bigl\|CT(s)x\bigr\|_Y^p\ud s
\end{align*}
the observation operator $C\in\sL(X_1,Y)$ is $p$-admissible if and only if $T_{12}(t):D(A)\subset X\to E_1^p$ has a bounded extension in $\sL(X,E_1^p)$ for some $t>0$. Moreover we note that for $C\in\sL(X_1,Y)$ the condition to be a $p$-admissible observation operator gets stronger with growing $p\ge1$.
\end{remark}

Next we give different characterizations of admissibility for observation operators where we have to distinguish the cases $p>1$ and $p=1$.

\begin{lemma}\label{carammC}Let $p>1$. Then the operator $C$ is $p$-admissible if and only if for every $x \in X$ we have $T(\cd) x \in \L^p\bigl([0,t_0],D(C_L)\bigr)$.
\end{lemma}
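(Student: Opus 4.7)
The plan is to prove the two implications separately.

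For the forward direction, suppose $C$ is $p$-admissible. Then the map $x\mapsto CT(\cd)x$, a priori defined on $D(A)$, extends by density to a bounded operator $X\to\L^p\bigl([0,t_0+1],Y\bigr)$ (the interval being enlarged by one unit using the exponential stability from Assumption~\ref{assum-reg}.(i)). The key observation is that for every $y\in X$ the element $\tfrac{1}{h}\int_0^h T(r)y\ud r$ lies in $D(A)$, so for $x\in X$ and $s\ge 0$ the identity
\[
C\,\frac{1}{h}\int_0^h T(r)T(s)x\ud r\;=\;\frac{1}{h}\int_s^{s+h}CT(r)x\ud r
\]
makes sense on both sides and holds for $x\in D(A)$ by a direct computation, hence extends by density to all of $X$. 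Two classical tools now finish the argument. First, the Lebesgue differentiation theorem applied to the $\L^p$-function $CT(\cd)x$ shows that the right-hand side converges in $Y$ as $h\searrow 0$ for almost every $s\in[0,t_0]$, which means exactly that $T(s)x\in D(C_L)$ for a.e.\ such $s$. Second, and crucially, the Hardy-Littlewood maximal inequality --- valid precisely because $p>1$ --- gives
\[
\int_0^{t_0}\sup_{h\in(0,1]}\Bigl\|\frac{1}{h}\int_s^{s+h}CT(r)x\ud r\Bigr\|_Y^p\ud s\;\le\;c_p\int_0^{t_0+1}\bigl\|CT(r)x\bigr\|_Y^p\ud r\;\le\;c_p\,M\,\|x\|_X^p.
\]
Combined with the trivial $\L^p$-bound on $s\mapsto\|T(s)x\|_X$, this yields $T(\cd)x\in\L^p\bigl([0,t_0],D(C_L)\bigr)$.

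For the converse, I would apply the closed graph theorem to the linear map $\Phi:X\to\L^p\bigl([0,t_0],D(C_L)\bigr)$, $\Phi(x):=T(\cd)x$. If $x_n\to x$ in $X$ and $\Phi(x_n)\to f$ in $\L^p\bigl([0,t_0],D(C_L)\bigr)$, then the continuous embedding $D(C_L)\hookrightarrow X$ from Proposition~\ref{D(C_L)-Ban} forces $\Phi(x_n)\to f$ in $\L^p\bigl([0,t_0],X\bigr)$ as well, while $T(\cd)x_n\to T(\cd)x$ uniformly on $[0,t_0]$; hence $f=T(\cd)x$ and $\Phi$ is bounded. For $x\in D(A)$ we have $T(s)x\in D(A)\subset D(C_L)$ and $C_LT(s)x=CT(s)x$, and since $C_L\in\sL\bigl(D(C_L),Y\bigr)$ by Proposition~\ref{D(C_L)-Ban}, it follows that
\[
\int_0^{t_0}\bigl\|CT(s)x\bigr\|_Y^p\ud s\;\le\;\|C_L\|^p\int_0^{t_0}\|T(s)x\|_L^p\ud s\;\le\;\|C_L\|^p\,\|\Phi\|^p\,\|x\|_X^p,
\]
which is exactly $p$-admissibility.

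The main obstacle is the forward direction, where the task is not merely to show that $C_LT(s)x$ exists for almost every $s$ (which follows cheaply from Lebesgue differentiation), but that the full $D(C_L)$-norm $\|T(s)x\|_L$ --- which contains a supremum over averaging scales $h$ --- defines an $\L^p$-function of $s$. The Hardy-Littlewood maximal inequality is tailored to precisely this kind of estimate, and it forces the restriction $p>1$; the case $p=1$ (treated separately) is genuinely different because the maximal operator is only of weak type $(1,1)$.
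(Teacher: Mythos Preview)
Your proposal is correct and follows essentially the same route as the paper's proof: both directions rest on the Hardy--Littlewood maximal inequality for the forward implication and a closed graph argument for the converse. The only organizational difference is that the paper proves the estimate $\int_0^{t_0}\|T(s)x\|_L^p\,\mathrm{d}s\le \bar M^p\|x\|_X^p$ first on the dense subspace $D(A)$ (where $T(\cdot)x$ is manifestly continuous into $X_1\hookrightarrow D(C_L)$) and then invokes the abstract extension Lemma~\ref{lem-app}, whereas you work with general $x\in X$ from the outset, using the Lebesgue differentiation theorem to secure $T(s)x\in D(C_L)$ almost everywhere before applying Hardy--Littlewood; similarly, for the converse the paper again appeals to Lemma~\ref{lem-app} (condition~(b)) while you spell out the closed graph argument directly. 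One small point to watch in your forward direction: concluding $T(\cdot)x\in \L^p\bigl([0,t_0],D(C_L)\bigr)$ requires not only that $\|T(\cdot)x\|_L\in\L^p$ but also strong measurability into $D(C_L)$; this is most cleanly obtained by first establishing the bound on $D(A)$ and then passing to the limit (which is precisely what Lemma~\ref{lem-app} encodes).
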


\begin{proof}
We first introduce the following operators and spaces referring to the setting of Lemma~\ref{lem-app}
\begin{align*}
&\tilde{Q}:X\to\L^p\bigl([0,t_0],X\bigr),&&\kern-50pt\tilde{Q} x := T(\cd) x,\\
&Q:D(A)\subset X\to\L^p\bigl([0,t_0],D(C_L)\bigr),&&\kern-50pt Qx:=T(\cd)x,
\end{align*}
$D:=D(A)$, $V:=X$, $W:=\L^p\bigl([0,t_0],D(C_L)\bigr)$ and $Z:=\L^p\bigl([0,t_0],X\bigr)$. Here by Proposition~\ref{D(C_L)-Ban} $D(C_L)$  is a Banach space and for $x\in D(A)$ we have $Qx \in C\bigl([0,t_0], X_1\bigr)\subset\L^p\bigl([0,t_0],D(C_L)\bigr)$.

We now show that if $C$ is $p$-admissible, then there exists a constant $\bar M \ge 0$ such that
\begin{equation}\label{condizionea}
\left(\int_0^{t_0} \| T(s) x\|_L^p\ud s \right)^{\frac{1}{p}} \leq \bar M \| x\|_X\qquad\text{for all } x \in D(A).
\end{equation}
To do so we recall that for a function $f \in\L^1_{\textrm{loc}}(\R)$ its \emph{Maximal Function} (cf. \cite[Sect.I.1]{stein}) is defined by
\[
(\mathcal{M}f)(s) := \sup_{t > 0} \frac{1}{2t} \int_{s-t}^{t+s} |f(r)|\ud r.
\]
Then the Hardy--Littlewood Maximal Theorem (see \cite[Thm.I.1.1]{stein}) asserts that $\mathcal{M}f \in\L^p(\R)$ for $f \in\L^p(\R)$ with $1 < p \leq \infty$  and that there exists a constant $C_p$ depending only on $p$ such that
\[
\left\|\mathcal{M}f \right\|_{\L^p(\R)} \leq C_p \left\| f \right\|_{\L^p(\R)}.
\]
Using this for
\[
f(r):=
\begin{cases}
\| CT(r)x \|_Y &\text{if } 0 \leq r \leq t_0,\\
\qquad 0 & \text{else},
\end{cases}
\]
where $x \in D(A)$, we obtain
\begin{align*}
\left(\int_0^{t_0} \| T(s) x\|_L^p \ud s\right)^{\frac{1}{p}} 
& \leq \left(\int_0^{t_0} \| T(s) x\|_X^p \ud s \right)^{\frac{1}{p}} +  \left(\int_0^{t_0} \sup_{t \in (0,1]} \biggl\| \frac{1}{t} \int_0^t C T(r) T(s) x \ud r\biggr\|_Y^p \ud s \right)^{\frac{1}{p}} \\ 
 &\leq Kt_0^{\frac1p} \| x \|_X + \left(\int_0^{t_0} \biggl(\sup_{t \in (0,1]} \frac{1}{t} \int_s^{t+s}  \bigl\|  C T(r)  x \bigr\|_Y\ud r\biggr)^p \ud s\right)^{\frac{1}{p}} \\ \nonumber
& \leq  Kt_0^{\frac1p} \| x \|_X  + 2\left\| \mathcal{M}f \right\|_{\L^p(\R)} \\ 
& \leq  Kt_0^{\frac1p} \| x \|_X  + 2C_p M^{\frac{1}{p}} \| x \|_X.
\end{align*}
Here we used that the semigroup $\Tt$ is bounded by a constant $K \ge 1$, the Hardy-Littlewood Maximal Theorem and the fact that the observation operator $C$ is $p$-admissible.
This shows \eqref{condizionea} for $\bar M :=Kt_0^{\frac1p} + 2C_p M^{\frac{1}{p}}$.

It thus follows that if $C$ is $p$-admissible, then condition~(a) of Lemma~\ref{lem-app} is satisfied and we conclude that $\tilde Q \in \sL\bigl(X, \L^p\bigl([0,t_0],D(C_L)\bigr)\bigr)$, i.e., $T(\cd) x \in\L^p\bigl([0,t_0],D(C_L)\bigr)$ for every $x \in X$.
In particular, for every $x \in X$ this implies $T(r) x \in D(C_L)$ for almost all $r \in [0, t_0]$.

Conversely, if $T(\cd) x \in \L^p\bigl([0,t_0],D(C_L)\bigr)$ for every $x \in X$ then $ \rg(\tilde Q) \in \L^p\bigl([0,t_0],D(C_L)\bigr)$.
Thus condition~(b) of Lemma~\ref{lem-app} is satisfied and we conclude $\tilde Q \in \sL\bigl(X, \L^p\bigl([0,t_0],D(C_L)\bigr)\bigr)$.
From Proposition~\ref{D(C_L)-Ban} it then follows $C_L \tilde Q \in \sL\bigl(X, \L^p\bigl([0,t_0],Y\bigr)\bigr)$ and therefore $C$ is $p$-admissible.
\end{proof}

\begin{remark}
If $C$ is $p$-admissible for some $p >1$ then the previous result together with the semigroup property imply that  for all $x \in X$ we have $\rg\bigl(T(t)x\bigr)\subset D(C_L)$ for almost all $t\ge0$.
\end{remark}

As we will see next the range condition in the previous remark holds also in the case $p=1$ (see also \cite[Theorem 4.5]{lebext}).

\begin{lemma}\label{caso:p=1}
If $C$ is $1$-admissible and $x \in X$, then $T(t)x \in D(C_L)$ for almost all $t \ge 0$.
\end{lemma}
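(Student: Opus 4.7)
The plan is to leverage $1$-admissibility to realize $s\mapsto CT(s)x$ as the values of an $\L^1$-valued bounded operator defined on all of $X$, and then identify $C_L T(t)x$ with its values via the vector-valued Lebesgue differentiation theorem.

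Concretely, I would first observe that $1$-admissibility means the linear map $\Phi_0:D(A)\to \L^1\bigl([0,t_0],Y\bigr)$, $\Phi_0(x):=CT(\cd)x$, is bounded with norm $\le M$. Since $D(A)$ is dense in $X$, it extends uniquely to $\Phi\in\sL\bigl(X,\L^1([0,t_0],Y)\bigr)$. The next step is to establish, for every $x\in X$ and all $t,h\ge0$ with $t+h\le t_0$, the identity
\[
\int_t^{t+h}\Phi(x)(s)\,\ud s \;=\; C\int_0^h T(s)T(t)x\,\ud s. \qquad(\ast)
\]
For $x\in D(A)$ this is immediate: by the semigroup property $T(t+s)x=T(s)T(t)x$, the integrand $T(\cd)T(t)x$ takes values in $X_1$, and $C\in\sL(X_1,Y)$ may be pulled through the Bochner integral. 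To pass from $D(A)$ to $X$, approximate by $x_n\in D(A)$ with $x_n\to x$: the left-hand side of $(\ast)$ depends continuously on $x\in X$ because $\Phi$ and integration $\L^1\bigl([0,t_0],Y\bigr)\to Y$ are bounded, while the right-hand side does because $y\mapsto\int_0^h T(s)y\,\ud s$ is continuous from $X$ into $X_1$ (use $A\int_0^h T(s)y\,\ud s=T(h)y-y$) and $C\in\sL(X_1,Y)$.

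With $(\ast)$ in hand, the vector-valued Lebesgue differentiation theorem applied to the strongly measurable function $\Phi(x)\in\L^1([0,t_0],Y)$ yields
\[
\lim_{h\searrow0}\frac{1}{h}\int_t^{t+h}\Phi(x)(s)\,\ud s \;=\; \Phi(x)(t) \quad\text{for a.e. }t\in[0,t_0],
\]
so that $(\ast)$ forces $\lim_{h\searrow0}C\frac{1}{h}\int_0^h T(s)T(t)x\,\ud s=\Phi(x)(t)$ for almost every $t\in[0,t_0]$, which is exactly the statement $T(t)x\in D(C_L)$. Extending to all $t\ge0$ costs nothing: apply the same conclusion to $T(kt_0)x$ in place of $x$ for each $k\in\N$ and take the countable union of exceptional null sets.

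I expect the main obstacle to be the identity $(\ast)$ itself, specifically the passage from $D(A)$ to all of $X$: it rests simultaneously on the density-extension of $\Phi$ afforded by $1$-admissibility and on the (standard but crucial) continuity $X\to X_1$ of $y\mapsto\int_0^h T(s)y\,\ud s$. Once $(\ast)$ is available, Lebesgue differentiation does the rest without further effort.
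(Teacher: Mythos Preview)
Your proposal is correct and follows essentially the same approach as the paper's proof: extend $x\mapsto CT(\cd)x$ by density to a bounded operator $X\to\L^1([0,t_0],Y)$, establish the averaged identity $(\ast)$ first on $D(A)$ and then on $X$ by continuity of both sides, and conclude via the vector-valued Lebesgue differentiation theorem. The paper is slightly terser about the continuity in $x$ of the right-hand side, whereas you spell out the mechanism $y\mapsto\int_0^h T(s)y\,\ud s\in X_1$; otherwise the arguments coincide.
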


\begin{proof}
If $C$ is $1$-admissible, then the map $Q:D(A) \to \L^1\bigl([0,t_0],Y\bigr)$ given by $Qx =C T(\cd) x$ has a bounded continuous extension $\bar{Q}$ on all of $X$.
Furthermore for all $x \in D(A)$, $t\ge 0$ and  $r >0$ we have
\[
\frac{1}{r} \int _t ^{t+r} \bigl(\bar{Q}x\bigr)(s) \ud s = C \frac{1}{r} \int_0^{r} T\left(t+s\right) x \ud s.
\]
Since both sides depend continuously on $x$, the equality holds for every $x\in X$.
Letting $r\to0$, it follows that $T(t)x \in D\left(C_L\right)$ if and only if $\bar{Q}x$ has a Lebesgue point in $t$.
Hence by the Lebesgue differentiation theorem (see \cite[Thm.II.2.9]{DU:77}) it follows that $T(t)x \in D\left(C_L\right)$ for almost all $t\ge 0$.
\end{proof}

Finally we prove the following result which is closely related to \cite[Prop.~2.3]{lebext}. Here we need again Assumption~\ref{assum-reg}.(i).

\begin{lemma}\label{Ctuttit}
If the observation operator $C$ is $p$-admissible, then there exists $M_C\ge0$ such that
\begin{equation}\label{eq:Ctuttit}
	\int_0^{t}\bigl\|CT(s)x\bigr\|_{Y}^{p}\ud s \leq M_C\|x\|_X^p\qquad\text{for all } x \in D(A),\ t \ge 0.
\end{equation}
\end{lemma}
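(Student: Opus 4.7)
My plan is to bootstrap the admissibility estimate on the short interval $[0,t_0]$ to an estimate uniform in $t\ge0$ by exploiting the semigroup property together with the uniform exponential stability from Assumption~\ref{assum-reg}.(i). The key observation is that the translation property $T(kt_0+r)x=T(r)T(kt_0)x$ lets us chop the integral over $[0,t]$ into blocks of length $t_0$, apply the admissibility constant $M$ on each block, and pick up geometric decay from the stability bound $\|T(kt_0)\|\le Ke^{\omega k t_0}$ (with $\omega<0$).

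More precisely, given $t\ge0$ I would pick $n\in\N$ with $nt_0\ge t$ and write
\[
\int_0^{t}\bigl\|CT(s)x\bigr\|_Y^p\ud s\le\sum_{k=0}^{n-1}\int_{kt_0}^{(k+1)t_0}\bigl\|CT(s)x\bigr\|_Y^p\ud s.
\]
The substitution $s=kt_0+r$ together with $T(kt_0+r)=T(r)T(kt_0)$ turns the $k$-th summand into $\int_0^{t_0}\|CT(r)T(kt_0)x\|_Y^p\ud r$. Since $x\in D(A)$ and $T(kt_0)$ maps $D(A)$ into itself, $p$-admissibility applies to the vector $T(kt_0)x\in D(A)$ and bounds this by $M\|T(kt_0)x\|_X^p$. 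Using \eqref{eq:T(t)-stab} this is at most $MK^p e^{p\omega kt_0}\|x\|_X^p$.

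Summing the geometric series, which converges because $\omega<0$ ensures $e^{p\omega t_0}<1$, gives
\[
\int_0^{t}\bigl\|CT(s)x\bigr\|_Y^p\ud s\le\sum_{k=0}^{\infty}MK^pe^{p\omega kt_0}\|x\|_X^p=\frac{MK^p}{1-e^{p\omega t_0}}\,\|x\|_X^p,
\]
so \eqref{eq:Ctuttit} holds with $M_C:=\frac{MK^p}{1-e^{p\omega t_0}}$, independent of $t$.

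There is really no serious obstacle here: the argument is a standard telescoping-plus-geometric-series trick, and the only place where something nontrivial enters is the use of exponential stability to make the geometric sum finite. Without Assumption~\ref{assum-reg}.(i) one would at best obtain a bound of the form $M_C(t)\lesssim t$, which explains why the statement is placed after the stability hypothesis has been invoked.
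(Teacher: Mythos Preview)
Your proof is correct and follows essentially the same argument as the paper: chop $[0,t]$ into blocks of length $t_0$, use the semigroup law $T(kt_0+r)=T(r)T(kt_0)$ together with $p$-admissibility on each block, and sum the resulting geometric series via the exponential stability bound \eqref{eq:T(t)-stab}. The only cosmetic difference is that the paper treats the case $t\le t_0$ separately and writes $t=nt_0+r$ for $t>t_0$, arriving at the (slightly redundant) constant $M_C=M+\frac{MK^p}{1-e^{p\omega t_0}}$, whereas your uniform treatment yields the cleaner $M_C=\frac{MK^p}{1-e^{p\omega t_0}}$.
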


\begin{proof}
If $C$ is $p$-admissible, there exists $t_0 > 0$ and $M > 0$ such that
\begin{equation*}
	\int_0^{t_0}\bigl\|CT(s)x\bigr\|_{Y}^{p}\ud s \leq M\|x\|_X^p\qquad\text{for all } x \in D(A).
\end{equation*}
For $t \leq t_0 $ it is clear that
\begin{align*}
\int_0^{t}\bigl\|CT(s)x\bigr\|_{Y}^{p}\ud s
\leq \int_0^{t_0}\bigl\|CT(s)x\bigr\|_{Y}^{p}\ud s
\leq  M\|x\|_X^p\qquad\text{for all } x \in D(A).
\end{align*}
For $t > t_0$ we can write $t = n t_0 + r$ where $n \in \N$ and $0 \leq r < t_0$. Using \eqref{eq:T(t)-stab} we then obtain
\begin{align*}
\int_0^{t}\bigl\|CT(s)x\bigr\|_{Y}^{p}\ud s
& \le \sum_{k=0}^n \int_{k t_0}^{(k+1)t_0} \bigl\|CT(s)x\bigr\|_{Y}^{p}\ud s \\
& = \sum_{k=0}^n \int_{0}^{t_0} \bigl\|CT(s)T(k t_0)x\bigr\|_{Y}^{p}\ud s \\
& \leq M \sum_{k=0}^n \bigl\|T(k t_0)x\bigr\|_{X}^{p} \\
& \leq M K^p \frac{1}{1- e^{p \omega t_0}} \|x\|_{X}^{p} \qquad \text{for all } x \in D(A).
\end{align*}
Choosing $M_C :=M+ MK^p \frac{1}{1- e^{p \omega t_0}}$ we obtain \eqref{eq:Ctuttit}. This concludes the proof.
\end{proof}

By combining the previous results we obtain the main outcome of this subsection.

\begin{corollary}\label{admC}
If $\A$ is a generator, then $C$ is a $p$-admissible control operator. Conversely, if $C$ is a $p$-admissible control operator, then for every $t\ge0$ the operator $T_{12}(t):D(A)\subset X\to E_1^p$ has a (unique) bounded extension $\Ct:=\overline{T_{12}(t)} \in \sL(X,E_1^p)$. Moreover, $\Ctt$ is strongly continuous and
\begin{equation}\label{termine12}
\Ct x = \1_{[-t,0]}(\cd) C_L T(t+\cd) x \quad \text{for every } x \in X.
\end{equation}
\end{corollary}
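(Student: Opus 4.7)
The plan is to handle the two implications, strong continuity, and the pointwise formula one after another.

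If $\A$ generates a semigroup $\sTtt$ on $\Xp$, its upper-triangular form noted after Lemma~\ref{lem-char-ABHN} forces a strongly continuous $(1,2)$-entry $U_{12}(t)\in\sL(X,E_1^p)$. Fix $x\in D(A)$: Lemma~\ref{lem:t_12-welldef} shows that $T_{12}(\cd)x$ is continuous and exponentially bounded, while Lemma~\ref{laplC} identifies its Laplace transform with $[\RlsA]_{12}x=\el\otimes C\RlA x$. The uniqueness of the Laplace transform (\cite[Thm.~1.7.3]{AB}) applied entrywise then yields $U_{12}(t)x=T_{12}(t)x$ for every $t\ge0$ and every $x\in D(A)$, so $T_{12}(t)$ admits the bounded extension $U_{12}(t)\in\sL(X,E_1^p)$. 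Remark~\ref{rem:admiss-T_12} translates this into the $p$-admissibility of $C$.

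Conversely, assume $C$ is $p$-admissible. Combining Lemma~\ref{Ctuttit} with the identity in Remark~\ref{rem:admiss-T_12} gives the uniform estimate $\|T_{12}(t)x\|_{E_1^p}^p\le M_C\|x\|_X^p$ for every $t\ge0$ and $x\in D(A)$, so density of $D(A)\subset X$ produces the unique bounded extension $\Ct=\overline{T_{12}(t)}\in\sL(X,E_1^p)$, still uniformly bounded in $t$. Strong continuity at any $x\in X$ then follows by an $\varepsilon/3$ argument: approximate $x$ by $x_\varepsilon\in D(A)$, absorb the approximation errors via the uniform bound on $\|\Ct\|$, and use Lemma~\ref{lem:t_12-welldef} to make $\|T_{12}(t)x_\varepsilon-T_{12}(s)x_\varepsilon\|_{E_1^p}$ small for $|t-s|$ small.

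It remains to prove the formula \eqref{termine12}. On $D(A)$ it is immediate, since $T(t+s)x\in X_1\subset D(C_L)$ gives $CT(t+s)x=C_L T(t+s)x$. For general $x\in X$ I would pick a sequence $x_n\in D(A)$ with $x_n\to x$, so that $\Ct x_n\to\Ct x$ in $E_1^p$ and, along a subsequence, pointwise a.e. To identify the limit I use that the assignment $x\mapsto\bigl(s\mapsto\1_{[-t,0]}(s)C_L T(t+s)x\bigr)$ is bounded from $X$ into $E_1^p$: for $p>1$ this follows from Lemma~\ref{carammC} combined with Proposition~\ref{D(C_L)-Ban} and an iteration over $[0,t]$ analogous to Lemma~\ref{Ctuttit}, and for $p=1$ from the continuous extension $\bar Q:X\to\L^1([0,t_0],Y)$ built in the proof of Lemma~\ref{caso:p=1}, which satisfies $\bar Q x(s)=C_L T(s)x$ a.e. A further subsequence then yields pointwise a.e.\ convergence of $\1_{[-t,0]}(s)C_L T(t+s)x_n$ to $\1_{[-t,0]}(s)C_L T(t+s)x$, and matching the two a.e.\ limits gives \eqref{termine12}.

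The main obstacle I anticipate is precisely this last step in the case $p=1$: Lemma~\ref{carammC} is unavailable there, so the boundedness of $x\mapsto C_L T(\cd)x$ into $\L^1$ must be extracted from the weaker Lebesgue-point structure of Lemma~\ref{caso:p=1}, and one has to argue carefully that pointwise-a.e.\ convergence really identifies the two $E_1^p$-limits.
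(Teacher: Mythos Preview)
Your argument is correct and tracks the paper's proof closely for the first implication, the existence of the bounded extension, and strong continuity; all three rely on Lemmas~\ref{laplC}, \ref{Ctuttit}, \ref{lem:t_12-welldef} and Remark~\ref{rem:admiss-T_12} in the same way.

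The only substantive difference lies in the derivation of \eqref{termine12}. You split into the cases $p>1$ (via Lemma~\ref{carammC}) and $p=1$ (via Lemma~\ref{caso:p=1}), and flag the latter as a potential obstacle. The paper avoids this split entirely by a simple observation you overlooked: by Remark~\ref{rem:admiss-T_12}, $p$-admissibility of $C$ for any $p\ge1$ implies $1$-admissibility, so Lemma~\ref{caso:p=1} applies uniformly. Its proof already constructs the bounded extension $\bar Q:X\to\L^1([0,t_0],Y)$ of $x\mapsto CT(\cd)x$ and identifies $(\bar Q x)(s)=C_L T(s)x$ almost everywhere via the Lebesgue differentiation theorem. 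After the change of variable $s\mapsto t+s$, this is exactly $\Ct x$, so \eqref{termine12} follows for every $p\ge1$ in one stroke. Your anticipated obstacle therefore dissolves, and the appeal to Lemma~\ref{carammC} and the Hardy--Littlewood machinery for $p>1$ is unnecessary here.
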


\begin{proof}
If $\A$ is the generator of a $C_0$-semigroup $\sTtt$, then by Lemma~\ref{laplC} and the uniqueness of the Laplace transform (see \cite[Prop.~1.7.3]{AB}) we obtain that $\bigl[\sTt\bigr]_{12}x=T_{12}(t)x$ for all $t\ge0$ and $x\in D(A)$. Since $\bigl[\sTt\bigr]_{12}\in\sL(X,E_1^p)$,  Remark~\ref{rem:admiss-T_12} then implies that $C$ is $p$-admissible.

Conversely assume that $C$ is $p$-admissible. Then by Remark~\ref{rem:admiss-T_12} and Lemma~\ref{Ctuttit} each operator $T_{12}(t):D(A)\subset X\to E_1^p$ has a (unique) extension $\Ct\in\sL(X,E_1^p)$. Since by Lemma~\ref{lem:t_12-welldef} the map $t \mapsto \Ct x$ is continuous for every $x \in D(A)$, by a standard density argument (cf. \cite[Lem.~I.5.2]{EN}), $\Ctt$ is strongly continuous.
Finally, using Lemma~\ref{caso:p=1} we obtain \eqref{termine12}.
\end{proof}

\subsection{The Entry \boldmath{$T_{23}(t)$} and Admissible Control Operators}
\label{T23}

We proceed using the same scheme as in the previous subsection and define for $t\ge 0$ the operators
\begin{align*}
&T_{23}(t):E_2^p\to X_{-1},\\
&T_{23}(t)u:=\int_0^t T_{-1}(t-r) B u(r)\ud r \quad \text{for } u \in E_2^p.
\end{align*}

Again we first verify some basic properties of this operator family.

\begin{lemma}\label{lem:t_23-welldef}
For every $u\in E_2^p$ the function $T_{23}(\cd)u:\R_+\to X_{-1}$ is continuous and bounded.
\end{lemma}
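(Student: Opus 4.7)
My plan is essentially to mimic the standard argument for the mild-solution formula of an inhomogeneous Cauchy problem, using that $B \in \sL(U, X_{-1})$ and that the extrapolated semigroup $\bigl(T_{-1}(t)\bigr)_{t\ge0}$ is strongly continuous on $X_{-1}$ with the same exponential bound as $\Tt$ (so $\|T_{-1}(t)\|_{\sL(X_{-1})} \le Ke^{\omega t}$ by Assumption~\ref{assum-reg}.(i)). Then $r\mapsto T_{-1}(t-r)Bu(r)$ is Bochner-integrable on $[0,t]$ as a map into $X_{-1}$ for every $u\in E_2^p$, so $T_{23}(t)u$ is well-defined.

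For boundedness, I would estimate directly
\[
\bigl\|T_{23}(t)u\bigr\|_{X_{-1}}
\le \|B\|_{\sL(U,X_{-1})}\int_0^t Ke^{\omega(t-r)}\,\|u(r)\|_U\ud r.
\]
If $p>1$, applying H\"older's inequality with conjugate exponent $q$ yields
\[
\bigl\|T_{23}(t)u\bigr\|_{X_{-1}}
\le K\|B\|\biggl(\int_0^\infty e^{q\omega\sigma}\ud\sigma\biggr)^{\!1/q}\|u\|_{E_2^p}
= K\|B\|(-q\omega)^{-1/q}\|u\|_{E_2^p},
\]
and the case $p=1$ is even easier, giving the bound $K\|B\|\cdot\|u\|_{E_2^1}$. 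This yields the (uniform in $t$) bound.

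For continuity at a point $s\ge0$, let $h>0$ and split
\[
T_{23}(s+h)u-T_{23}(s)u
= \int_s^{s+h} T_{-1}(s+h-r)Bu(r)\ud r
 + \bigl(T_{-1}(h)-Id\bigr)\,T_{23}(s)u,
\]
using the semigroup identity $T_{-1}(s+h-r)=T_{-1}(h)T_{-1}(s-r)$ on $X_{-1}$ in the second summand. The first term is controlled exactly as in the boundedness estimate, but with $\|u\|_{L^p([s,s+h],U)}$ in place of $\|u\|_{E_2^p}$, which tends to $0$ as $h\searrow0$ by absolute continuity of the integral. The second term tends to $0$ since $T_{23}(s)u\in X_{-1}$ and $\bigl(T_{-1}(t)\bigr)_{t\ge0}$ is strongly continuous on $X_{-1}$. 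Left-continuity at $s>0$ follows from the analogous identity
\[
T_{23}(s)u-T_{23}(s-h)u
= \int_{s-h}^{s} T_{-1}(s-r)Bu(r)\ud r
 + \bigl(T_{-1}(h)-Id\bigr)\,T_{23}(s-h)u,
\]
together with the uniform bound on $\|T_{23}(s-h)u\|_{X_{-1}}$ already established.

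I do not expect any genuine obstacle: the only small care needed is to rewrite the integrand on $[0,s]$ so that the semigroup difference factors out cleanly, after which strong continuity of $\bigl(T_{-1}(t)\bigr)_{t\ge 0}$ at $t=0$ together with the H\"older estimate closes both the boundedness and continuity statements in one stroke.
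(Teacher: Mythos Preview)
Your approach is sound and more self-contained than the paper's one-line proof, which simply invokes \cite[Prop.~1.3.5.(b)]{AB} on the continuity and boundedness of the convolution $T_{-1}*(Bu)$. Your H\"older estimate for boundedness and the right-continuity argument are both clean.

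There is, however, a small gap in the left-continuity step. You claim that $(T_{-1}(h)-Id)\,T_{23}(s-h)u\to0$ follows from strong continuity of $\bigl(T_{-1}(t)\bigr)_{t\ge0}$ together with the uniform bound on $T_{23}(s-h)u$. But strong continuity at $0$ applied to a \emph{moving} element is not guaranteed by boundedness alone; one can easily produce a $C_0$-semigroup $\bigl(S(h)\bigr)_{h\ge0}$ and a bounded family $(x_h)$ with $(S(h)-Id)x_h\not\to0$. The quickest repair is to push the difference inside the integral,
\[
(T_{-1}(h)-Id)\,T_{23}(s-h)u=\int_0^{s-h}\bigl(T_{-1}(s-r)-T_{-1}(s-h-r)\bigr)Bu(r)\ud r,
\]
and apply dominated convergence on $[0,s]$: the integrand tends to $0$ pointwise in $r$ by strong continuity of $T_{-1}$, and is dominated by $2K\|B\|\,\|u(r)\|_U\in\L^1[0,s]$. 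Alternatively, substitute $r\mapsto t-r$ in the convolution and use strong continuity of translations in $\L^p$, which handles both one-sided limits at once.
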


\begin{proof}
This follows from \cite[Prop.~1.3.5.(b)]{AB} on the continuity and boundedness of convolutions.
\end{proof}

By the previous result we can consider the Laplace transform of $T_{23}(\cd)u$ in $X_{-1}$.

\begin{lemma}\label{laplB}
For every $\lambda \in \C$ with $\Re \lambda > 0$ and every $u\in E_2^p$ we have
\[
\LT\bigl(T_{23}(\cd)u\bigr) (\lambda)=\RlAme B\hat u(\lambda)=[\RlsA]_{23}u.
\]
\end{lemma}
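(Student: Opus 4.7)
The plan is to compute the Laplace transform directly as an iterated integral and apply Fubini's theorem to separate the variables. Specifically, for $\Re\lambda>0$ and $u\in E_2^p$ I would write
\[
\LT\bigl(T_{23}(\cd)u\bigr)(\lambda)=\int_0^{\infty}e^{-\lambda t}\int_0^t T_{-1}(t-r)Bu(r)\ud r\ud t,
\]
where the inner integral is understood in $X_{-1}$. The integrand $(t,r)\mapsto\1_{\{r\le t\}}e^{-\lambda t}T_{-1}(t-r)Bu(r)$ is measurable with values in $X_{-1}$, and I would estimate its $X_{-1}$-norm by $e^{-\Re\lambda\cd t}\|T_{-1}(t-r)\|_{\sL(X_{-1})}\|B\|_{\sL(U,X_{-1})}\|u(r)\|_U$. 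Since the extrapolated semigroup inherits the exponential stability from $\Tt$ (Assumption~\ref{assum-reg}.(i)) and $u\in\LpRpU$, the resulting scalar double integral is finite (one can use H\"older for $p>1$ or a direct estimate for $p=1$ together with the factor $e^{-\Re\lambda\cd t}$), so Fubini applies.

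After exchanging the order of integration and performing the substitution $s=t-r$, the double integral factorizes:
\begin{align*}
\LT\bigl(T_{23}(\cd)u\bigr)(\lambda)
&=\int_0^{\infty}\int_0^{\infty}e^{-\lambda(s+r)}T_{-1}(s)Bu(r)\ud s\ud r\\
&=\biggl(\int_0^{\infty}e^{-\lambda s}T_{-1}(s)\ud s\biggr)B\int_0^{\infty}e^{-\lambda r}u(r)\ud r\\
&=\RlAme B\,\hat u(\lambda),
\end{align*}
where in the last step I use the standard Laplace representation of the resolvent of the generator $A_{-1}$ of $\bigl(T_{-1}(t)\bigr)_{t\ge0}$, valid since $\Re\lambda>0$ lies in $\rho(A_{-1})=\rho(A)$. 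Pulling $B\in\sL(U,X_{-1})$ out of the second integral is legitimate because $B$ is bounded.

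The second equality in the statement is immediate: the $(2,3)$-entry of $\RlsA$ computed in Corollary~\ref{res-sA} is exactly $\RlAme B\LT_\lambda$, and $\LT_\lambda u=\hat u(\lambda)$ by definition. The only real point to watch is the Fubini justification in the $X_{-1}$-valued setting, but this reduces to the scalar estimate above via the Bochner integrability criterion, so no serious obstacle arises.
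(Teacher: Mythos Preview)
Your argument is correct and follows essentially the same route as the paper: write the Laplace transform as a double integral in $X_{-1}$, justify Fubini via the norm estimate coming from the exponential stability of $\bigl(T_{-1}(t)\bigr)_{t\ge0}$, swap the order of integration, substitute $s=t-r$, and identify the factor $\int_0^\infty e^{-\lambda s}T_{-1}(s)\ud s$ as $\RlAme$. The paper's proof is in fact terser than yours on the Fubini justification, so your version is if anything more complete.
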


\begin{proof}
For $u \in E_2^p$ we obtain by Fubini's theorem (see \cite[Thm.~1.1.9]{AB})
\begin{align*}
\LT\bigl(T_{23}(\cd)u\bigr) (\lambda) &= \int_0^{\infty}\int_0^t  e^{-\lambda t} T_{-1}(t-r) B u(r) \ud r \ud t \\
& = \int_0^{\infty}  \int_r^{\infty} e^{-\lambda t} T_{-1}(t-r) B u(r) \ud t \ud r \\
& = \int_0^{\infty}  \int_0^{\infty} e^{-\lambda (t+r)} T_{-1}(t) B u(r) \ud t \ud r \\
& = \RlAme B \hat u(\lambda).\qedhere
\end{align*}
\end{proof}

Next we recall the following well-known notion from control theory (see, e.g., \cite{W-Ad-contr}) which is closely related to the entry $T_{23}(t)$.

\begin{definition}
The control operator $B\in\sL(U,X_{-1})$ is called \emph{$p$-admissible} (with respect to $A$) if there exists $t_0>0$  such that $\rg\bigl(T_{23}(t_0)\bigr)\subset X$.
\end{definition}

\begin{remark}\label{rem-admiss-B}
Note that in any case $T_{23}(t_0) \in \sL (E_2^p, X_{-1})$. Thus if $B$ is $p$-admissible the closed graph theorem implies that $T_{23}(t_0) \in \sL (E_2^p, X)$. On the other hand, if $u\in\WpU$ then using integration by parts we obtain
\[
\int_0^{t_0}T_{-1}(t_0-r) B u(r)\ud r
=A^{-1}_{-1}\biggl(T(t_0)Bu(0)-Bu(t_0) + \int_0^{t_0}T_{-1}(t_0-r) B u' (r)\ud r\biggr)\in X.
\]
Since $\WpU$ is dense in $\LpRpU$ this shows that the operator $B$ is $p$-admissible if and only if there exists $t_0 > 0$ and a constant $M \ge 0$ such that
\[
\biggl\|\int_0^{t_0}T_{-1}(t_0-r)Bu(r)\ud r\biggr\|_X \leq  M\|u\|_{\LpRpU}\qquad\text{for all } u \in \WpU.
\]
Moreover we note that for an operator $B\in\sL(U,X_{-1})$ the condition to be a $p$-admissible control operator gets weaker with growing $p\ge1$.
\end{remark}

Analogously to Lemma~\ref{Ctuttit} we have the following result which is closely related to \cite[Prop.~2.5]{W-Ad-contr}. Here we need again Assumption~\ref{assum-reg}.(i).

\begin{lemma}\label{glmadmB}
If the control operator $B$ is $p$-admissible, then there exists  $M_B\ge0$ such that
\begin{equation}\label{eq:glmadmB}
\biggl\|\int_0^t T_{-1}(t-r) B u(r) \ud r \biggr\|_X \leq M_B \|u\|_{\LpRpU}\qquad\text{for all } u \in \LpRpU,\ t \ge 0.
\end{equation}
\end{lemma}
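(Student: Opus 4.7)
My proof would follow the pattern of Lemma~\ref{Ctuttit}, combining admissibility on a single window of length $t_0$ with the exponential decay of $\Tt$. Fix $t_0>0$ and $M\ge0$ as provided by Remark~\ref{rem-admiss-B}.

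First I would dispose of the range $t\le t_0$ by a translation trick: given $u\in\LpRpU$, define $v\in\LpRpU$ by $v(r):=0$ on $[0,t_0-t)$ and $v(r):=u(r-(t_0-t))$ on $[t_0-t,\infty)$. A change of variable yields $\int_0^{t_0}T_{-1}(t_0-r)Bv(r)\ud r=\int_0^t T_{-1}(t-s)Bu(s)\ud s$, while $\|v\|_{\LpRpU}\le\|u\|_{\LpRpU}$, so Remark~\ref{rem-admiss-B} applied to $v$ gives $\bigl\|\int_0^t T_{-1}(t-r)Bu(r)\ud r\bigr\|_X\le M\|u\|_{\LpRpU}$ on this range.

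For $t>t_0$ I would write $t=nt_0+r_0$ with $n\in\N$, $r_0\in[0,t_0)$, and decompose
\[
\int_0^t T_{-1}(t-r)Bu(r)\ud r=\int_0^{r_0}T_{-1}(t-r)Bu(r)\ud r+\sum_{k=0}^{n-1}\int_{r_0+kt_0}^{r_0+(k+1)t_0}T_{-1}(t-r)Bu(r)\ud r,
\]
then use the extrapolated semigroup law $T_{-1}(\sigma+\tau)=T(\sigma)T_{-1}(\tau)$ to pull $T(nt_0)$ out of the first integral and $T((n-k-1)t_0)$ out of the $k$-th summand. After the substitution $s=r-r_0-kt_0$, the factored inner integrals have the form $\int_0^{t_0}T_{-1}(t_0-s)Bu(s+r_0+kt_0)\ud s$ (and, for the boundary piece, the quantity already controlled by Step~1 with $t$ replaced by $r_0$). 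Admissibility together with Assumption~\ref{assum-reg}.(i) then gives, with $q:=e^{\omega t_0}<1$ and $J_k:=[r_0+kt_0,r_0+(k+1)t_0]$,
\[
\biggl\|\int_0^t T_{-1}(t-r)Bu(r)\ud r\biggr\|_X\le MKq^n\|u\|_{\LpRpU}+MK\sum_{k=0}^{n-1}q^{n-k-1}\|u\|_{\L^p(J_k)}.
\]

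The remaining task is to bound the series uniformly in $n$ and $r_0$. Since $J_0,\dots,J_{n-1}$ are pairwise disjoint subsets of $[0,t]$, for $p>1$ I would apply H\"older's inequality in the index $k$ with conjugate exponent $p'=p/(p-1)$, yielding a bound of $(1-q^{p'})^{-1/p'}\|u\|_{\LpRpU}$; the edge case $p=1$ is handled by $q^{n-k-1}\le1$ plus disjointness, giving $\|u\|_{\LeRpU}$ directly. Combining these with $q^n\le1$ produces \eqref{eq:glmadmB} for a constant $M_B$ depending only on $K$, $M$, $\omega$, $t_0$ and $p$. I expect the main obstacle to be this last bookkeeping step, where the two cases $p>1$ and $p=1$ must be treated separately; the earlier decomposition and factorizations are routine manipulations with the extrapolated semigroup.
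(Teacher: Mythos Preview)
Your proof is correct and follows essentially the same route as the paper: the translation trick for $t\le t_0$ and the decomposition of $[0,t]$ into a short initial piece plus full windows of length $t_0$, with $T(\cdot)$ factored out of each block, are exactly what the paper does.

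The one place you make life harder than necessary is the final summation. You keep the localized norms $\|u\|_{\L^p(J_k)}$ and then invoke H\"older (with a separate argument for $p=1$) to control $\sum_k q^{n-k-1}\|u\|_{\L^p(J_k)}$. The paper instead uses the trivial bound $\|u\|_{\L^p(J_k)}\le\|u\|_{\LpRpU}$ for every $k$ and simply sums the geometric series $\sum_{k=0}^{n-1}q^{n-k-1}\le(1-q)^{-1}$. This works uniformly for all $p\ge1$, gives the constant $M_B=MK+\tfrac{MK}{1-e^{\omega t_0}}$, and eliminates the case distinction you flagged as the ``main obstacle''. Your H\"older step yields a slightly sharper constant, but at the cost of a complication the argument does not need.
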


\begin{proof}
By assumption there exists $t_0 >0 $ and $M > 0$ such that
\[
\biggl\|\int_0^{t_0} T_{-1}(t_0-r) B u(r) \ud r \biggr\|_X \leq M \|u\|_{\LpRpU}\qquad\text{for all } u \in \LpRpU.
\]
For $0 \leq t \leq t_0$ we denote by $u_{t_0 -t}$ the translated function
\begin{equation}\label{eq:u_t}
u_{t_0 -t}(s) :=
\begin{cases}
0 &\text{if } 0 \leq s < t_0-t, \\
u( s -t_0 +t) &\text{if } s \ge t_0-t.
\end{cases}
\end{equation}
Then $u \in \LpRpU$ and $\| u \|_{\LpRpU} = \| u_{t_0-t} \|_{\LpRpU}$.
Moreover
\begin{equation*}
\int_0^t T_{-1}(t-r) B u(r) \ud r = \int_0^{t_0} T_{-1}(t_0 -r) B u_{t_0-t}(r) \ud r \in X.
\end{equation*}
This implies
\begin{align}\nonumber
\biggl\| \int_0^t T_{-1}(t-r) B u(r) \ud r \biggr\|_X & = \biggl\| \int_0^{t_0} T_{-1}(t_0-r) B u_{t_0-t}(r) \ud r \biggr\|_X \\ \label{minori}
 & \leq M \| u \|_{\LpRpU}\qquad\text{for all } u \in \LpRpU.
\end{align}
For $t \ge t_0$ we write $t = nt_0 + s$ for $n \in \N$ and $s \in [0,t_0)$. Then we obtain
\begin{align*}
\int_0^t T_{-1} (t-r)B u(r) \ud r
  &=  \int_0^{s} T_{-1}(nt_0 +s-r) B u(r) \ud r + \int_s^{nt_0 +s}\!\!\!\! T_{-1}(nt_0 +s-r) B u(r) \ud r \\
  &=:L_1+L_2.
\end{align*}
We consider the two terms of the sum separately. For the first one we get $L_1\in X$ and
\begin{align} \label{quellosopra}
\|L_1\|_X 
\le \bigl\| T(nt_0)\bigr\|\cdot\biggl\| \int_0^{s} T_{-1}(s-r) B u(r) \ud r  \biggr\|_X
 & \leq KM \| u\|_{\LpRpU}.
\end{align}
Here  we used again that $\Tt$ is bounded and \eqref{minori}.
For the second term we obtain
\begin{align*}
L_2
 &=  \sum_{k=0}^{n-1}\int_{k t_0}^{(k+1)t_0} T_{-1}(nt_0 -r) B u(r+s) \ud r   \\
 & = \sum_{k=0}^{n-1}T\bigl((n-(k+1))t_0\bigr)\cdot\int_{0}^{t_0} T_{-1}(t_0 -r) B u(r+s+kt_0) \ud r \in X.
\end{align*}
Moreover, using \eqref{eq:T(t)-stab} and  that $B$ is a $p$-admissible control operator this gives the estimates
\begin{align} 
\|L_2\|_X
 \leq  K \sum_{k=0}^{n-1} e^{\omega (n-k-1)t_0}\cdot M \| u \|_{\LpRpU} \label{finaledue}
 \leq \frac{K M}{1- e^{\omega t_0}}\| u \|_{\LpRpU}.
\end{align}
Summing up \eqref{quellosopra} and \eqref{finaledue} we obtain \eqref{eq:glmadmB} for $M_B:= MK + \frac{MK}{1- e^{\omega t_0}}$.
\end{proof}

By combing the previous results we obtain the main statement of this subsection which corresponds to Corollary~\ref{admC}.

\begin{corollary}\label{cor-addm-B}
If $\A$ is a generator, then $B$ is a $p$-admissible control operator. Conversely, if $B$ is a $p$-admissible control operator then for every $t\ge0$ we have $\rg\bigl(T_{23}(t)\bigr)\subset X$ and  $\Bt:=T_{23}(t)\in\sL(E_2^p,X)$. Moreover, the family $\Btt$ is strongly continuous and uniformly bounded.
\end{corollary}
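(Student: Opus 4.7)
The strategy parallels that of Corollary~\ref{admC} and rests on the machinery of Lemmas~\ref{lem:t_23-welldef}, \ref{laplB} and \ref{glmadmB}. My plan is to handle the two implications in the order stated, and to isolate strong continuity in $X$ as the only genuinely non-routine step.

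For the first implication, I would assume $\A$ generates a $C_0$-semigroup $\sTtt$ on $\Xp$. Then for each $u\in E_2^p$ the function $t\mapsto [\sTt]_{23}u$ is a continuous $X$-valued function whose Laplace transform, for $\Re\lambda$ sufficiently large, equals $[\RlsA]_{23}u=\RlAme B\hat u(\lambda)$ by Corollary~\ref{res-sA}. By Lemma~\ref{laplB} this is also the Laplace transform of $T_{23}(\cd)u$, viewed as an $X_{-1}$-valued function. The uniqueness theorem for the Laplace transform (see \cite[Prop.~1.7.3]{AB}), combined with continuity of both sides in $X_{-1}$ (Lemma~\ref{lem:t_23-welldef}), then forces $T_{23}(t)u=[\sTt]_{23}u\in X$ for every $t\ge 0$ and every $u\in E_2^p$. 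In particular $\rg\bigl(T_{23}(t_0)\bigr)\subset X$, so $B$ is $p$-admissible.

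For the converse, the range condition $\rg\bigl(T_{23}(t)\bigr)\subset X$ for every $t\ge 0$ and the uniform bound $\|\Bt\|_{\sL(E_2^p,X)}\le M_B$ read off directly from Lemma~\ref{glmadmB}, so $\Bt\in\sL(E_2^p,X)$ uniformly in $t$. The real obstacle is strong continuity in $X$: Lemma~\ref{lem:t_23-welldef} only supplies continuity in the weaker space $X_{-1}$, and continuity in $X$ does not follow automatically from this. I would close the gap by a density argument, first verifying continuity on the dense subspace $\WpU\subset E_2^p$.

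For $u\in\WpU$, the integration by parts identity already derived in Remark~\ref{rem-admiss-B} gives
$$\Bt u=T(t)\,A_{-1}^{-1}Bu(0)-A_{-1}^{-1}Bu(t)+A_{-1}^{-1}\int_0^t T_{-1}(t-r)Bu'(r)\ud r,$$
and I would check that each summand depends continuously on $t$ with values in $X$: the first because $\Tt$ is a $C_0$-semigroup on $X$ and $A_{-1}^{-1}Bu(0)\in X$; the second because $u\in C([0,\infty),U)$ by Sobolev embedding and $A_{-1}^{-1}B\in\sL(U,X)$; the third because $A_{-1}^{-1}:X_{-1}\to X$ is bounded while the integrand is continuous in $X_{-1}$ by Lemma~\ref{lem:t_23-welldef} applied to $u'\in E_2^p$. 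The uniform bound $\sup_t\|\Bt\|\le M_B$, combined with density of $\WpU$ in $E_2^p$, then yields strong continuity of $\Btt$ on all of $E_2^p$ by a standard approximation (cf.\ \cite[Lem.~I.5.2]{EN}), completing the proof.
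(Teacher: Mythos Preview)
Your proof is correct. The first implication and the uniform-boundedness part of the converse match the paper almost verbatim. The only genuine difference is in the strong continuity argument.

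The paper exploits the translation structure of the convolution directly: for $0\le r\le t$ one has the identity $\B(r)u=\Bt u_{t-r}$, where $u_{t-r}$ is the right-shift of $u$ defined in \eqref{eq:u_t}. Hence
\[
\bigl\|\Bt u-\B(r)u\bigr\|_X=\bigl\|\Bt(u-u_{t-r})\bigr\|_X\le M_B\,\|u-u_{t-r}\|_{\LpRpU},
\]
and strong continuity of the right-shift semigroup on $\LpRpU$ finishes the argument at once, for \emph{every} $u\in E_2^p$. No density step is needed. Your route---integration by parts on $\WpU$ to get continuity on a dense set, then the standard approximation via the uniform bound $M_B$---is equally valid and perhaps more in the spirit of the parallel argument for $T_{12}(t)$ in Corollary~\ref{admC}. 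The paper's approach is a bit slicker here because the convolution structure of $T_{23}(t)$ makes the translation identity available; it trades your three-term decomposition for a single estimate.
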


\begin{proof}
If $\A$ is the generator of a $C_0$-semigroup $\sTtt$, then by Lemma~\ref{laplB} and the uniqueness of the Laplace transform %(see \cite[Prop.~1.7.3]{AB})
 we obtain that $T_{23}(t)u=\bigl[\sTt\bigr]_{23}u\in X$ for all $t\ge0$ and $u\in E_2^p$. This implies $\rg(T_{23})(t)\subset X$, thus $B$ is $p$-admissible and $T_{23}\in\sL(E_2^p,X)$.

Conversely, if $B$ is a $p$-admissible control operator, then using Remark~\ref{rem-admiss-B} and Lemma~\ref{glmadmB} we conclude that $\rg\bigl(T_{23}(t)\bigr)\subset X$, hence by the closed graph theorem $\Bt\in\sL(E_2^p,X)$ for every $t \ge 0$.
To show that $\Btt$ is strongly continuous
let $0 \leq r \leq t$ and $u \in \LpRpU$.  Then
\begin{align*}
\left\|\Bt u - \B(r) u \right\|_X & = \left\|\Bt\left(u - u_{t-r}\right) \right\|_X \\
	& \leq \left\|\Bt \right\|\cdot \left\|u-u_{t-r}\right\|_{\LpRpU} \\
	& \leq M_B \left\|u-u_{t-r}\right\|_{\LpRpU},
\end{align*}
where $u_{t-r}$ is defined as in \eqref{eq:u_t}. Since the shift on $\LpRpU$ is strongly continuous, we have
\[
\lim_{|t-r|\rightarrow 0}  \left\|u-u_{t-r}\right\|_{\LpRpU} = 0
\]
and the assertion follows.
\end{proof}

\subsection{The Entry \boldmath{$T_{13}(t)$} and Admissible Pairs of Operators}
\label{T13}

We proceed as in the previous two subsections and start by defining for $t\ge0$ the operators\footnote{We use the notation $T_{13}(t)=T^D_{13}(t)$ to indicate the dependence of this entry on $D\in\sL(U,Y)$.}
\begin{align*}
&T^D_{13}(t):\WzC\subset E_2^p\to E_1^p,\\
\bigl(&T^D_{13}(t)u\bigr)(s):=\1 _{[-t,0]}(s)\biggl( C_L \int_0^{t+s}\!\!\!\!\!T_{-1}(t+s-r) B u(r)\ud r
+ D u(t+s)\biggr)
,\ s\in\R_-,
\end{align*}
where
\[\WzC := \Bigl\{u \in \WzpU: u(0)=u'(0) = 0\Bigr\}.\]

As before we first verify some basic properties of this operator family.

\begin{lemma}\label{lem:t_13-welldef}
For every $u\in \WzC$ the function $T_{13}^D(\cd)u:\R_+\to E_1^p$ is well-defined, continuous and bounded.
\end{lemma}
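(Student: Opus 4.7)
The plan is to exploit the boundary conditions $u(0)=u'(0)=0$ built into $\WzC$ via two integrations by parts in the convolution integral, which turns the a priori $X_{-1}$-valued expression into a sum of terms that manifestly lie in $D(C_L)$ and admit clean pointwise and $\L^p$ estimates. The only delicate point is the role of \emph{compatibility} (Assumption~\ref{assum-reg}(ii)), which is precisely what makes the surviving boundary-value contribution $A_{-1}^{-1}Bu(t)$ land in $D(C_L)$; without it, this term would sit in $X$ but not in $D(C_L)$, and $C_L$ could not be applied even after the double integration by parts.

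Applying the identity from Remark~\ref{rem-admiss-B} twice, first with $u$ (using $u(0)=0$) and then with $u'$ (using $u'(0)=0$), together with the commutation of $A_{-1}^{-1}$ with $T_{-1}$, one obtains in $X_{-1}$
\[
\int_0^t T_{-1}(t-r)Bu(r)\ud r = -A_{-1}^{-1}Bu(t) - A_{-1}^{-2}Bu'(t) + A^{-1}\!\int_0^t T(t-r)A_{-1}^{-1}Bu''(r)\ud r.
\]
By compatibility, $A_{-1}^{-1}B\in\sL\bigl(U,D(C_L)\bigr)$, so the first summand lies in $D(C_L)$; each of the two remaining summands carries an extra $A^{-1}$ which maps $D(C_L)\subset X$ into $D(A)=X_1\hookrightarrow D(C_L)$, so they lie in $X_1$. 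Setting
\[
\Delta := C_L A_{-1}^{-1}B,\qquad \tilde\Delta := CA^{-1}A_{-1}^{-1}B,\qquad \Gamma(r) := CA^{-1}T(r)A_{-1}^{-1}B,
\]
all of which belong to $\sL(U,Y)$ (with boundedness of $\Delta$ coming from \eqref{eq:Delta-bdd}), and applying $C_L$ termwise (noting $C_L$ coincides with $C$ on $X_1$), I arrive at the representation
\[
\bigl(T^D_{13}(t)u\bigr)(s) = \1_{[-t,0]}(s)\,F(t+s),\qquad F(\sigma):= -\Delta u(\sigma)-\tilde\Delta u'(\sigma)+(\Gamma*u'')(\sigma)+Du(\sigma),
\]
with $(\Gamma*u'')(\sigma):=\int_0^\sigma\Gamma(\sigma-r)u''(r)\ud r$ the causal convolution on $\R_+$.

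For boundedness, the change of variable $\sigma=t+s$ yields $\|T^D_{13}(t)u\|_{E_1^p}^p=\int_0^t\|F(\sigma)\|_Y^p\ud\sigma\le\|F\|_{\L^p(\R_+,Y)}^p$. Exponential stability (Assumption~\ref{assum-reg}(i)) gives $\|\Gamma(r)\|\le K\,\|CA^{-1}\|\,\|A_{-1}^{-1}B\|_{\sL(U,X)}\,e^{\omega r}$, hence $\Gamma\in\L^1(\R_+,\sL(U,Y))$, and Young's inequality yields $\|\Gamma*u''\|_{\L^p(\R_+,Y)}\le\|\Gamma\|_{\L^1}\|u''\|_{\L^p}$; the remaining three summands of $F$ are controlled by $\|u\|_{\L^p}$ and $\|u'\|_{\L^p}$, all finite since $u\in\WzpU$. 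For continuity, I mimic the splitting in Lemma~\ref{lem:t_12-welldef}: for $0\le r\le t$ the difference $T^D_{13}(t)u-T^D_{13}(r)u$ decomposes into a contribution over $(-t,-r]$ whose $E_1^p$-norm equals $\bigl(\int_0^{t-r}\|F(\sigma)\|_Y^p\ud\sigma\bigr)^{1/p}\to 0$ by absolute continuity of the Lebesgue integral, and a contribution over $(-r,0]$ whose $E_1^p$-norm equals the $\L^p([0,r],Y)$-norm of $\sigma\mapsto F(\sigma+(t-r))-F(\sigma)$, which tends to $0$ by strong continuity of translations on $\L^p(\R_+,Y)$ since $F\in\L^p(\R_+,Y)$.
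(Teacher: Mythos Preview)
Your proof is correct and follows essentially the same approach as the paper: the key identity obtained by two integrations by parts (which is exactly the paper's equation \eqref{intxp}), the use of compatibility to place $A_{-1}^{-1}Bu$ in $D(C_L)$, and Young's inequality for the convolution term all appear in both arguments. The only difference is organizational---you package the four contributions into a single function $F\in\L^p(\R_+,Y)$ and prove boundedness and continuity once via translation continuity, whereas the paper treats the $D$-term and the three terms $g_1,g_2,g_3$ separately; your version is slightly more economical but not substantively different.
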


\begin{proof} 
We first consider the term involving $D\in\sL(U,Y)$. For this it suffices to look at the function $g:\R_+\to E_1^p$ defined by $g(t):=\1 _{[-t,0]}(\cd)v(t+\cd)$ where $v:=Du\in\WzCY$.  Since
\[
\|g(t)\|_{E_1^p}=\int_{-t}^{0}\bigl\|v(t+s)\bigr\|^p_Y\ud s\le\|v\|_{\LpRpY}\quad\text{for all }t\ge0
\]
$g$ is well-defined and bounded. To show its continuity take $0\le r\le t$. Then
\begin{align*}
\bigl\|g(t)-g(r)\bigr\|_{E_1^p}
&=\bigl\|\1_{[-t,0]} (\cd)v(t+\cd)-\1_{[-r,0]} (\cd)v(r+\cd)\bigr\|_{\LpRpY}\\
&\le\biggl(\int_{-t}^{-r}\bigl\|v(t+s)\bigr\|_Y^p\ud s\biggr)^{\frac1p}
+\biggl(\int_{-r}^{0}\bigl\|v(t+s)-v(r+s)\bigr\|_Y^p\ud s\biggr)^{\frac1p}
\\
&= \biggl(\int_{0}^{t-r}\|v(s)\|_Y^p\ud s\biggr)^{\frac1p}+\biggl(\int_{0}^{r}\bigl\|v(t-r+s)-v(s)\bigr\|_Y^p\ud s\biggr)^{\frac1p}
\\&
\to0\quad\text{as $t-r\to 0$},
\end{align*}
where the convergence of the first term follows from the dominated convergence theorem while the second term converges due to the strong continuity of the left-shift semigroup on $\LpRpY$.

Next we consider $T_{13}^0(\cd)u$.
Note that for $u\in\WzC$ using twice integration by parts and the compatibility condition \eqref{eq:Delta-bdd} it follows that for $t+s \ge 0$
\begin{align}\nonumber
\int_0^{t+s}\!\!\!\!\!T_{-1}(t+s&-r) B u(r)\ud r\\\notag
&=-A^{-1}_{-1}\biggl(Bu(t+s) - \int_0^{t+s}\!\!\!\!\! T_{-1}(t+s-r) B u' (r)\ud r\biggr)\\
&=-A^{-1}_{-1}Bu(t+s) -A_{-1}^{-2}Bu'(t+s)+A^{-1}\int_0^{t+s}\!\!\!\!\! T(t+s-r) A_{-1}^{-1} B u''(r)\ud r \label{intxp}
\in D(C_L).
\end{align}
Hence for all $s\in\R_-$ and $t\in\R_+$ the term $\bigl(T^0_{13}(t)u\bigr)(s)$ is well-defined. Moreover, by the same argument as before for the function $g$ it follows that the functions
\begin{align*}
&g_1:\R_+\to E_1^p,&&\kern-60pt g_1(t):=\1 _{[-t,0]}(\cd)C_LA^{-1}_{-1}Bu(t+\cd),\\
&g_2:\R_+\to E_1^p,&&\kern-60pt g_2(t):=\1 _{[-t,0]}(\cd)C_LA^{-2}_{-1}Bu'(t+\cd)
\end{align*}
are bounded and continuous. Since $C_LA^{-1}$ is bounded, to finish the proof it suffices to prove that for $v:=A_{-1}^{-1} B u''\in\LpRpX$ the function
\[
g_3:\R_+\to\LpRmX,\qquad g_3(t):= \1 _{[-t,0]}(\cd)\int_0^{t+\cd}\!\!\!\!\! T(t+\cd-r) v(r)\ud r
\]
is well-defined, continuous and bounded. Applying Young's inequality (see \cite[Prop.1.3.5.(a)]{AB}) to the convolution $T*v$ we get
\begin{align}
\|g_3(t)\|_{\LpRmX}\notag
&=\Bigl(\int_{-t}^0\Bigl\|\int_0^{t+s}\!\!\!\!\! T(t+s-r) v(r)\ud r\Bigr\|^p_X\ud s \Bigr)^{\frac{1}{p}}\\\notag
&= \Bigl( \int_{0}^t\Bigl\|\int_0^{s} T(s-r) v(r)\ud r\Bigr\|^p_X\ud s \Bigr)^{\frac{1}{p}}\\\notag
&\le\bigl\|T*v\bigr\|_{\LpRpX}\\\label{eq:est-conv}
&\le \bigl\| \|T(\cd)\|_{\sL(X)}\bigr\|_{\LeRp}\cdot \|v\|_{\LpRpX}<+\infty,
\end{align}
where in the last step we used \eqref{eq:T(t)-stab}.
This proves that $g_3$ is well-defined and bounded. To show its continuity take  $0\le t_0\le t_1$. Then for $h:=t_1-t_0$ we obtain
\begin{align*}
\|g_3(t_1)-g_3(t_0)\|_{\LpRmX}^p
&=\int_{-t_1}^{-t_0}\Bigl\|\int_0^{t_1+s}\!\!\!\!\! T(t_1+s-r) v(r)\ud r\Bigr\|^p_X\ud s\\
&\qquad  +\int_{-t_0}^{0}\Bigl\|\int_0^{t_1+s}\!\!\!\!\! T(t_1+s-r) v(r)\ud r
                                           -\int_0^{t_0+s}\!\!\!\!\! T(t_0+s-r) v(r)\ud r
\Bigr\|^p_X\ud s\\
&=\int_{0}^{h}\Bigl\|\int_0^{s} T(s-r) v(r)\ud r\Bigr\|^p_X\ud s\\
&\qquad+\int^{t_0}_{0}\Bigl\|\int_0^{s+h}\!\!\! T(s+h-r) v(r)\ud r
                                           -\int_0^{s}T(s-r) v(r)\ud r
\Bigr\|^p_X\ud s\\
&=:L_1^p+L_2^p.
\end{align*}
Moreover, by \cite[Prop.~1.3.4]{AB} the convolution $T*v:\R_+\to X$ is continuous, hence it is uniformly continuous on the compact interval $[0,t_0]$. Thus
\begin{align}\label{eq:est-conv-L2}
L_2^p=\int_0^{t_0}\bigl\|(T*v)(s+h)-(T*v)(s)\bigr\|^p_X\to0\qquad\text{as }h=t_1-t_0\to0.
\end{align}
Furthermore, using the dominated convergence theorem
\begin{align}\label{eq:est-conv-L1}
L_1^p&\to0\qquad\text{as }h=t_1-t_0\to0.
\end{align}
Summing up \eqref{eq:est-conv-L2} and \eqref{eq:est-conv-L1} we complete the proof.
\end{proof}

By the previous result we can Laplace transform $T_{23}(\cd)u$ for $u\in\WzC$. To do so we need the following simple result.

\begin{lemma}\label{lem:L-trafo-conv}
Let $v\in\LpRpX$. Then the convolution $f:=T*v$ is bounded and continuous. Hence for $\Re\lambda>0$ its Laplace transform exists and is given by
\begin{equation*}
\hat f(\lambda)=\RlA\hat v(\lambda).
\end{equation*}
If, in addition, $v\in\LeRpX$ then the same holds for $\Re\lambda\ge0$.
\end{lemma}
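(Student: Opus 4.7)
The plan is to follow the same convolution tools already used in the proof of Lemma~\ref{lem:t_13-welldef}, namely Young's inequality (\cite[Prop.~1.3.5]{AB}) and the continuity of convolutions (\cite[Prop.~1.3.4]{AB}), combined with Fubini's theorem to justify the interchange of integrals that produces the product $\RlA\hat v(\lambda)$.

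First I would establish boundedness and continuity of $f=T*v$. Because of Assumption~\ref{assum-reg}.(i) the map $s\mapsto\|T(s)\|_{\sL(X)}$ lies in $\LeRp\cap\L^\infty[0,\infty)$ and hence, by interpolation, in every $\L^q[0,\infty)$. Letting $q$ be the conjugate exponent of $p$ (with the obvious modification $q=\infty$ when $p=1$), Hölder's inequality gives
\begin{equation*}
\bigl\|f(t)\bigr\|_X
\le\int_0^t\bigl\|T(t-r)\bigr\|_{\sL(X)}\bigl\|v(r)\bigr\|_X\ud r
\le\bigl\|\,\|T(\cd)\|_{\sL(X)}\bigr\|_{\L^q[0,\infty)}\cdot\|v\|_{\LpRpX}
\end{equation*}
uniformly in $t\ge0$, proving boundedness. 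Continuity then follows from \cite[Prop.~1.3.4]{AB}, which was already invoked in Lemma~\ref{lem:t_13-welldef}.

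Next I would compute the Laplace transform. Fix $\lambda\in\C$ with $\Re\lambda>0$. Since the double integral
\begin{equation*}
\int_0^\infty\!\!\int_0^\infty e^{-\Re\lambda\,(s+r)}\bigl\|T(s)\bigr\|_{\sL(X)}\bigl\|v(r)\bigr\|_X\ud s\ud r
=\bigl\|\,\|T(\cd)\|_{\sL(X)}\bigr\|_{\L^1[0,\infty),\,\Re\lambda}\cdot\bigl\|e^{-\Re\lambda\cd}v\bigr\|_{\LeRpX}
\end{equation*}
is finite (the first factor by exponential stability, the second by Hölder applied to $e^{-\Re\lambda\cd}\in\L^q$ and $v\in\L^p$), Fubini's theorem applies and, after the substitution $t=s+r$, yields
\begin{equation*}
\hat f(\lambda)
=\int_0^\infty\!\!\int_0^\infty e^{-\lambda(s+r)} T(s) v(r)\ud s\ud r
=\int_0^\infty e^{-\lambda s}T(s)\ud s\cdot\int_0^\infty e^{-\lambda r}v(r)\ud r
=\RlA\hat v(\lambda),
\end{equation*}
where in the last equality I use that $\RlA=\int_0^\infty e^{-\lambda s}T(s)\ud s$ for every $\lambda$ with $\Re\lambda>\omega$ (cf.\ \cite[Sect.~II.1]{EN}), which includes the half-plane $\Re\lambda>0$ since $\omega<0$.

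Finally, for the addendum, assume in addition $v\in\LeRpX$ and let $\Re\lambda\ge0$. Young's inequality gives $f=T*v\in\LeRpX$ with $\|f\|_{\LeRpX}\le\|\,\|T(\cd)\|_{\sL(X)}\|_{\LeRp}\,\|v\|_{\LeRpX}$, and boundedness and continuity follow exactly as before (now using $q=\infty$ in Hölder and noting that $\|T(\cd)\|\in\L^\infty$). Since both $e^{-\lambda\cd}T(\cd)$ and $e^{-\lambda\cd}v$ are integrable on $[0,\infty)$ (the first by exponential stability, the second by hypothesis), the same Fubini argument goes through verbatim on the closed half-plane $\Re\lambda\ge0$, yielding $\hat f(\lambda)=\RlA\hat v(\lambda)$ there as well. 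The only delicate point is checking integrability for Fubini, but the exponential stability $\omega<0$ makes this automatic, so I do not expect any genuine obstacle.
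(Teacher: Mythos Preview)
Your proof is correct and follows essentially the same route as the paper: establish boundedness via a H\"older/Young-type convolution estimate, continuity from \cite[Prop.~1.3.4]{AB}, then verify absolute integrability of the iterated integral (using exponential stability for the semigroup factor and H\"older for the $v$-factor) so that Fubini and the substitution $t=s+r$ yield the product $\RlA\hat v(\lambda)$; the $\L^1$ addendum is handled identically, with Young giving $f\in\LeRpX$ and the Fubini estimate surviving at $\Re\lambda=0$. The paper's version is slightly terser (referring back to \eqref{eq:est-conv} for boundedness and writing out the Fubini bound as \eqref{eq:fubini-Re0}), but the ingredients and their order are the same.
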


\begin{proof} Boundedness of $f$ follows as in \eqref{eq:est-conv} while its continuity is shown in \cite[Prop.~1.3.4]{AB}. Now take $\Re\lambda>0$. Using Assumption~\ref{assum-reg}.(i) the integral
\begin{align}\label{eq:fubini-Re0}
\int_0^{+\infty}\!\int_0^{+\infty}\!e^{-\Re\lambda(t+r)} \bigl\|T(t)v(r)\bigr \|\ud t\ud r\le
K\int_0^{+\infty}\!\int_0^{+\infty}\!e^{-\Re\lambda(t+r)}e^{\omega t} \bigl\|v(r)\bigr \|\ud t\ud r<+\infty
\end{align}
is finite. Hence we can use Fubini's theorem (see \cite[Thm.~1.1.9]{AB}) to conclude that
\begin{align*}
\hat f(\lambda)
&=\int_0^{+\infty}e^{-\lambda t}\int_0^t T(t-r)v(r)\ud r\ud t\\
&=\int_0^{+\infty}\int_r^{+\infty} e^{-\lambda t} T(t-r)v(r)\ud t\ud r\\
&=\int_0^{+\infty}\int_0^{+\infty} e^{-\lambda (t+r)} T(t)v(r)\ud t\ud r\\
&=\RlA\hat v(\lambda).
\end{align*}
Now assume that $v\in\LeRpX$. Then by Young's inequality \cite[Prop.1.3.5.(a)]{AB} we obtain $f\in\LeRpX$.
Hence \eqref{eq:fubini-Re0} still holds for $\Re\lambda=0$ and the claim follows as before.
\end{proof}

\begin{lemma}\label{laplBC}
For every $u\in \WzC$ and $\lambda \in \C$ with $\Re \lambda > 0$ we have
\begin{equation}\label{eq:T_13}
\LT\bigl(T^D_{13}(\cd)u\bigr)(\lambda)=\el\otimes C_L\RlAme B\hat u(\lambda)+\el\otimes D\hat u(\lambda)=[\RlsA]_{13}u.
\end{equation}
\end{lemma}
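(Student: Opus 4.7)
The plan is to split $T^D_{13}(t)u$ into its two natural summands (the one involving $D$ and the one involving $B$) and Laplace-transform each separately. For the $D$-contribution, fixing $s\in\R_-$ and substituting $\tau=t+s$ yields
\[
\int_0^\infty e^{-\lambda t}\1_{[-t,0]}(s)Du(t+s)\ud t=e^{\lambda s}\int_0^\infty e^{-\lambda\tau}Du(\tau)\ud\tau=e^{\lambda s}D\hat u(\lambda),
\]
which already reproduces the second summand in \eqref{eq:T_13}.

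For the $B$-contribution, the same substitution reduces the task to establishing
\[
\int_0^\infty e^{-\lambda\tau}C_Lg(\tau)\ud\tau=C_L\RlAme B\hat u(\lambda),\qquad g(\tau):=\int_0^\tau T_{-1}(\tau-r)Bu(r)\ud r.
\]
Here I would exploit the representation \eqref{intxp}: since $u\in\WzC$ satisfies $u(0)=u'(0)=0$, two integrations by parts give
\[
g(\tau)=-A_{-1}^{-1}Bu(\tau)-A_{-1}^{-2}Bu'(\tau)+A^{-1}\int_0^\tau T(\tau-r)A_{-1}^{-1}Bu''(r)\ud r,
\]
where each summand lies in $D(C_L)$ thanks to Assumption~\ref{assum-reg}.(ii) and the continuous inclusions of Proposition~\ref{D(C_L)-Ban}. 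Consequently $C_L$ can be applied termwise to produce a $Y$-valued function.

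I would then Laplace-transform each of these three $Y$-valued terms. The first two give $-C_LA_{-1}^{-1}B\,\hat u(\lambda)$ and $-\lambda C_LA_{-1}^{-2}B\,\hat u(\lambda)$, using the standard identities $\widehat{u'}(\lambda)=\lambda\hat u(\lambda)$ and $\widehat{u''}(\lambda)=\lambda^2\hat u(\lambda)$ (obtained from integration by parts together with $u(0)=u'(0)=0$). For the convolution term I would apply Lemma~\ref{lem:L-trafo-conv} with $v:=A_{-1}^{-1}Bu''\in\LpRpX$ and pull the bounded operator $C_LA^{-1}\in\sL(X,Y)$ outside the Laplace integral, obtaining $\lambda^2 C_LA^{-1}R(\lambda,A)A_{-1}^{-1}B\,\hat u(\lambda)$.

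The remaining step, and the one I expect to be the main obstacle, is the algebraic identity
\[
-C_LA_{-1}^{-1}B-\lambda C_LA_{-1}^{-2}B+\lambda^2C_LA^{-1}R(\lambda,A)A_{-1}^{-1}B=C_L\RlAme B.
\]
This I would verify by factoring $C_LA_{-1}^{-2}$ on the left and multiplying on the right by $(\lambda-A_{-1})$: the identity reduces to the trivial relation $(-A_{-1}-\lambda)(\lambda-A_{-1})+\lambda^2=A_{-1}^2$ in $\sL(X_{-1})$. Beyond this bookkeeping, the delicate point of the whole argument is to keep track of the spaces in which each composition ($C_LA^{-1}$, $C_LA_{-1}^{-k}B$, $C_L\RlAme B$) is legitimately bounded—which is precisely what Assumption~\ref{assum-reg}.(ii) and \eqref{eq:Delta-bdd} guarantee—so that the termwise Laplace-transform manipulations and the resolvent identity all make sense.
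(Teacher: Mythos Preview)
Your proposal is correct and follows essentially the same route as the paper's proof: the same splitting into the $D$- and $B$-contributions, the same use of the representation \eqref{intxp}, Lemma~\ref{lem:L-trafo-conv}, and the derivative rule $\widehat{u'}(\lambda)=\lambda\hat u(\lambda)$ (valid since $u(0)=u'(0)=0$), followed by the same resolvent algebra to collapse the three terms to $C_L\RlAme B\hat u(\lambda)$. One small remark: the identity $(-A_{-1}-\lambda)(\lambda-A_{-1})+\lambda^2=A_{-1}^2$ does not live in $\sL(X_{-1})$ since $A_{-1}$ is unbounded there; it is an operator identity on $D(A_{-1}^2)$ (or, equivalently, a polynomial identity that becomes bounded only after composing with $A_{-1}^{-2}$), but this does not affect your argument.
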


\begin{proof}
Let $u\in \WzC$ and $\Re \lambda > 0$. Then for $s \in (-\infty, 0]$ we get  using \cite[Thm.~4.2]{Nei:81}
\begin{align*}
\Bigl(\LT\bigl(T^D_{13}(\cd)u\bigr)(\lambda)\Bigr)(s) &= \int_0^{\infty} e^{-\lambda t} \1_{[-t,0]}(s) C_L \int_0^{t+s} T_{-1}(t+s-r) B u(r) \ud r\ud t\\
 &\qquad+ \int_0^{\infty} e^{-\lambda t} \1_{[-t,0]}(s) Du(t+s) \ud t
 =:L_1+L_2.
\end{align*}
We compute the two terms of the sum separately.
For $L_2$ we obtain
\begin{align*}
L_2 &= \int_{-s}^{\infty} e^{-\lambda t} D u(t+s) \ud t
 = e^{\lambda s} D \int_0^{\infty} e^{\lambda t} u(t) \ud r
 =\bigl(\el\otimes D\hat u(\lambda)\bigr)(s).
\end{align*}
Using \eqref{intxp} (for $s=0$), Lemma~\ref{lem:L-trafo-conv} and
\cite[Cor.~1.6.6]{AB}, which states that $\widehat{v'}(\lambda)=\lambda\hat v(\lambda)-v(0)$ for $v\in\WpX$, for the first term we obtain
\begin{align*}
L_1
	&= \int_{-s}^{\infty} e^{-\lambda t} C_L \int_0^{t+s} T_{-1}(t+s-r) Bu(r) \ud r\ud t \\
	&= \int_0^{\infty} e^{-\lambda (t-s)} C_L \int_0^{t} T_{-1}(t-r) Bu(r) \ud r\ud t\\
	&= e^{\lambda s}\int_0^{\infty} e^{-\lambda t}\biggl(-C_L A_{-1}^{-1}Bu(t)
	- C_L A_{-1}^{-2} B u'(t)
     + C_LA^{-1}\int_0^t T(t-r) A_{-1}^{-1}B u''(r) \ud r\biggr)\ud t \\
	&=e^{\lambda s}C_LA^{-1}\Bigl(-Id-\lambda A_{-1}^{-1}+\lambda^2\RlA A^{-1}_{-1}\Bigr)B\hat u(\lambda)\\
&=\el(s) C_L\RlAme B\hat u(\lambda).
\end{align*}
Summing up this gives \eqref{eq:T_13} and completes the proof.
\end{proof}

We proceed by introducing the following  notion closely related to the entry $T^0_{13}(t)$.

\begin{definition}
The pair $(B,C)\in\sL(U,X_{-1})\times\sL(X_1,Y)$ is called \emph{$p$-admissible} (with respect to $A$) if
there exists $t_0>0$ and a constant $M\ge 0$ such that
\begin{equation}\label{est-adm-BC}
\int_0^{t_0}\biggl\|C_L\int_0^{s}T_{-1}(s-r)Bu(r)\ud r\biggr\|_Y^p \ud s\leq  M\|u\|_{\LpRpU}^p
\quad\text{for all $u\in \WzC$.}
\end{equation}
\end{definition}

\begin{remark}\label{rem-admiss-(BC)}
Note that, since for $t\ge0$ and $u\in\WzC$ we have
\begin{align*}
\|T_{13}^0(t)u\|_{E_1^p}^p
=\int_{-t}^{0}\Bigl\|C_L \int_0^{t+s}\!\!\!\!\!T_{-1}(t+s-r) B u(r)\ud r\Bigr\|^p_Y\ud s
=\int_{0}^{t}\Bigl\|C_L \int_0^{s}T_{-1}(s-r) B u(r)\ud r\Bigr\|^p_Y\ud s
\end{align*}
the pair $(B,C)$ is $p$-admissible if and only if $T_{13}^0(t):\WzC\subset X\to E_1^p$ has a (since $\WzC$ is dense in $E_2^p$ necessarily unique) extension in $\sL(E_2^p,E_1^p)$ for some $t>0$. Since $D\in\sL(U,Y)$ this again is equivalent to the fact that $T_{13}^D(t):\WzC\subset X\to E_1^p$ has a bounded extension for some $t>0$.
\end{remark}

Recall that we assume the semigroup $\Tt$ to be exponentially stable. This implies the following result which is analogous to Lemmas~\ref{Ctuttit} and \ref{glmadmB}, and closely related to \cite[Thm.2.5.4.(ii)]{Sta:05} \cite[Prop.~2.1]{reghilbert}.

\begin{lemma}\label{tuttiBC}
Let $B \in \sL(U,X_{-1})$ be a $p$-admissible control operator and $C \in \sL(X_1, Y)$ be a $p$-admissible observation operator. Then the pair $(B,C)$ is $p$-admissible if and only if there exists $M_{BC}\ge0$ such that
\begin{equation}\label{eq:tuttiBC}
\int_0^{t}\biggl\|C_L\int_0^{s}T_{-1}(s-r)Bu(r)\ud r\biggr\|_Y^p \ud s\leq  M_{BC}\|u\|^p_{\LpRpU}
\end{equation}
for all  $u \in \WzC$, $t \ge 0$.
\end{lemma}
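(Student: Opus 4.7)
The ``if'' direction is trivial: specializing $t=t_0$ in \eqref{eq:tuttiBC} recovers \eqref{est-adm-BC} with $M:=M_{BC}$. For the converse my plan is to follow the scheme used in Lemmas~\ref{Ctuttit} and~\ref{glmadmB}: assuming \eqref{est-adm-BC} with constants $t_0>0$ and $M$, the estimate \eqref{eq:tuttiBC} is immediate for $t\le t_0$ by monotonicity, so I would fix $t>t_0$, write $t=nt_0+s$ with $n\in\N$ and $s\in[0,t_0)$, and split $\int_0^t$ into the $n+1$ subintervals $[kt_0,(k+1)t_0]$ (the last truncated at $t$).

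On each block, with $\rho:=\sigma-kt_0$, the semigroup property of $T_{-1}$ yields the decomposition
\begin{equation*}
\int_0^\sigma T_{-1}(\sigma-r)Bu(r)\ud r = T(\rho)\,y_k + \int_0^\rho T_{-1}(\rho-r')Bu_k(r')\ud r',
\end{equation*}
where $y_k:=\int_0^{kt_0}T_{-1}(kt_0-r)Bu(r)\ud r\in X$ by $p$-admissibility of $B$ (Lemma~\ref{glmadmB}) and $u_k(r):=u(r+kt_0)\,\1_{[0,t_0]}(r)$. After applying $C_L$ and using $(a+b)^p\le 2^{p-1}(a^p+b^p)$, I would bound the contribution of each block by a ``fresh'' and a ``history'' term. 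The fresh one, $\int_0^{t_0}\bigl\|C_L\int_0^\rho T_{-1}(\rho-r')Bu_k(r')\ud r'\bigr\|_Y^p\ud\rho$, is controlled via \eqref{est-adm-BC} (extended by density to $\LpRpU$) by $M\|u_k\|^p_{\LpRpU}=M\int_{kt_0}^{(k+1)t_0}\|u(r)\|_U^p\ud r$, which telescopes to $\le M\|u\|^p_{\LpRpU}$ when summed over $k$. The history one, $\int_0^{t_0}\|C_LT(\rho)y_k\|_Y^p\ud\rho$, is handled through the extension $\Ct\in\sL(X,E_1^p)$ from Corollary~\ref{admC} together with Lemma~\ref{Ctuttit}, giving $\le M_C\|y_k\|_X^p$.

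It remains to bound $\sum_k\|y_k\|_X^p$ uniformly in $n$, and this is the only delicate step. For it I would use the telescoping identity
\begin{equation*}
y_k=\sum_{j=0}^{k-1}T\bigl((k-j-1)t_0\bigr)\,w_j,\qquad w_j:=\int_{jt_0}^{(j+1)t_0}T_{-1}\bigl((j+1)t_0-r\bigr)Bu(r)\ud r,
\end{equation*}
so that $\|y_k\|_X$ is dominated by the discrete convolution $(a*b)_{k-1}$ with $a_j:=Ke^{\omega jt_0}\in\ell^1$ (summable by \eqref{eq:T(t)-stab} and $\omega<0$) and $b_j:=\|w_j\|_X$, while Lemma~\ref{glmadmB} applied slicewise gives $\|b\|_{\ell^p}\le M_B\|u\|_{\LpRpU}$. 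Young's inequality for discrete convolutions then produces $\sum_k\|y_k\|_X^p\le\bigl(KM_B/(1-e^{\omega t_0})\bigr)^p\|u\|^p_{\LpRpU}$, which combined with the fresh-part estimate yields \eqref{eq:tuttiBC} with an explicit constant $M_{BC}$. The main subtle point to verify along the way is that $C_L$ is evaluated on $T(\rho)y_k$ with only $y_k\in X$ (not in $D(A)$); this is precisely what the extended family $\Ct\in\sL(X,E_1^p)$ from Corollary~\ref{admC} is designed to license.
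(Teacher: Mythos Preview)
Your proposal is correct and follows essentially the same strategy as the paper: decompose $[0,t]$ into blocks of length $t_0$, on each block split the convolution into a ``fresh'' part (handled by $(B,C)$-admissibility) and a ``history'' part $C_LT(\rho)y_k$ (handled by $C$-admissibility), and then control the accumulating history via a discrete convolution bound with the exponentially decaying weights $Ke^{\omega j t_0}$ and Young's inequality for sequences. The only cosmetic differences are that the paper works with $\|\cdot\|_{L^p}$ throughout (Minkowski) rather than your pointwise inequality $(a+b)^p\le 2^{p-1}(a^p+b^p)$, and it packages the final Young-inequality step as a single convolution of the coefficients $l_m$ with $\|P_m u\|_{\LpRpU}$ instead of isolating $\sum_k\|y_k\|_X^p$ first.
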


\begin{proof}
If the pair $(B,C)$ is $p$-admissible, then we can suppose without loss of generality that in \eqref{est-adm-BC} we have  $t_0=1$.
Then it is clear that \eqref{est-adm-BC} also holds for $t_0$ replaced by some $0 \leq t \leq 1$.
In particular it follows that for every $0 \leq t \leq 1 $ the operator $T_{13}^{0}(t)$ has a (unique) extension $\Ft\in\sL\bigl(\LpRpU,\LpRmY\bigr)$.

Now to prove \eqref{eq:tuttiBC} it suffices to show that it holds for every $t=n \in \N$. 
To this end we write
\begin{align}\notag
\left(\int_0^{n} \biggl\|C_L\int_0^{s}T_{-1}(s-r)Bu(r)\ud r\biggr\|_Y^p \ud s \right)^{\frac{1}{p}} &=  \left(\sum_{k=0}^{n-1}\int_k^{k+1}\biggl\|C_L\int_0^{s}T_{-1}(s-r)Bu(r)\ud r\biggr\|_Y^p \ud s\right)^{\frac{1}{p}} \\\label{eq:est-F(n)}
&\leq \sum_{k=0}^{n-1}\left(\int_0^{1}\biggl\|C_L\int_0^{s+k}T_{-1}(s+k-r)Bu(r)\ud r\biggr\|_Y^p \ud s\right)^{\frac{1}{p}}.
\end{align}
In order to proceed we first estimate the terms of the last sum.
\begin{align*}
\left(\int_0^{1}\biggl\|\right. & \left.C_L \int_0^{s+k}T_{-1}(s+k-r)Bu(r)\ud r\biggr\|_Y^p \ud s\right)^{\frac{1}{p}} \\
& = \left( \int_0^1 \biggl\|C_L \biggl( \sum_{m=0}^{k-1}\int_m^{m+1}T_{-1}(s+k-r)Bu(r)\ud r + \int_k^{s+k} T_{-1}(s+k-r)Bu(r)\ud r \biggr)\biggr\|_Y^p \ud s \right)^{\frac{1}{p}} \\
& \leq  \left(\int_0^1 \biggl\| C_L  T(s) \sum_{m=0}^{k-1}  T(k-m-1) \int_m^{m+1}T_{-1}(m+1-r)Bu(r)\ud r \biggr\|_Y^p \ud s\right)^{\frac{1}{p}}\\
&\qquad + \left(\int_0^1 \biggl\| C_L\int_0^{s} T_{-1}(s-r)Bu(r+k)\ud r \biggr\|_Y^p \ud s\right)^{\frac{1}{p}}=:L_1+L_2.
\end{align*}
We consider the two terms of this sum separately. To this end we define for $m \in \N$ the operators $P_m\in\sL\bigl(\LpRpU\bigr)$ by $(P_m u)(s) := \1_{[0,1]}(s) u(s+m)$ for $s \in [0,\infty)$. Then
\begin{equation*}
L_2
=\bigl\| \bF(1) P_k u \bigr\|_{\LpRmY}%
\leq  M \| P_k u \|_{\LpRpU},
\end{equation*}
where we used that the pair $(B,C)$ is $p$-admissible.
The first term of the sum can be estimated as
\begin{align*}
L_1
& \leq  M_C^{\frac{1}{p}}\, \biggl\| \sum_{m=0}^{k-1} T(k-m-1) \int_0^{1}T_{-1}(1-r)Bu(r+m)\ud r \biggr\|_X \\
& \leq   M_C^{{\frac{1}{p}}} K\sum_{m=0}^{k-1} e^{\omega(k-m-1)}\bigl\| \B(1)P_m u \bigr\|_X \\
& \leq M_C^{{\frac{1}{p}}}M_B K \sum_{m=0}^{k-1}  e^{\omega(k-m-1)} \|P_m u\|_{\LpRpU}.
\end{align*}
Here we used that $C$ is a $p$-admissible observation operator, the stability condition~\eqref{eq:T(t)-stab} and that $B$ is a $p$-admissible control operator.
Thus introducing the  notation
\[
l_m :=
\begin{cases}
M & \text{if }m=0 \\
M_C^{{\frac{1}{p}}}M_B K  e^{\omega(m-1)} & \text{if } 1 \leq m \leq n-1
\end{cases}
\]
we obtain that for $0\leq k \leq n-1$
\[
\left(\int_0^{1}\biggl\|C_L \int_0^{s+k}T_{-1}(s+k-r)Bu(r)\ud r\biggr\|_Y^p \ud s\right)^{\frac{1}{p}} \leq \sum_{m=0}^k l_{k-m} \|P_m u\|_{\LpRpU}.
\]
Summing up we obtain by \eqref{eq:est-F(n)} for arbitrary $n\in\N$ and $u\in\WzC$ that
\begin{align*}
\left(\int_0^{n} \biggl\|C_L\int_0^{s}T_{-1}(s-r)Bu(r)\ud r\biggr\|_Y^p \ud s \right)^{\frac{1}{p}}
&\le\sum_{k=0}^{n-1}\sum_{m=0}^k l_{k-m} \|P_m u\|_{\LpRpU}\\
&\leq \Bigl(\sum_{k=0}^{n-1} l_k\Bigr)\cdot\Bigl(\sum_{k=0}^{n-1}\|P_ku\|_{\LpRpU}^p\Bigr)^{\frac1p}\\
&\le\Bigl(M +  \frac{M_C^{\frac{1}{p}}M_B  K}{1-e^{\omega}}\Bigr)\cdot
\|u\|_{\LpRpU}\\
&=:M_{BC}\cdot\|u\|_{\LpRpU},
\end{align*}
where in the second estimate we used Young's inequality for the convolution of sequences.
\end{proof}

\begin{remark}\label{F}
If $B$ and $C$ are both $p$-admissible then by Lemma~\ref{tuttiBC} the pair $(B,C)$ is $p$-admissible if and only if the operator
\begin{align*}
&\bFt:  \WzC \subset \LpRpU \to \LpRpY,\\
(&\bFt u)(\cd) := C_L \int_0^\cd T_{-1}(\cd-r) B u(r) \ud r
\end{align*}
has a bounded extension in $\sL\bigl(\LpRpU,\LpRpY\bigr)$.
\end{remark}

Combining the previous results we obtain the main statement of this subsection.

\begin{corollary}\label{cor:BC-admiss}
If $\A$ is a generator, then $(B,C)$ is $p$-admissible. Conversely, if $B$, $C$ and the pair $(B,C)$ are all $p$-admissible, then for every $t\ge0$ the operator $T^D_{13}(t):\WzC\subset E_2^p\to E_1^p$ has a (unique) bounded extension $\Ft:=\overline{T^D_{13}(t)} \in \sL(E_2^p,E_1^p)$ and $\Ftt$ is strongly continuous.
\end{corollary}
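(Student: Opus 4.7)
The proof will mirror the pattern already established in Corollaries~\ref{admC} and~\ref{cor-addm-B}: combine the Laplace-transform identity from Lemma~\ref{laplBC} with the equivalence furnished by Remark~\ref{rem-admiss-(BC)} and the uniform-in-$t$ estimate of Lemma~\ref{tuttiBC}.

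For the first implication, suppose $\A$ generates a $C_0$-semigroup $\sTtt$ on $\Xp$. The lower-triangular structure of $\sTt$ forces $\bigl[\sTt\bigr]_{13}\in\sL(E_2^p,E_1^p)$. Lemma~\ref{laplBC} asserts that, for $\Re\lambda>0$, the Laplace transform of $T^D_{13}(\cd)u$ equals $[\RlsA]_{13}u$ for every $u\in\WzC$; on the other hand, the Laplace transform of $t\mapsto\bigl[\sTt\bigr]_{13}u$ also equals $[\RlsA]_{13}u$. By uniqueness of the Laplace transform (see \cite[Prop.~1.7.3]{AB}) one obtains $T^D_{13}(t)u=\bigl[\sTt\bigr]_{13}u$ for all $t\ge0$ and $u\in\WzC$, so $T^D_{13}(t)$ admits a bounded extension. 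By Remark~\ref{rem-admiss-(BC)} this is equivalent to $p$-admissibility of the pair $(B,C)$.

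For the converse, assume that $B$, $C$ and the pair $(B,C)$ are all $p$-admissible. By Remark~\ref{rem-admiss-(BC)} the operator $T^D_{13}(t_0):\WzC\subset E_2^p\to E_1^p$ has a unique bounded extension for some $t_0>0$. Lemma~\ref{tuttiBC}, whose hypotheses require exactly the individual admissibility of $B$ and $C$ (used quantitatively through the constants $M_B$ and $M_C$), upgrades this to a bound $M_{BC}$ that is valid for every finite $t\ge0$. Since $D\in\sL(U,Y)$ is bounded, the $D$-dependent summand $\el\otimes D\delta_0$ contributes an obviously bounded term and does not affect the extension argument. Therefore $\Ft:=\overline{T^D_{13}(t)}\in\sL(E_2^p,E_1^p)$ for every $t\ge0$, with a locally uniform operator-norm bound.

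Finally, strong continuity follows by the standard density argument: Lemma~\ref{lem:t_13-welldef} yields that $t\mapsto T^D_{13}(t)u$ is continuous for each $u$ in the dense subspace $\WzC\subset E_2^p$, and the uniform boundedness of $\Ftt$ on compact intervals obtained in the previous paragraph allows the passage to all $u\in E_2^p$ via \cite[Lem.~I.5.2]{EN}. I do not expect any real obstacle here; the only point requiring attention is that one must genuinely invoke the individual $p$-admissibility of $B$ and $C$, not just that of the pair, in order to apply Lemma~\ref{tuttiBC} and get a bound that is uniform in $t$.
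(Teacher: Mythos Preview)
Your proposal is correct and follows essentially the same route as the paper: Lemma~\ref{laplBC} plus uniqueness of the Laplace transform for the forward direction, and Remark~\ref{rem-admiss-(BC)} together with Lemma~\ref{tuttiBC} and Lemma~\ref{lem:t_13-welldef} for the converse and strong continuity. Two cosmetic slips to fix: the semigroup $\sTt$ is \emph{upper}-triangular, not lower-triangular; and the $D$-dependent part of $T^D_{13}(t)$ is the term $\1_{[-t,0]}(\cd)Du(t+\cd)$, not the resolvent entry $\el\otimes D\delta_0$ (though your point that it is manifestly bounded is of course right).
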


\begin{proof}
If $\A$ is the generator of a $C_0$-semigroup $\sTtt$, then by Lemma~\ref{laplBC} and the uniqueness of the Laplace transform we obtain that $\bigl[\sTt\bigr]_{13}u=T_{13}(t)u$ for all $t\ge0$ and $u\in\WzC$. Since $\bigl[\sTt\bigr]_{13}\in\sL(E_2^p,E_1^p)$,  Remark~\ref{rem-admiss-(BC)} then implies that the pair $(B,C)$ is $p$-admissible.

Conversely,
if $(B,C)$ is $p$-admissible, then Remark~\ref{rem-admiss-(BC)} and Lemma~\ref{tuttiBC} imply that the operator $T^D_{13}(t):\WzC\subset X\to E_1^p$ has a (unique) bounded extension for every $t\ge0$.
We recall that by Lemma~\ref{lem:t_13-welldef} the map $t \mapsto T_{13}^D(t)u$ is continuous for every $u \in \WzC$.
Hence by a density argument, using Lemma~\ref{tuttiBC}, we conclude that $\Ftt$ is strongly continuous.
\end{proof}

\section{Characterization of Admissible Pairs in the Frequency Domain}
\label{sec:FM}

The aim of this section is to characterize admissibility in the frequency domain, i.e., in terms of the entries of the resolvent $\RlsA$ of $\A$. For the admissibility of the observation operator $C$ (related to the boundedness of  the entry $T_{12}(t)$ of the semigroup operators $\sT(t)=(T_{jk}(t))_{3\times 3}$, cf. Subsection~\ref{T12}) and the admissibility of the control operator $B$ (related to the boundedness of  $T_{23}(t)$, cf. Subsection~\ref{T23}) this problem was posed by Weiss in \cite{Wei:91}, \cite{Wei:98} and in the sequel has been studied by various authors. We refer to \cite{JP:04} for a nice survey on this matter.

Here we concentrate on the entry $T_{13}(t)$ related to the admissibility of the pair $(B,C)$.
Our approach is based on the concept of Fourier multipliers, cf.  \cite[Sect.~2.5]{Amann}, \cite[Sect.~5.2]{BP}, \cite[App.~E.1]{Haase}. First we recall the basic definition,  where $\Four$ denotes the Fourier transform.

\begin{definition}
Let $V,W$ be two Banach spaces and $1 \leq p < \infty$.
A function $ m \in\L^{\infty}\bigl(\R,\sL(V,W)\bigr)$ is called (bounded) $\L^p$-\emph{Fourier multiplier} if the map\footnote{By $\Schw(\R,V)$ we denote the set of all Schwartz functions with values in the Banach space $V$.
}
\[
v \mapsto \Four^{-1} \left( m \Four v\right)
\qquad  \text{for } v \in \Schw\left(\R, V\right)
\]
has a continuous extension to a bounded operator from $\L^p(\R,V)$ to $\L^p(\R,W)$.
\end{definition}

Since by Assumption~\ref{assum-reg}.(i) we have $i\R\subset\rho(A)$ we can, using \eqref{eq:Delta-bdd}, define the map
\[
m_{13}:\R\to\sL(U,Y), \quad m_{13}(\gamma):= C_L R(i\gamma,A_{-1})B.
\]
Now the following characterization holds.

\begin{proposition}\label{m13}
Let $B$ and $C$ be $p$-admissible control and observation operators, respectively. Then
the pair $(B,C)$ is $p$-admissible if and only if $m_{13}$ is a bounded Fourier multiplier.
\end{proposition}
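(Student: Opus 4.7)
The plan is to identify the convolution operator $\bFt$ of Remark~\ref{F}, which encodes $p$-admissibility of the pair $(B,C)$, with the Fourier multiplier operator $T_{m_{13}}u:=\Four^{-1}\bigl(m_{13}\,\Four u\bigr)$. The bridge is provided by Lemma~\ref{laplBC}: for $u\in\WzC$ extended by zero to $\R$ one has $\widehat{\bFt u}(\lambda)=C_LR(\lambda,A_{-1})B\,\hat u(\lambda)$ for $\Re\lambda>0$. Thanks to Assumption~\ref{assum-reg}(i) the map $\lambda\mapsto C_LR(\lambda,A_{-1})B$ is holomorphic and bounded (by \eqref{eq:Delta-bdd}) on $\{\Re\lambda>0\}$ and extends continuously to the imaginary axis with boundary values $m_{13}(\gamma)$. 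Passing $\Re\lambda\searrow 0$ (justified by exponential stability and dominated convergence, in the spirit of Lemma~\ref{lem:L-trafo-conv}) the Laplace transform on vertical lines becomes the Fourier transform on $\R$, yielding the key identity
\[
\Four(\bFt u)(\gamma)=m_{13}(\gamma)\,\Four u(\gamma)\quad\text{for all }\gamma\in\R,
\]
so that on such $u$ the operator $\bFt$ agrees with the restriction of $T_{m_{13}}$ to $[0,\infty)$.

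The implication ``$m_{13}$ a bounded $\L^p$-Fourier multiplier $\Rightarrow$ $(B,C)$ is $p$-admissible'' is then immediate. Let $T_{m_{13}}\in\sL\bigl(\L^p(\R,U),\L^p(\R,Y)\bigr)$ denote the extended Fourier multiplier operator. For $u\in\WzC$ extended by zero to $\tilde u\in\L^p(\R,U)$ the key identity gives $\bFt u=(T_{m_{13}}\tilde u)\big|_{[0,\infty)}$, hence
\[
\|\bFt u\|_{\LpRpY}\le\|T_{m_{13}}\tilde u\|_{\L^p(\R,Y)}\le\|T_{m_{13}}\|\cdot\|u\|_{\LpRpU}.
\]
Since $\WzC$ is dense in $\LpRpU$, the operator $\bFt$ has a bounded extension in $\sL(\LpRpU,\LpRpY)$, and Remark~\ref{F} together with Lemma~\ref{tuttiBC} yields $p$-admissibility of $(B,C)$.

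For the converse, assume that $(B,C)$ is $p$-admissible; by Remark~\ref{F} the operator $\bFt$ extends boundedly to $\sL(\LpRpU,\LpRpY)$. Since $\bFt$ is one-sided convolution with the causal kernel $t\mapsto C_LT_{-1}(t)B$ (made rigorous via the integration-by-parts decomposition \eqref{intxp}), it commutes with right-translations on functions supported in $[0,\infty)$. Exploiting this, we extend $\bFt$ to a bounded translation-invariant operator $\mathcal{G}\in\sL\bigl(\L^p(\R,U),\L^p(\R,Y)\bigr)$ as follows: for $u\in\L^p(\R,U)$ compactly supported in $[-N,\infty)$ set $(\mathcal{G}u)(s):=\bigl(\bFt u(\cd-N)\bigr)(s+N)$ for $s\ge-N$ and zero otherwise; independence of the choice of $N$ follows from causality, and the norm bound $\|\mathcal{G}\|\le\|\bFt\|$ is immediate. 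On Schwartz functions supported in $\R_+$ the identity from the first paragraph gives $\Four(\mathcal{G}u)=m_{13}\,\Four u$, and by translation invariance of $\mathcal{G}$ together with the covariance of $\Four$ under translations, this identity propagates to every $u\in\Schw(\R,U)$. Hence $\mathcal{G}=T_{m_{13}}$ extends to a bounded operator on $\L^p(\R,U)$, i.e., $m_{13}$ is a bounded $\L^p$-Fourier multiplier. The main obstacle will be precisely the careful construction of this translation-invariant extension $\mathcal{G}$ and the rigorous passage $\Re\lambda\searrow 0$ from Laplace to Fourier transform in the first paragraph, both of which rely essentially on Assumption~\ref{assum-reg}(i).
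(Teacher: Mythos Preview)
Your overall strategy matches the paper's: establish the key identity $\Four(\bFt u)=m_{13}\,\Four u$ on a dense set, then use Remark~\ref{F} to translate between boundedness of $\bFt$ and the multiplier property of $m_{13}$. Two points of comparison are worth noting.

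First, for the key identity you propose to take $u\in\WzC$ and pass $\Re\lambda\searrow0$ in Lemma~\ref{laplBC} via dominated convergence. This is delicate: for general $u\in\WzC$ neither $u$ nor $\bFt u$ need lie in $\L^1$, so the integrals defining the Laplace transform at $\Re\lambda=0$ are not absolutely convergent and ``dominated convergence'' does not apply directly. The paper sidesteps this by restricting to the smaller (still dense) class
\[
W^{2,p}_{0,c}\bigl([0,\infty),U\bigr):=\bigl\{u\in\WzC:\text{$u$ has compact support}\bigr\},
\]
for which $u,u',u''\in\L^1$; then the $\L^1$-case of Lemma~\ref{lem:L-trafo-conv} applies \emph{directly} on the imaginary axis (no limit needed), and the computation of Lemma~\ref{laplBC} goes through verbatim at $\lambda=i\gamma$. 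This is both shorter and cleaner than your limit argument, and I recommend you adopt it.

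Second, for the converse (boundedness of $\bFt$ $\Rightarrow$ $m_{13}$ is a multiplier) the paper simply asserts the equivalence in one line, implicitly identifying the half-line convolution operator with the Fourier multiplier. Your explicit construction of the translation-invariant extension $\mathcal{G}$ via causality is a legitimate and more careful way to fill this in; it is not wrong, just more detailed than what the paper deems necessary. Once the identity $\Four(\bFt u)=m_{13}\,\Four u$ holds on a dense translation-generating class, the extension is routine, and the paper treats it as such.
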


\begin{proof}
As we have seen in Remark~\ref{F},
the pair $(B,C)$ is $p$-admissible if and only if the operator $\bFt$ has a bounded extension to $\LpRpU$.
Let $\gamma\in\R$ and
\[u \in W^{2,p}_{0,c}([0,\infty),U) := \Bigl\{u \in \WpU: u(0)=u'(0)=0\text{ and $u$ has compact support}\Bigr\}\]
For such $u$ we have $u$, $u'$, $u''\in\LeRpU$. Hence even though $\Re(i\gamma)=0$, we can argue as in the proof of Lemma~\ref{laplBC}, using the second part of Lemma~\ref{lem:L-trafo-conv}, to obtain
\[
\sL(\bFt u)(i \gamma) =C_L R(i\gamma, A_{-1}) B \sL(u)(i \gamma)\qquad\text{for all }\gamma\in\R.
\]
It thus follows
\begin{equation*}
\Four (\bFt u) = m_{13} \Four u.
\end{equation*}
Using this we conclude that
$m_{13}$ is a bounded Fourier-multiplier if and only if $\bFt$ has a bounded extension to $\LpRpU$ if and only if the pair $(B,C)$ is $p$-admissible.
\end{proof}

%%%%%%%%%%%%%%%%%%%%%%%%%%%%%%%%%%%%%%%%%%%%%%%%%%%%%%%%%%%%%%%%%%%%%%%%%%%

\section{Well-Posed Linear Control Systems and the Lax-Phillips Semigroup}
\label{A-csu}

We now sum up the findings of Subsections~\ref{T12}--\ref{T13} to obtain our main result.
For a linear control system \ref{csu} verifying Assumption~\ref{assum-reg} the following holds.

\begin{theorem}\label{char-gen-sA-admiss}
The system \ref{csu} is $p$-well-posed on $\Xp$, i.e., $\sA$ is the generator of a $C_0$-semigroup $\sT=\sTtt$ on $\Xp$, if and only if $B$ is a $p$-admissible control operator, $C$ is a $p$-admissible observation operator and also the pair $(B,C)$ is $p$-admissible. In this case the semigroup $\sT$ is given by
\begin{equation}\label{laxph}
\sTt =
\begin{pmatrix}
S_1(t) & \Ct &  \Ft \\
0 & T(t) & \Bt\\
0 & 0 & S_2(t)
\end{pmatrix}\qquad\text{for all } t \ge 0.
\end{equation}
\end{theorem}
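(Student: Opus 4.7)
The theorem packages the three subsection-results (Corollaries~\ref{admC}, \ref{cor-addm-B}, \ref{cor:BC-admiss}) into a single statement and adds the explicit matrix form \eqref{laxph}. The strategy is to use Lemma~\ref{lem-char-ABHN} as the bridge: if we can produce a strongly continuous, exponentially bounded family on $\Xp$ whose Laplace transform matches the resolvent \eqref{resolv}, then that family is automatically a $C_0$-semigroup and its generator is $\sA$. Conversely, if $\sA$ generates a semigroup, uniqueness of the Laplace transform forces its entries to coincide with the operator families $T_{jk}(\cdot)$, which immediately yields the three admissibility conditions.

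\textbf{The ``only if'' direction} is essentially already done. If $\sA$ generates a $C_0$-semigroup $\sT$, then by Lemma~\ref{lem-char-ABHN} each $\x\mapsto R(\lambda,\sA)\x$ equals the Laplace transform of $\sT(\cd)\x$; combining this with Lemmas~\ref{laplC}, \ref{laplB}, \ref{laplBC} and the uniqueness of the Laplace transform shows that $\bigl[\sTt\bigr]_{12}$, $\bigl[\sTt\bigr]_{23}$, $\bigl[\sTt\bigr]_{13}$ extend $T_{12}(t)$, $T_{23}(t)$, $T^D_{13}(t)$, respectively. Remarks~\ref{rem:admiss-T_12}, \ref{rem-admiss-B}, \ref{rem-admiss-(BC)} then deliver $p$-admissibility of $C$, $B$ and $(B,C)$, and identify the diagonal entries as $S_1(t)$, $T(t)$, $S_2(t)$ via Laplace uniqueness applied on the respective coordinate subspaces. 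This is exactly what the three corollaries assert, so I would simply cite them.

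\textbf{The ``if'' direction} is where the work lies. Assuming all three admissibility conditions, I define $\sTt$ by the right-hand side of \eqref{laxph}, taking $\Ct$, $\Bt$, $\Ft$ to be the bounded extensions provided by Corollaries~\ref{admC}, \ref{cor-addm-B}, \ref{cor:BC-admiss}. The diagonal entries are strongly continuous by the semigroup property, and the off-diagonal ones are strongly continuous by the same three corollaries; an arbitrary product norm on $\Xp$ then gives strong continuity of $\sTtt\subset\sL(\Xp)$. Uniform boundedness in $t$ follows from Assumption~\ref{assum-reg}.(i) for the diagonal $T(t)$, from contractivity of the shifts $S_1(t)$, $S_2(t)$, and from the uniform bounds on $\Ctt$, $\Btt$, $\Ftt$ supplied by Lemmas~\ref{Ctuttit}, \ref{glmadmB}, \ref{tuttiBC}, so $\sTtt$ is in particular exponentially bounded.

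To apply Lemma~\ref{lem-char-ABHN} it remains to check, for $\Re\lambda>0$, the identity $\LT\bigl(\sT(\cd)\x\bigr)(\lambda)=R(\lambda,\sA)\x$ for every $\x=(y,x,u)^{\top}\in\Xp$, with $R(\lambda,\sA)$ as in \eqref{resolv}. The diagonal entries are standard and the off-diagonal ones are handled by Lemmas~\ref{laplC}, \ref{laplB}, \ref{laplBC} on the dense subspaces $D(A)\subset X$, $E_2^p$, $\WzC\subset E_2^p$, respectively; since both sides are bounded linear operators in $\x$ (the left side by exponential boundedness of $\sTtt$, the right side by Corollary~\ref{res-sA}), density extends the identity to all of $\Xp$. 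Lemma~\ref{lem-char-ABHN} then concludes that $\sTtt$ is a $C_0$-semigroup whose generator is $\sA$. \textbf{The main obstacle} I anticipate is the bookkeeping in this last density-and-continuity step for the $[1,3]$-entry, where the admissibility of $(B,C)$, the uniform bound from Lemma~\ref{tuttiBC}, and the compatibility condition \eqref{eq:Delta-bdd} must all be brought together so that the limit identity genuinely holds for arbitrary $u\in E_2^p$ (not only for the smooth compactly-supported $u$ used in Lemma~\ref{laplBC}).
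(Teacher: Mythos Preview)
Your proposal is correct and follows essentially the same route as the paper: invoke Corollaries~\ref{admC}, \ref{cor-addm-B}, \ref{cor:BC-admiss} for the forward direction, and for the converse define $\sTt$ by \eqref{laxph}, check strong continuity and exponential boundedness, match the Laplace transform entrywise via Lemmas~\ref{laplC}, \ref{laplB}, \ref{laplBC}, then conclude with Lemma~\ref{lem-char-ABHN}. Your anticipated ``main obstacle'' (the density extension for the $[1,3]$-entry) is handled exactly by the uniform bound of Lemma~\ref{tuttiBC} plus continuity in $u$ of both sides, so it is routine bookkeeping rather than a gap.
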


\begin{proof}
If $\sA$ is a generator on $\Xp$ then $C$, $B$ and $(B,C)$ are $p$-admissible by Corollaries~\ref{admC}, \ref{cor-addm-B} and \ref{cor:BC-admiss}, respectively. Conversely, if $C$, $B$ and $(B,C)$ are $p$-admissible, again by Corollaries~\ref{admC}, \ref{cor-addm-B} and \ref{cor:BC-admiss} we can define a strongly continuous operator family $\sTtt$ by \eqref{laxph}. Using Lemmas~\ref{laplC}, \ref{laplB} and \ref{laplBC} it follows that
\[
\sL\bigl(\sT(\cd)\bigr)(\lambda)=\RlsA
\]
hence by Lemma~\ref{lem-char-ABHN}, $\sT$ is a $C_0$-semigroup with generator $\sA$.
\end{proof}

Combining Proposition~\ref{m13} with Theorem~\ref{char-gen-sA-admiss} we obtain a second characterization.

\begin{corollary}\label{admBC}
The system \ref{csu} is $p$-well-posed on $\Xp$ if and only if $B$, $C$ are $p$-admissible control and observation operators, respectively, and $m_{13}$ is a bounded Fourier multiplier.
\end{corollary}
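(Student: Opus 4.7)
The plan is straightforward: this corollary is a direct synthesis of Theorem~\ref{char-gen-sA-admiss} and Proposition~\ref{m13}, so the proof amounts to carefully chaining the two equivalences while making sure the hypotheses of Proposition~\ref{m13} are in place at each step.

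For the forward implication, I would start from the assumption that $\Sigma(A,B,C,D)$ is $p$-well-posed. By Theorem~\ref{char-gen-sA-admiss} this immediately gives that $B$ is a $p$-admissible control operator, $C$ is a $p$-admissible observation operator, and the pair $(B,C)$ is $p$-admissible. In particular, the standing hypothesis of Proposition~\ref{m13} (namely, that $B$ and $C$ are individually $p$-admissible) is satisfied, so we may apply that proposition and conclude that $m_{13}$ is a bounded $\L^p$-Fourier multiplier. This yields the three desired conditions.

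For the converse, I would assume that $B$ and $C$ are $p$-admissible and that $m_{13}$ is a bounded Fourier multiplier. Since $B$ and $C$ are $p$-admissible, Proposition~\ref{m13} is applicable, and the Fourier multiplier property of $m_{13}$ implies that the pair $(B,C)$ is $p$-admissible. Having all three admissibility conditions, a second application of Theorem~\ref{char-gen-sA-admiss} yields that $\sA$ generates a $C_0$-semigroup on $\Xp$, i.e., that $\Sigma(A,B,C,D)$ is $p$-well-posed.

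There is essentially no obstacle here — the only thing to be careful about is the logical order: Proposition~\ref{m13} only characterizes admissibility of the pair under the standing assumption that $B$ and $C$ are separately admissible, so we cannot short-circuit the argument by trying to replace all three admissibility conditions of Theorem~\ref{char-gen-sA-admiss} simultaneously by the Fourier multiplier condition on $m_{13}$. Keeping $B$ and $C$ admissible as hypotheses in the statement of the corollary makes the combination immediate.
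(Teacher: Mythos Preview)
Your proposal is correct and matches the paper's approach exactly: the corollary is stated in the paper without a separate proof, since it follows immediately by combining Theorem~\ref{char-gen-sA-admiss} with Proposition~\ref{m13}. Your care about the logical order (that Proposition~\ref{m13} requires $B$ and $C$ to be individually $p$-admissible before it can be invoked) is precisely the only point worth noting.
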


As a corollary we characterize the $2$-well-posedness of the system \ref{csu} in case all the spaces $X$, $Y$ and $U$ are Hilbert spaces.
Using the Plancharel Theorem (see \cite[Thm.1.8.2]{AB}) one can first prove the following.

\begin{lemma}\label{multhilbert}
Let $V, W$ be two Hilbert spaces, then every $m \in\L^{\infty}\bigl(\R, \sL(V,W)\bigr)$ is a (bounded) $\L^2$-Fourier multiplier.
\end{lemma}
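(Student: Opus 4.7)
The plan is to reduce the statement to a direct application of the vector-valued Plancherel theorem, which is available precisely because both $V$ and $W$ are Hilbert spaces. Recall that Plancherel asserts that the Fourier transform $\Four$ extends to an isometric isomorphism $\L^2(\R,H)\to\L^2(\R,H)$ (up to a normalization constant that is irrelevant here) for any Hilbert space $H$, and the same holds for $\Four^{-1}$.

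First I would fix $v\in\Schw(\R,V)$. Then $\Four v\in\Schw(\R,V)\subset\L^2(\R,V)$, and pointwise multiplication by $m$ yields a strongly measurable function $m\Four v:\R\to W$. Since $m$ is essentially bounded, the estimate
\[
\|m\Four v\|_{\L^2(\R,W)}^2=\int_\R\bigl\|m(\gamma)(\Four v)(\gamma)\bigr\|_W^2\ud\gamma\le\|m\|_\infty^2\int_\R\bigl\|(\Four v)(\gamma)\bigr\|_V^2\ud\gamma=\|m\|_\infty^2\,\|\Four v\|_{\L^2(\R,V)}^2
\]
holds. By Plancherel applied on the Hilbert space $V$ we have $\|\Four v\|_{\L^2(\R,V)}=\|v\|_{\L^2(\R,V)}$, hence $m\Four v\in\L^2(\R,W)$ and
\[
\|m\Four v\|_{\L^2(\R,W)}\le\|m\|_\infty\,\|v\|_{\L^2(\R,V)}.
\]

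Next I would apply $\Four^{-1}$ and invoke Plancherel a second time, this time on the Hilbert space $W$, to conclude that
\[
\bigl\|\Four^{-1}(m\Four v)\bigr\|_{\L^2(\R,W)}=\|m\Four v\|_{\L^2(\R,W)}\le\|m\|_\infty\,\|v\|_{\L^2(\R,V)}.
\]
Since $\Schw(\R,V)$ is dense in $\L^2(\R,V)$, the linear map $v\mapsto\Four^{-1}(m\Four v)$ has a unique bounded extension to an operator in $\sL\bigl(\L^2(\R,V),\L^2(\R,W)\bigr)$ with operator norm bounded by $\|m\|_\infty$. By definition, this means $m$ is a bounded $\L^2$-Fourier multiplier.

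The only nontrivial point is the validity of Plancherel's theorem for functions taking values in an infinite-dimensional Hilbert space; this is exactly the content of the reference \cite[Thm.~1.8.2]{AB} cited just before the lemma, so the proof is essentially a one-line consequence. No Hilbert space structure is required on $m$ itself and no regularity beyond $\L^\infty$; this striking feature is the reason why the $p=2$ case is substantially simpler than the general $p$ setting considered in Proposition~\ref{m13}.
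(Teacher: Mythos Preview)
Your proof is correct and follows precisely the approach the paper indicates: it applies the vector-valued Plancherel theorem \cite[Thm.~1.8.2]{AB} on $V$ and on $W$ to bound $\Four^{-1}(m\Four v)$ by $\|m\|_\infty\|v\|_{\L^2(\R,V)}$. The paper itself does not spell out the argument beyond citing Plancherel, so your write-up is in fact more detailed than, but entirely in line with, the original.
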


Combining Corollary~\ref{admBC} and Lemma~\ref{multhilbert} we immediately obtain our next result.

\begin{corollary}\label{hilbert}
Let $X$, $Y$ and $U$ be Hilbert spaces. Then the system \ref{csu} is $2$-well-posed if and only if $B$, $C$ are $2$-admissible and $m_{13} = C_L R(i\,\cd, A_{-1}) B \in \L^{\infty}\bigl(\R, \sL(U,Y)\bigr)$.
\end{corollary}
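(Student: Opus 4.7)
The proof will be a short, direct combination of the two results just cited, so the plan is essentially bookkeeping rather than new estimates.

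For the forward implication, I would assume that $\Sigma(A,B,C,D)$ is $2$-well-posed and apply Corollary~\ref{admBC} with $p=2$. This immediately yields that $B$ and $C$ are $2$-admissible and that $m_{13}$ is a bounded $\L^2$-Fourier multiplier from $U$ to $Y$. Since by definition a (bounded) Fourier multiplier is a function in $\L^{\infty}\bigl(\R,\sL(U,Y)\bigr)$, we obtain $m_{13} \in \L^{\infty}\bigl(\R,\sL(U,Y)\bigr)$ as required. Note that $m_{13}$ is well-defined on all of $\R$ thanks to Assumption~\ref{assum-reg}.(i), which guarantees $i\R\subset\rho(A)$, combined with \eqref{eq:Delta-bdd}.

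For the reverse implication, I would assume that $B$ and $C$ are $2$-admissible and that $m_{13}\in\L^{\infty}\bigl(\R,\sL(U,Y)\bigr)$. Here the Hilbert space structure of $U$ and $Y$ enters crucially: by Lemma~\ref{multhilbert} every element of $\L^{\infty}\bigl(\R,\sL(U,Y)\bigr)$ is automatically an $\L^2$-Fourier multiplier. Applying this to $m_{13}$ and then invoking Corollary~\ref{admBC} in the direction ``$B$, $C$ and $m_{13}$ good $\Rightarrow$ $\Sigma$ $2$-well-posed'' concludes the proof.

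There is no real obstacle here; the work has been done in Corollary~\ref{admBC} (which reduces $p$-well-posedness to admissibility of $B$, $C$ plus the Fourier-multiplier condition on $m_{13}$) and in Lemma~\ref{multhilbert} (which, via the Plancherel theorem, trivializes the multiplier condition in the Hilbert--Hilbert, $p=2$ setting). The only thing to be a little careful about is to state explicitly that $m_{13}$ is well-defined as a map $\R\to\sL(U,Y)$, which follows from $i\R\subset\rho(A)$ together with \eqref{eq:Delta-bdd}.
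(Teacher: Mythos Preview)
Your proposal is correct and follows exactly the same approach as the paper, which simply states that the result is immediate from combining Corollary~\ref{admBC} with Lemma~\ref{multhilbert}. Your write-up merely makes explicit the two directions and the role of the Hilbert space assumption, which is entirely appropriate.
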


\begin{remark} The semigroup $\sTtt$ in \eqref{laxph} already appears in Staffans and Weiss \cite[Prop.~6.2]{tesi2} and there is called the \emph{Lax-Phillips semigroup} (of index 0) referring to the paper \cite{LP} by Lax and Phillips.

This semigroup describes the solutions of the well-posed system \ref{csu} as follows. For $\x=\bigl(y(\cd),x,u(\cd)\bigr)^t\in \Xp$
\begin{itemize}
	\item the first component of $\sT(\cd)\x$ gives the \emph{past output},
	\item the second component of $\sT(\cd)\x$ represents the \emph{present state},
	\item the third component of $\sT(\cd)\x$ can be interpreted as the \emph{future input}
\end{itemize}
of the system.
\end{remark}

\begin{remark}\label{rem:not-stable}
If the semigroup $\Tt$ generated by the state operator $A$ is not exponentially stable as supposed in Assumption~\ref{assum-reg}.(i)  (i.e., if the growth bound $\omega_0(A)\ge0$, cf.  \cite[Def.~I.5.6]{EN}) we choose $\lambda_0 > \omega_0(A)$. Then for the rescaled generator $A - \lambda_0$ we obtain $\omega_0 (\Al) < 0$.
Moreover, on the product space $\Xp$ we introduce the operator matrix $\sAl$ associated to the control problem $\Sigma(\Al,B,C,D)$. This operator can be written as
\[
\sAl = \A - \lambda_0\sP_2\qquad\text{for}\qquad
\sP_2:=
\left(\begin{smallmatrix}
0 & 0 & 0\\
0 & 1 & 0\\
0 & 0 & 0
\end{smallmatrix}\right)\in\sL(\Xp).
\]
If there exists  $\lambda \in \rho(A)$ such that $\rg\bigl(R(\lambda, A_{-1})B\bigr) \subset D(C_L)$ then this holds for every $\lambda\in\rho(A)$.  Hence $\rg\bigl(R(\mu, A_{-1}-\lambda_0)B\bigr)=\rg\bigl(R(\mu+\lambda_0, A_{-1})B\bigr) \subset D(C_L)$ for every $\mu \in \rho(\Al)$. This shows that $A$ satisfies the compatibility assumption \eqref{bild} if and only if $A-\lambda_0$ does.
\end{remark}

Moreover we have the following result.

\begin{theorem}\label{pert} Let $\lambda_0\in\rho(A)$. Then the following are equivalent.
\begin{enumerate}
	\item[(a)] $\A$ is the generator of a  $C_0$-semigroup on $\Xp$,
	\item[(b)] $\sAl$ is the generator of a  $C_0$-semigroup on $\Xp$,
	\item[(c)] $B,\,C$ and the pair $(B,C)$ are $p$-admissible with respect to $\Al$,
	\item[(d)] $B$ and $C$ are $p$-admissible with respect to $\Al$ (or $A$) and $m_{13}^{\lambda_0}:= C_L R(\lambda_0 +i\,\cd, A_{-1})B $ is a bounded Fourier-multiplier.
\end{enumerate}
\end{theorem}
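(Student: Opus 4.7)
The plan is to reduce everything to Theorem~\ref{char-gen-sA-admiss} and Corollary~\ref{admBC} applied to a suitably rescaled system. Before tackling any of the four equivalences I would record two preparatory facts. First, by the argument already given in Remark~\ref{rem:not-stable}, the compatibility condition \eqref{bild} is preserved under rescaling, so Assumption~\ref{assum-reg}.(ii) is inherited by $\Al$ from $A$ for every $\lambda_0\in\rho(A)$. Second, $p$-admissibility of $B$, of $C$ and of the pair $(B,C)$ is unchanged when $A$ is replaced by $\Al$: the semigroups generated by $A$ and $\Al$ differ by the scalar factor $e^{-\lambda_0 t}$, which is bounded above and below by positive constants on every compact interval $[0,t_0]$, so the defining $L^p$-estimates from Subsections~\ref{T12}--\ref{T13} carry over with only the constants changing. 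Thus condition~(c), and the admissibility part of~(d), are independent of the particular $\lambda_0\in\rho(A)$ chosen.

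For (a) $\Leftrightarrow$ (b) I would simply write
\[
\sAl \;=\; \A \;-\; \lambda_0\,\sP_2\qquad\text{with}\qquad \sP_2\in\sL(\Xp),
\]
and invoke the bounded perturbation theorem \cite[Thm.~III.1.3]{EN}: adding $\pm\lambda_0\sP_2$ preserves the generator property in both directions.

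For the remaining equivalences I would pick, as in Remark~\ref{rem:not-stable}, a real $\mu_0>\omega_0(A)$ so that $A-\mu_0$ is exponentially stable; then $\Sigma(A-\mu_0,B,C,D)$ satisfies Assumption~\ref{assum-reg} in full, and Theorem~\ref{char-gen-sA-admiss} together with Corollary~\ref{admBC} applies. These yield the chain: $\A^{\mu_0}$ is a generator $\Leftrightarrow$ $B$, $C$ and $(B,C)$ are $p$-admissible with respect to $A-\mu_0$ $\Leftrightarrow$ $B$, $C$ are $p$-admissible and $m_{13}^{\mu_0}$ is a bounded Fourier multiplier. Combining the first link with (a) $\Leftrightarrow$ (b) applied at $\mu_0$ and with the shift-invariance of admissibility gives (b) $\Leftrightarrow$ (c). For the passage to $m_{13}^{\lambda_0}$ in (d), I would use the identity $m_{13}^{\lambda_0}(\gamma)=C_L R\bigl(i\gamma,(A-\lambda_0)_{-1}\bigr)B$, so that Proposition~\ref{m13} applied directly to $\Sigma(\Al,B,C,D)$ closes the loop whenever $\Re\lambda_0>\omega_0(A)$.

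The step I expect to be the most delicate is comparing the Fourier-multiplier property of $m_{13}^{\lambda_0}$ with that of $m_{13}^{\mu_0}$ when $\lambda_0\in\rho(A)$ is arbitrary and does not itself render $\Al$ exponentially stable. The natural tool is the resolvent identity, which gives
\[
m_{13}^{\lambda_0}(\gamma) - m_{13}^{\mu_0}(\gamma) \;=\; (\mu_0-\lambda_0)\,C\,R(\lambda_0+i\gamma,A)\cdot R(\mu_0+i\gamma,A_{-1})\,B,
\]
where the second factor lies in $D(C_L)\subset X$ by compatibility and the first is a bounded operator-valued function on $i\R$. Showing that this difference is always a bounded Fourier multiplier -- so that the FM-property of $m_{13}^{\lambda_0}$ and of $m_{13}^{\mu_0}$ coincide -- together with the shift-invariance of $(B,C)$-admissibility, would then complete (c) $\Leftrightarrow$ (d) and hence close the chain (a) $\Leftrightarrow$ (b) $\Leftrightarrow$ (c) $\Leftrightarrow$ (d).
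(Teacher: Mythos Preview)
Your argument for (a) $\Leftrightarrow$ (b) via bounded perturbation is exactly the paper's. For (b) $\Leftrightarrow$ (c) and (c) $\Leftrightarrow$ (d) the paper proceeds more directly: it applies Theorem~\ref{char-gen-sA-admiss} and Proposition~\ref{m13} \emph{straight} to the rescaled system $\Sigma(\Al,B,C,D)$, noting only that admissibility of $B$ and $C$ is the same with respect to $A$ and $\Al$, and that $C_L R(i\,\cd,(A-\lambda_0)_{-1})B=C_L R(\lambda_0+i\,\cd,A_{-1})B=m_{13}^{\lambda_0}$. No auxiliary $\mu_0$ appears.

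This direct route is legitimate because, in the context set up by Remark~\ref{rem:not-stable}, $\lambda_0$ is implicitly taken with $\Re\lambda_0>\omega_0(A)$, so $\Al$ is exponentially stable and Assumption~\ref{assum-reg}.(i) holds for it. Under that reading your detour through a separate $\mu_0$ and the resolvent-identity comparison of $m_{13}^{\lambda_0}$ with $m_{13}^{\mu_0}$ are unnecessary; you can simply set $\mu_0=\lambda_0$.

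You are right, though, that the hypothesis as literally stated --- merely $\lambda_0\in\rho(A)$ --- does not ensure $\Al$ is exponentially stable, nor even that $\lambda_0+i\R\subset\rho(A)$, so $m_{13}^{\lambda_0}$ need not be globally defined. Your proposed comparison step would have to confront this, and the part you flag as ``delicate'' is genuinely nontrivial (and plausibly false without extra hypotheses on $\lambda_0$). The paper's proof does not address this generality; it is tacitly written for the case $\Re\lambda_0>\omega_0(A)$, and your caution is a fair reading of a gap between the theorem's stated hypothesis and what the proof actually covers.
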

\begin{proof}
(a)$\iff$(b). Since $\sA$ and $\sAl$ differ only by a bounded operator this equivalence holds by the bounded perturbation theorem, cf. \cite[Thm.III.1.3]{EN}.

(b)$\iff$(c).  This equivalence holds by Theorem \ref{char-gen-sA-admiss}.

(c)$\iff$(d). It is clear that $B$ and  $C$ are $p$-admissible with respect to $\Al$ if and only if they are $p$-admissible with respect to $A$. By Theorem \ref{m13} the pair $(B,C)$ is $p$-admissible with respect to $\Al$ if and only if $m_{13}^{\lambda_0} = C_L R(i\,\cd, A_{-1}-\lambda_0)B=C_L R(\lambda_0 +i\,\cd, A_{-1})B $ is a bounded Fourier-multiplier.
\end{proof}

\section{Example: A Heat Equation with Boundary Control and Point Observation}
\label{example}

To illustrate our results we consider a metal bar of length $\pi$ modeled as a segment $[0,\pi]$.
Our aim is to control its temperature by putting controls $u_0(t)$ and $u_1(t)$ at the edges $0$ and $\pi$.
Moreover, we observe the system by measuring its temperature at the center $\frac{\pi}{2} \in [0,\pi]$.

As state space we choose  the Hilbert space $X = \L^2 [0,\pi]$ and consider the \emph{state function} $x(s,t)$ representing the temperature in the point $s\in [0,\pi]$ at time $t\ge0$.

If we start from the temperature profile $x_0\in X$, the time evolution of our system can be described by a heat equation with boundary control and a point observation, more precisely by
\begin{equation}\label{eq:1}
\begin{cases}
\frac{\partial x(s,t)}{\partial t} = \frac{{\partial}^2 x(s,t)}{\partial s^2},&t \ge 0,\ s\in[0,\pi], \\
x(s,0) = x_0(s),&s \in [0,\pi],\\
\frac{\partial x}{\partial s}(0,t) = u_0 (t),&t \ge 0,\\
\frac{\partial x}{\partial s}(\pi,t) = u_1 (t),&t \ge 0,\\
y(t) = x\bigl(\tfrac{\pi}{2},t\bigr),&t \ge 0.
\end{cases}
\end{equation}
Here the boundary conditions in $s=0$ and $s=\pi$ involving $u_0(\cd)$ and $u_1(\cd)$ describe the heat exchange between the ends of the bar and the environment.

In order to write \eqref{eq:1} as a linear control system of the form \ref{csu} we use the approach for boundary control problems developed in \cite[Sect.~2]{maximal}. To this end we define the following operators and spaces.
\begin{itemize}
\item The \emph{maximal system operator}
\[
A_m := \frac{\ud^2}{\ud s^2}
\qquad\text{with domain}\qquad
D(A_m) := W^{2,2}[0,\pi]\subset X=\L^2 [0,\pi];
\]
\item the \emph{boundary space} $\partial X:=\C^2$ and the \emph{boundary operator}\footnote{Here $\bigl[D(A_m)\bigr]$ indicates the space $D(A_m)$ endowed with the graph norm $\|\cd\|_{A_m}$.}
\[
Q:\bigl[D(A_m)\bigr]\to\partial X,\quad Qf:={\bigl( f'(0), \ f'(\pi)  \bigr)}^t;
\]
\item the \emph{control space} $U:=\C^2$ and the \emph{control operator} $\tilde{B}:= Id\in\sL(U,\partial X)$;
\item the \emph{observation space} $Y:= \C$ and the \emph{observation operator}\footnote{By $\delta_{\frac{\pi}{2}}$ we indicate the point evaluation in $\frac{\pi}{2}$.} $C := \delta_{\frac{\pi}{2}}$.
\end{itemize}

With this notation  \eqref{eq:1} can be rewritten as an abstract Boundary Control System
\begin{equation*}
\tag*{(aBCS)}
\label{eq:bc}
\begin{cases}
\dot{x}(t)=A_m x(t),& t\ge0, \\
Qx(t)=\tilde{B}u(t),& t\ge0,\\
y(t) = C x(t),& t \ge 0,\\
x(0)=x_0.
\end{cases}
\end{equation*}
\goodbreak
We note that
\begin{itemize}
\item the operator $A\subset A_m$ with domain
\begin{equation*}
D(A) := \ker(Q)=\bigl\{h\in\W^{2,2}[0,\pi]:f'(0) = f'(\pi) = 0\bigr\}
\end{equation*}
is the generator of a $C_0$-semigroup $\Tt$ on $X$, and its spectrum is given by $\sigma(A) = \bigl\{-n^2 : n \in \N\bigr\}$ (see \cite[Sect.~II.3.30]{EN});
\item the boundary operator $Q$ is surjective,
\end{itemize}
i.e., the Main Assumptions~2.3 in \cite{maximal} are satisfied. In order to use the abstract theory for boundary control systems developed in \cite[Sect.~2]{maximal} we need the \emph{Dirichlet operator}
\[
Q_{\lambda} = {\bigl(Q_{|\ker(\lambda-A_m)}\bigr)}^{-1}:\partial X\to \ker(\lambda-A_m)
\]
which by \cite[Lem.~2.4.(ii)]{maximal} exists for every $\lambda \in \rho(A)$.

Since\footnote{With $\spn\{f, g\}$ we denote the linear vector space generated by $f$ and $g$.} $\ker(\lambda-A_m) = \spn\bigl\{\cosh\bigl(\sqrt{\lambda}\,\cd\bigr), \cosh\bigl(\sqrt{\lambda}(\pi-\cd)\bigr)\bigr\}$,
a simple computation  shows that
\begin{equation*}
Q_{\lambda} = \bigl(q_0(\cd), q_1(\cd)\bigr),
\end{equation*}
where for $s \in [0,\pi]$
\begin{equation*}
	q_0(s):= -\frac{\cosh\bigl(\sqrt{\lambda}(\pi-s)\bigr)}{\sqrt{\lambda}\sinh\bigl(\sqrt{\lambda}\pi\bigr)},\qquad\quad
	q_1(s):=
\frac{\cosh\bigl( \sqrt{\lambda}s\bigr)}{\sqrt{\lambda}\sinh\bigl( \sqrt{\lambda}\pi\bigr)}.
\end{equation*}
Let $B_{\lambda} := Q_{\lambda}\tilde{B}=Q_{\lambda}$. Then by \cite[Sect.~2]{maximal} the system \ref{eq:bc} is equivalent to \ref{csu} for the operators
\begin{align*}
B & := (\lambda-A_{-1})Q_{\lambda} \in \sL (U, X_{-1}), \\
C & := \delta_{\frac{\pi}{2}} \in \sL(X_1, Y), \\
D & := 0 \in \sL(U,Y).
\end{align*}

In order to prove $2$-well-posedness of the system $\Sigma(A,B,C,0)$ we transform it into an isomorphic problem on $\ell^2$.

To this end we first note that $A$ is self-adjoint and has compact resolvent. Hence its normalized eigenvectors given by
\[
e_n(s) = \sqrt{\tfrac{\den}{\pi}}\, \cos(ns) \quad
\qquad\text{where}\qquad
\den=
\begin{cases}
1&\text{if }n=0,\\
2&\text{if }n\ge1
\end{cases}
\]
form an orthonormal basis of $X$. Using this basis we define the surjective isometry
\[
J:X\to\ell^2,\quad Jf:=\bigl(\left\langle f, e_n\right\rangle\bigr)_{n\in\N},
\]
which associates to a function $f\in X$ the sequence of its Fourier coefficients relatively to $(e_n)_{n\in\N}$.

Next we put $z(t):=Jx(t)$. Then the system $\Sigma(A,B,C,0)$ transforms to
\[
\Sigma\bigl(JAJ^{-1},JB,CJ^{-1},0\bigr)=\Sigma\bigl(JAJ^{-1},J(\lambda-A_{-1})Q_{\lambda},\delta_{\frac\pi2}J^{-1},0\bigr).
\]
In particular, the differential operator $A$ transforms into the multiplication operator
\[
JAJ^{-1}=: M_{\alpha} =: M  : D(M)\subset \ell^2 \to \ell^2
\]
where $\alpha = (-n^2)_{n\in\N}$ and
\[
D(M) = \Bigl\{(a_n)_{n\in\N} \in \ell^2: (-n^2a_n)_{n\in\N} \in \ell^2 \Bigr\}.
\]
This gives for $\lambda>0$ the extrapolation space
\[
X_{-1}^M = \Bigl\{(a_n)_{n\in\N} \in {\C}^{\N}: \Bigl(\frac{a_n}{\lambda+n^2}\Bigr)_{n\in\N} \in \ell^2 \Bigr\}.
\]
Moreover, the Dirichlet operator $Q_{\lambda}$ transforms into the operator
\[
JQ_{\lambda} = \biggl(\Bigl(-\frac{\sqrt{\den/\pi}}{\lambda+n^2}\Bigr)_{n\in\N}, \Bigl(\frac{(-1)^n\sqrt{\den/\pi}}{\lambda+n^2}\Bigr)_{n\in\N}\biggr).
\]
Thus the control operator $B$ transforms into
\begin{equation}\label{eq:def-b}
b := J(\lambda-A_{-1})Q_{\lambda}=(\lambda-M)JQ_{\lambda} =  \biggl(\Bigl(-\sqrt{{\tfrac{\den}{\pi}}}\Bigr)_{n\in\N}, \Bigl((-1)^n\sqrt{{\tfrac{\den}{\pi}}}\Bigr)_{n\in\N}\biggr),
\end{equation}
while the observation operator $C$ transforms into the operator
\begin{equation}\label{eq:def-c}
c  := C J^{-1} = \Bigl(e_n\bigl(\tfrac{\pi}{2}\bigr)\Bigr)_{n \in \N}
\end{equation}
where
\[ e_n\bigl(\tfrac{\pi}{2}\bigr)
=\begin{cases}
\ 0&\text{if $n$ is odd},\\
(-1)^{\frac n2}\sqrt{\tfrac{\den}\pi}&\text{if $n$ is even.}
\end{cases}
\]
Summing up, the Control System \eqref{eq:1} is isometrically isomorphic to
\begin{equation}
\label{eq:extr-iso}
\begin{cases}
\dot{z}(t) = M z(t)+bu(t),& t\ge0, \\
y(t) = c z(t), & t\ge 0,\\
z(0)=z_0,&
\end{cases}
\end{equation}
where $z(t):=Jx(t)\in\ell^2$ and $z_0:=Jx_0$.

Our aim is now to prove the $2$-well-posedness of the system $\Sigma(M,b,c,0)$ in \eqref{eq:extr-iso}.  Since $\omega_0(A)=\omega_0(M)=0$ we consider in the sequel   $M-1$ instead of $M$, cf. Remark~\ref{rem:not-stable} and Theorem~\ref{pert}.

First we verify the compatibility condition \eqref{bild}.

\begin{lemma} \label{lem:comp-expl}
For every $\gamma\in\R$ we have
\begin{equation}\label{bild-e}
\rg \bigl(R(1+i\gamma,M_{-1})b\bigr) \subset D(c_L).
\end{equation}
Moreover, $m_{13}(\cd):= c_L R(1+i\,\cd,M_{-1})b \in \L^{\infty}\bigl(\R,\sL(U,Y)\bigr)=\L^{\infty} \bigl(\R,\sL(\C^2,\C)\bigr)$.
\end{lemma}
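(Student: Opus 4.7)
The approach is to exploit the diagonal structure on $\ell^2$ provided by $J$ and reduce both claims to explicit scalar series. Since $M$ is multiplication by $\alpha=(-n^2)_{n\in\N}$, the resolvent $R(1+i\gamma,M_{-1})$ acts on $X_{-1}^M$ componentwise by multiplication with $1/(1+i\gamma+n^2)$. Using the formulas \eqref{eq:def-b} for $b$ and \eqref{eq:def-c} for $c$, for $u=(u_0,u_1)\in U=\C^2$ a direct computation yields
\[
a := R(1+i\gamma,M_{-1})bu = \biggl(\frac{\sqrt{w_n/\pi}\,\bigl((-1)^n u_1-u_0\bigr)}{1+i\gamma+n^2}\biggr)_{n\in\N}\in \ell^2,
\]
with the uniform-in-$\gamma$ estimate $|a_n|\le \sqrt{2/\pi}\,(|u_0|+|u_1|)/(1+n^2)$. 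Combined with $|e_n(\pi/2)|\le \sqrt{2/\pi}$, this makes $\sum_n|e_n(\pi/2)a_n|$ convergent and uniformly bounded in $\gamma$.

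To upgrade this to the inclusion $a \in D(c_L)$, I would argue by dominated convergence. Denoting by $(T(s))_{s\ge 0}$ the semigroup generated by $M$, so that $(T(s)a)_n=e^{-n^2 s}a_n$, a termwise integration gives
\[
c\,\frac{1}{t}\int_0^t T(s)a\ud s = \sum_{n\in\N}e_n\bigl(\tfrac\pi2\bigr)\,a_n\cdot\phi_n(t),\qquad t>0,
\]
where $\phi_0(t):=1$ and $\phi_n(t):=(1-e^{-tn^2})/(tn^2)\in(0,1]$ for $n\ge 1$. Since $\phi_n(t)\to 1$ as $t\searrow 0$ for each $n$ and the $n$-th summand is dominated in modulus by the summable majorant $|e_n(\pi/2)a_n|$, we can interchange limit and summation. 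After observing that $e_n(\pi/2)=0$ for odd $n$, this gives $a\in D(c_L)$ together with
\[
c_L R(1+i\gamma,M_{-1})bu \;=\; \sum_{n\in\N}e_n(\tfrac\pi2)\,a_n \;=\; \frac{u_1-u_0}{\pi}\sum_{k\ge 0}\frac{(-1)^k w_{2k}}{1+i\gamma+4k^2}.
\]

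For the $\L^\infty$-bound, since $|1+i\gamma+4k^2|\ge 1+4k^2$ uniformly in $\gamma\in\R$, the closed form above yields
\[
\bigl|c_L R(1+i\gamma,M_{-1})bu\bigr|\le \frac{|u_1-u_0|}{\pi}\biggl(1+\sum_{k\ge 1}\frac{2}{1+4k^2}\biggr)\le C\,\|u\|_U
\]
with $C>0$ independent of $\gamma$ and $u$, whence $m_{13}\in\L^\infty\bigl(\R,\sL(\C^2,\C)\bigr)$. The only non-routine point is the dominated-convergence step identifying the Lebesgue extension; its success hinges on the decay $a_n=O(1/n^2)$, which is precisely what supplies the summable majorant needed to pass the limit inside the infinite sum. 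Once this identification is in place, both assertions of the lemma become immediate consequences of the explicit formula.
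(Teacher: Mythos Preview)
Your proof is correct and follows essentially the same route as the paper: both exploit the diagonal structure to compute $R(1+i\gamma,M_{-1})bu$ componentwise, bound $|e_n(\tfrac\pi2)\,a_n|$ by a summable sequence independent of $\gamma$, and read off both the compatibility inclusion and the uniform bound from absolute convergence of the resulting series. The only difference is that where the paper invokes \cite[Prop.~7.2]{lebext} to pass from absolute convergence of $\sum_n e_n(\tfrac\pi2)\,a_n$ to membership in $D(c_L)$, you supply this step directly via the dominated-convergence argument with the multipliers $\phi_n(t)=(1-e^{-tn^2})/(tn^2)$; this makes your argument self-contained and also yields the sharper closed form in which the odd-indexed terms drop out, at the cost of a few extra lines.
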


\begin{proof}
Let $u :=\binom{u_1}{u_2} \in U=\C^2$ and $\gamma\in\R$. Then it follows
\[
R(1+i\gamma,M_{-1})bu = \biggl(\frac{1}{1+n^2+i\gamma}\Bigl(-\sqrt{{\tfrac{\den}{\pi}}}\,u_1 + (-1)^n\sqrt{{\tfrac{\den}{\pi}}}\, u_2\Bigr)\biggr)_{n\in \N}=:(r_n)_{n\in\N}.
\]
Since
\[
\bigl|e_n\bigl(\tfrac{\pi}{2}\bigr)\cdot r_n\bigr| \leq \frac{4}{(1+n^2)\pi}\cdot\bigl(|u_1|+|u_2|\bigr)\qquad\text{for all } n \in \N,\,\gamma\in\R
\]
the series
\[
\sum_{n=0}^\infty e_n\bigl(\tfrac{\pi}{2}\bigr)\cdot r_n
\]
converges.  By \cite[Prop.~7.2]{lebext} this implies \eqref{bild-e} and
\[
\bigl|c_LR(1+i\gamma,M_{-1})bu \bigr|
\le \frac{4\sqrt{2}}{\pi}\sum_{n=0}^\infty\frac{1}{1+n^2}\cdot\|u\|_2
\qquad\text{for all }\gamma\in\R,\, u\in U.
\]
Since this implies that $m_{13}(\cd)$ is bounded the proof is complete.
\end{proof}

Next we verify the $2$-admissibility of the operators $c$ and $b$. To this end we denote by $\St$ the semigroup generated by $M-1$.

\begin{proposition}
The observation operator $c$  is $2$-admissible with respect to $M-1$.
\end{proposition}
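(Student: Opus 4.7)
The plan is to exploit the fact that $M-1$ is self-adjoint and diagonal with eigenvalues $-1-n^2$ and the same orthonormal basis as used to define $c$. So the semigroup $\St$ generated by $M-1$ acts on $z=(a_n)_{n\in\N}\in\ell^2$ by $S(t)z=\bigl(e^{-(1+n^2)t}a_n\bigr)_{n\in\N}$. Combined with the explicit form \eqref{eq:def-c} of $c$, this gives for $z\in D(M)$
\[
cS(t)z=\sum_{n\in\N}e_n\bigl(\tfrac{\pi}{2}\bigr)\,e^{-(1+n^2)t}\,a_n.
\]
The absolute convergence of the right-hand side for $t>0$ is clear from the super-exponential decay of $e^{-(1+n^2)t}$, so pointwise manipulations in the sum are justified.

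Next I would apply the Cauchy--Schwarz inequality in the index $n$, pairing $e_n\bigl(\tfrac{\pi}{2}\bigr)e^{-(1+n^2)t}$ with $a_n$, to get
\[
|cS(t)z|^2\le \|z\|_{\ell^2}^2\cdot\sum_{n\in\N}\bigl|e_n\bigl(\tfrac{\pi}{2}\bigr)\bigr|^2 e^{-2(1+n^2)t}.
\]
Since $\bigl|e_n\bigl(\tfrac{\pi}{2}\bigr)\bigr|^2\le\tfrac{2}{\pi}$ for all $n\in\N$ (in fact it is zero for odd $n$), I can integrate over $[0,t_0]$ and interchange sum and integral by Tonelli's theorem to obtain
\[
\int_0^{t_0}|cS(t)z|^2\ud t
\le\|z\|_{\ell^2}^2\sum_{n\in\N}\bigl|e_n\bigl(\tfrac{\pi}{2}\bigr)\bigr|^2\cdot\frac{1-e^{-2(1+n^2)t_0}}{2(1+n^2)}
\le\frac{1}{\pi}\Bigl(\sum_{n\in\N}\frac{1}{1+n^2}\Bigr)\|z\|_{\ell^2}^2.
\]

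Since $\sum_{n\in\N}(1+n^2)^{-1}<\infty$, setting $M_c:=\tfrac{1}{\pi}\sum_{n\in\N}(1+n^2)^{-1}$ yields the required admissibility estimate for every $t_0>0$ (in fact the calculation shows $c$ is even \emph{infinite-time} $2$-admissible). There is no real obstacle here: the diagonal structure of $M$ in the chosen basis reduces admissibility to summing a convergent series, the uniform bound $\bigl|e_n(\tfrac\pi2)\bigr|\le\sqrt{2/\pi}$ absorbs the dependence on $n$ in the observation, and the shift by $1$ (i.e.\ working with $M-1$ rather than $M$) is exactly what produces the $1+n^2$ in the denominator and thereby the convergent tail.
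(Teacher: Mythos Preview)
Your proof is correct and follows essentially the same approach as the paper: express $cS(t)z$ via the diagonal action of the semigroup, apply Cauchy--Schwarz in $n$, bound $|e_n(\tfrac\pi2)|^2\le\tfrac2\pi$, integrate in $t$ and swap sum and integral to arrive at the constant $\tfrac1\pi\sum_{n\ge0}(1+n^2)^{-1}$. The only difference is that you are slightly more explicit about intermediate steps (keeping the factor $1-e^{-2(1+n^2)t_0}$ before bounding it, invoking Tonelli), but the argument is the same.
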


\begin{proof} Let $t_0>0$ and $z=(z_n)_{n\in\N}\in D(M)$. Then by the Cauchy--Schwarz inequality we obtain
\begin{align*}
\int_0^{t_0}\bigl|c\,S(s)z\bigr|^2\ud s
&=\int_0^{t_0}\biggl|\sum_{n=0}^\infty e_n(\tfrac\pi2)\, e^{-(1+n^2)s}z_n\biggr|^2\ud s\\
&\le\frac2\pi\sum_{n=0}^{+\infty}\int_0^{+\infty}e^{-2(1+n^2)s}\ud s\cdot \sum_{n=0}^{+\infty}|z_n|^2\\
&\le\frac1\pi\sum_{n=0}^{+\infty}\frac1{1+n^2}\cdot\|z\|_{\ell^2}^2,
\end{align*}
hence by definition $c$ is an admissible observation operator.
\end{proof}

\begin{proposition}
The control operator $b = (b_1,b_2)$  is $2$-admissible with respect to $M-1$.
\end{proposition}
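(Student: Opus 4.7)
My plan is to verify the equivalent characterization of $2$-admissibility recalled in Remark~\ref{rem-admiss-B}, namely to produce $t_0>0$ and $M\ge 0$ with
\[
\Bigl\|\int_0^{t_0} S_{-1}(t_0-r)\,b\,u(r)\ud r\Bigr\|_{\ell^2} \le M\,\|u\|_{\L^2([0,\infty),U)}
\qquad\text{for all } u \in \W^{1,2}\bigl([0,\infty),U\bigr).
\]
The key advantage in this example is that $M-1$ is a multiplication operator on $\ell^2$, so the rescaled semigroup $\St$ acts diagonally on the orthonormal basis, namely $\bigl(S(t)z\bigr)_n = e^{-(1+n^2)t}z_n$, and this formula extends componentwise to $X_{-1}^M$ (with values in the weighted sequence space described after \eqref{eq:def-b}).

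Using the explicit form \eqref{eq:def-b} of $b$, the $n$-th coordinate of $\int_0^{t_0} S_{-1}(t_0-r) b u(r)\ud r$ equals
\[
\sqrt{\tfrac{w_n}{\pi}}\,\int_0^{t_0} e^{-(1+n^2)(t_0-r)}\,\bigl(-u_1(r)+(-1)^n u_2(r)\bigr)\ud r,
\]
where $u(r)=\bigl(u_1(r),u_2(r)\bigr)^t\in U=\C^2$. I would then apply the Cauchy--Schwarz inequality in the time variable, combined with the elementary bounds $\int_0^{t_0} e^{-2(1+n^2)(t_0-r)}\ud r\le\frac{1}{2(1+n^2)}$ and $\bigl|-u_1(r)+(-1)^n u_2(r)\bigr|^2\le 2\|u(r)\|_U^2$. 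This shows that the modulus squared of the $n$-th coordinate is bounded by $\frac{w_n}{\pi(1+n^2)}\,\|u\|_{\L^2([0,\infty),U)}^2$.

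To conclude, I would sum over $n\in\N$ and use $w_n\le 2$ together with $\sum_{n=0}^{\infty}(1+n^2)^{-1}<+\infty$; this yields the required estimate with $M^2=\frac{2}{\pi}\sum_{n}(1+n^2)^{-1}$, uniformly in $t_0>0$. Since no step requires smallness or largeness of $t_0$, any $t_0>0$ works. I do not foresee any genuine obstacle: the diagonal form of the semigroup combined with the uniform boundedness of the sequence $w_n$ and the summability of $(1+n^2)^{-1}$ carries the proof entirely.
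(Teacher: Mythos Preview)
Your proof is correct and follows essentially the same approach as the paper: both arguments work componentwise using the diagonal form of the semigroup, apply Cauchy--Schwarz (the paper phrases this as Young's inequality, which in the $\L^2$--$\L^2$--$\L^\infty$ case is the same thing) to each coordinate, and conclude via the summability of $(1+n^2)^{-1}$. The only cosmetic difference is that the paper splits $b=(b_1,b_2)$ and treats each column separately with scalar input, whereas you handle the full $\C^2$-valued input at once via $|-u_1+(-1)^n u_2|^2\le 2\|u\|_U^2$; the resulting constants differ only by a factor.
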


\begin{proof}
Clearly $b$ is $2$-admissible if and only if $b_1,\,b_2:\C\to X_{-1}$ are both $2$-admissible. Let $t_0>0$ and $u\in\L^2[0,+\infty)$. Then by Young's inequality (cf. \cite[Prop.~1.3.5.(a)]{AB}) we obtain for $i=1,2$
\begin{align*}
\left\|\int_0^{t_0}S_{-1}(t_0-r)b_iu(r)\ud r\right\|_{\ell^2}^2
&\le\frac2\pi\sum_{n=0}^{+\infty}\biggl(\int_0^{t_0}e^{-(1+n^2)(t_0-r)}\bigl|u(r)\bigr|\ud r\biggr)^2\\
&\le\frac2\pi\sum_{n=0}^{+\infty}\biggl(\int_0^{+\infty}e^{-2(1+n^2)r}\ud r\biggr)^2\cdot\biggr(\int_0^{+\infty}\bigl|u(r)\bigr|^2\ud r\biggr)^2\\
&=\frac1\pi\sum_{n=0}^{+\infty}\frac1{1+n^2}\cdot\|u\|_{\L^2[0,+\infty)}^2,
\end{align*}
hence by definition $b_i$ is an admissible control operator.
\end{proof}

\begin{remark} For multiplication semigroups and finite dimensional observation/control spaces there exists a characterization for the admissability of an observation/control operator via a \emph{Carleson measure criteria}.  For the details we refer to \cite[Thm.~5.3.2]{Weiss} and \cite[Cor.~2.5]{rusho}, \cite[Thm.~1.2]{diagon}, respectively.

\end{remark}

Finally, from Lemmas~\ref{m13}, \ref{multhilbert} and \ref{lem:comp-expl} we obtain the following.

\begin{corollary}
The pair $(b,c)$ is $2$-admissible.
\end{corollary}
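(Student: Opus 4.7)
The plan is to simply chain together the three results the author explicitly cites (Lemma~\ref{m13}, Lemma~\ref{multhilbert} and Lemma~\ref{lem:comp-expl}) together with the already-established $2$-admissibility of $b$ and $c$. Since the hard work has been done upstream, this corollary is essentially a one-line application.

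First I would recall the setup: we are working with the rescaled generator $M-1$ (exponentially stable, as needed by Assumption~\ref{assum-reg}.(i)), and the control and observation operators $b$, $c$ from \eqref{eq:def-b} and \eqref{eq:def-c} have been shown in the preceding two propositions to be $2$-admissible with respect to $M-1$. Moreover, Lemma~\ref{lem:comp-expl} verified both the compatibility condition \eqref{bild-e} and that the symbol
\[
m_{13}(\gamma) = c_L\, R(1+i\gamma, M_{-1})\, b
\]
lies in $\L^{\infty}\bigl(\R,\sL(\C^2,\C)\bigr)$.

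Next, since the control space $U=\C^2$ and the observation space $Y=\C$ are finite-dimensional Hilbert spaces, Lemma~\ref{multhilbert} applies directly to $m_{13}$: every bounded $\sL(\C^2,\C)$-valued function on $\R$ is an $\L^2$-Fourier multiplier. Thus $m_{13}$ is a bounded $\L^2$-Fourier multiplier.

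Finally, I invoke Proposition~\ref{m13} (equivalently, the multiplier characterization of pair-admissibility, applied to the rescaled system $\Sigma(M-1,b,c,0)$ as justified by Theorem~\ref{pert} with $\lambda_0=1$): since $b$ and $c$ are $2$-admissible with respect to $M-1$ and $m_{13}$ is a bounded Fourier multiplier, the pair $(b,c)$ is $2$-admissible. There is no real obstacle here; the only point that needs a moment's care is the bookkeeping with the rescaling, namely observing that $m_{13}$ as defined in Lemma~\ref{lem:comp-expl} is precisely $m_{13}^{\lambda_0}$ from Theorem~\ref{pert} with $\lambda_0=1$, so that admissibility with respect to $M-1$ is the correct notion to read off from the Fourier multiplier criterion.
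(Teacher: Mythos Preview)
Your proposal is correct and follows exactly the argument the paper intends: the paper itself gives no separate proof but simply cites Lemmas~\ref{m13}, \ref{multhilbert} and \ref{lem:comp-expl}, and your write-up is precisely the expansion of that citation chain, including the care with the rescaling $M\mapsto M-1$ via Theorem~\ref{pert}.
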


Summing up we obtain by Theorem~\ref{pert} the main result of this section.

\begin{corollary}
The system $\Sigma(M,b,c,0)$, hence also the Heat Equation~\eqref{eq:1}, is $2$-well-posed.
\end{corollary}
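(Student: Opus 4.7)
The plan is to assemble the pieces already established in this section and invoke Theorem~\ref{pert} in its Hilbert space flavour. The key observation is that the rescaled generator $M-1$ satisfies $\omega_0(M-1)<0$, so Assumption~\ref{assum-reg}.(i) holds for the shifted system $\Sigma(M-1,b,c,0)$, and by Lemma~\ref{lem:comp-expl} the compatibility condition \eqref{bild} also holds (for $\lambda=1\in\rho(M)$, hence by the resolvent equation for every $\lambda\in\rho(M)$, and by Remark~\ref{rem:not-stable} equivalently for $M-1$). Thus $\Sigma(M-1,b,c,0)$ falls within the framework of Section~\ref{A-csu}.

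Next I would collect the three admissibility statements. The two preceding propositions show that $c$ and $b$ are $2$-admissible observation and control operators with respect to $M-1$. The corollary just above shows that the pair $(b,c)$ is $2$-admissible, where one uses that $U=\C^2$ and $Y=\C$ are Hilbert spaces, so by Lemma~\ref{multhilbert} every bounded $\sL(U,Y)$-valued symbol is automatically an $\L^2$-Fourier multiplier; combined with the boundedness of $m_{13}$ from Lemma~\ref{lem:comp-expl} and Proposition~\ref{m13}, the pair admissibility follows.

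With all hypotheses in place, Theorem~\ref{pert} (or equivalently Corollary~\ref{hilbert}) applies: the implication (c)$\Rightarrow$(a) gives that the operator matrix $\A$ associated to $\Sigma(M,b,c,0)$ generates a $C_0$-semigroup on the extended state space $\sX^2=\L^2((-\infty,0],\C)\times\ell^2\times\L^2([0,+\infty),\C^2)$. By the very definition this is $2$-well-posedness. Since the isometric isomorphism $J:X\to\ell^2$ extends coordinatewise to an isomorphism between the extended state spaces of $\Sigma(A,B,C,0)$ and $\Sigma(M,b,c,0)$ intertwining the corresponding operator matrices, the Heat Equation~\eqref{eq:1} is $2$-well-posed as well.

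There is no real obstacle here: all the technical work has been done in the preceding lemmas and propositions, and the corollary is essentially a bookkeeping step. The only point requiring a small remark is the rescaling $M\mapsto M-1$, which is why we cite Theorem~\ref{pert} rather than Theorem~\ref{char-gen-sA-admiss} directly, and the fact that admissibility of $b$ and $c$ is insensitive to the shift (as noted in part (d)$\iff$(c) of Theorem~\ref{pert}).
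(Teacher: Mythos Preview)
Your proposal is correct and follows exactly the approach of the paper, which simply introduces the corollary with the phrase ``Summing up we obtain by Theorem~\ref{pert} the main result of this section'' and gives no further proof. You have spelled out the bookkeeping (rescaling, compatibility, the three admissibility ingredients, and the transfer via the isometry $J$) that the paper leaves implicit, but the logical route is identical.
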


\appendix
\section{}
We used several times the following simple result which relates the existence of a bounded extension of a densely defined operator to a range condition. It allowed us to characterize admissibility by a range condition or, alternatively, by a boundedness condition on a dense set.

\begin{lemma}\label{lem-app}
Let  $V$, $W$, $Z$ be arbitrary Banach spaces, $D\subset V$ be a dense subspace and
assume that $W\inc Z$ is continuously embedded. Then for linear operators
\begin{align*}
\tilde Q:{}&\phantom{D\subset}\;V\to Z, \qquad \tilde Q \in \sL(V,Z), \\
Q:{}&D\subset V\to W,
\end{align*}
the following assertions are equivalent.

\begin{enumerate}
\item[(a)] There exists $M\ge0$ such that
\[
\|Qv\|_W\le M\cdot\|v\|_V\quad\text{for all }v\in D
\]
and $\tilde Q$ is the unique bounded extension of $Q$.
\item[(b)] $Q=\tilde Q_{|D}$ and $\rg(\tilde Q)\subset W$.
\end{enumerate}
In this case, $\tilde Q\in\sL(V,W)$.
\end{lemma}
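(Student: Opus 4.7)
The plan is to treat the two implications separately, with the continuous embedding $W\inc Z$ playing the structural role of allowing us to compare two topologies on the same vector content.

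For (a)$\Rightarrow$(b), I would first use the bound $\|Qv\|_W\le M\|v\|_V$ on the dense subspace $D$ together with the completeness of $W$ to extend $Q$ by continuity to a bounded operator $\bar Q\in\sL(V,W)$. Composing $\bar Q$ with the continuous embedding $\iota\colon W\hookrightarrow Z$ yields a bounded operator $\iota\circ\bar Q\in\sL(V,Z)$ which agrees with $Q$ on $D$. Since $\tilde Q$ is assumed to be the (unique) bounded extension of $Q$ to $Z$, we must have $\tilde Q=\iota\circ\bar Q$, whence $\rg(\tilde Q)\subset W$ (identifying $W$ with its image in $Z$) and $Q=\tilde Q_{|D}$.

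For (b)$\Rightarrow$(a), the key step is to verify that $\tilde Q$, now viewed as a linear map $V\to W$, is closed, and then invoke the closed graph theorem. So I would take a sequence $(v_n)\subset V$ with $v_n\to v$ in $V$ and $\tilde Q v_n\to w$ in $W$. On one hand, the continuity of the embedding $W\inc Z$ gives $\tilde Q v_n\to w$ in $Z$; on the other hand, $\tilde Q\in\sL(V,Z)$ yields $\tilde Q v_n\to\tilde Q v$ in $Z$. By uniqueness of limits in $Z$ we deduce $\tilde Q v=w\in W$, which shows closedness. The closed graph theorem therefore gives $\tilde Q\in\sL(V,W)$, so in particular the boundedness inequality $\|Qv\|_W=\|\tilde Q v\|_W\le\|\tilde Q\|_{\sL(V,W)}\|v\|_V$ holds on $D$. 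Uniqueness of the bounded extension (both in $Z$ and automatically in $W$) is a standard consequence of the density of $D$ in $V$.

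I do not expect a genuine obstacle here; the only point requiring some care is the direction (b)$\Rightarrow$(a), where one has to remember that the hypothesis $\rg(\tilde Q)\subset W$ is what makes $\tilde Q$ a legitimate $W$-valued operator so that the closed graph theorem can be applied, and that the continuous embedding $W\inc Z$ is precisely what is needed to transport convergence between the two target spaces.
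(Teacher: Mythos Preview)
Your proof is correct and follows essentially the same route as the paper: both directions are handled exactly as you describe, extending $Q$ by density for (a)$\Rightarrow$(b) and invoking the closed graph theorem for (b)$\Rightarrow$(a). The paper's version is simply terser, asserting the applicability of the closed graph theorem without spelling out the closedness argument that you supply.
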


\begin{proof}
(a) $\Rightarrow$ (b). It is clear that $Q$ has a unique bounded extension $\bar Q \in \sL (V,W)$. Since $\bar Q = \tilde Q$ it follows $\rg (\tilde Q) \subset W$.

(b) $\Rightarrow$ (a). Since $\rg(\tilde Q)\subset W$  the closed graph theorem implies that $\tilde Q \in \sL (V,W)$.	
Hence there exists a constant $M \ge 0$ such that for all $ v \in D$
\[\| Q v \|_W = \| \tilde Q v \|_W \leq M  \| v \|_V.\]
Moreover, $\tilde Q$ is the unique bounded extension of $Q$.
As claimed in this case $\tilde Q \in \sL (V,W)$.
\end{proof}

%%%%%%%%%%%%%%%%%%%%%%%%%%%%%%%%%%%%%%%%%%%%%%%%%%%%%%%%%%%

\emph{Miriam Bombieri\\
Arbeitsbereich Funktionalanalysis \\
Mathematisches Institut\\
Auf der Morgenstelle 10 \\
D-72076 T\"{u}bingen\\
mibo@fa.uni-tuebingen.de
}

\emph{Klaus Engel\\
Università degli Studi dell'Aquila \\
Dipartimento di Ingegneria e Scienze dell'Informazione e Matematica\\
Via Vetoio (Coppito 1)\\
67100 L'Aquila\\
engel@ing.univaq.it
}

\end{document}